\DeclareMathAlphabet\EuRoman{U}{eur}{m}{n}
\SetMathAlphabet\EuRoman{bold}{U}{eur}{b}{n}
\crefname{assumption}{Assumption}{Assumptions}
\crefname{claim}{Claim}{Claims}
\let\reftagform@=\tagform@
\def\tagform@#1{\maketag@@@{\ignorespaces\textcolor{gray}{(#1)}\unskip\@@italiccorr}}
\renewcommand{\eqref}[1]{\textup{\reftagform@{\ref{#1}}}}
\definecolor{WowColor}{rgb}{.75,0,.75}
\definecolor{SubtleColor}{rgb}{0,0,.50}
\newcounter{margincounter}
\declaretheorem[style=plain,numberwithin=section,name=Theorem]{theorem}
\declaretheorem[style=plain,sibling=theorem,name=Lemma]{lemma}
\declaretheorem[style=plain,sibling=theorem,name=Claim]{claim}
\declaretheorem[style=definition,sibling=theorem,name=Definition]{definition}
\declaretheorem[style=definition,name=Assumption]{assumption}
\newenvironment{condition}[1]
 {\conditionINNER}
 {\endconditionINNER}
\declaretheoremstyle[
    spaceabove=-6pt,
    spacebelow=6pt,
    headfont=\normalfont\bfseries,
    bodyfont = \normalfont,
    postheadspace=1em,
    qed=$\square$,
    headpunct={{}}]{myproofstyle}
\numberwithin{equation}{section}
\numberwithin{theorem}{section}
\def\[#1\]{\begin{align}#1\end{align}}
\def\*[#1\]{\begin{align*}#1\end{align*}}
\newcommand{\Reals}{\mathbb{R}}
\newcommand{\Nats}{\mathbb{N}}
\newcommand{\PosReals}{\Reals_{> 0}}
\newcommand{\dee}{\mathrm{d}}
\DeclareMathOperator*{\newlim}{\mathrm{lim}\vphantom{\mathrm{infsup}}}
\DeclareMathOperator*{\newmin}{\mathrm{min}\vphantom{\mathrm{infsup}}}
\DeclareMathOperator*{\newmax}{\mathrm{max}\vphantom{\mathrm{infsup}}}
\DeclareMathOperator*{\newinf}{\mathrm{inf}\vphantom{\mathrm{infsup}}}
\DeclareMathOperator*{\newsup}{\mathrm{sup}\vphantom{\mathrm{infsup}}}
\renewcommand{\lim}{\newlim}
\renewcommand{\min}{\newmin}
\renewcommand{\max}{\newmax}
\renewcommand{\inf}{\newinf}
\renewcommand{\sup}{\newsup}
\newcommand{\ceiling}[1]{\lceil #1 \rceil}
\newcommand{\cF}{\mathcal F}
\newcommand{\BorelSets}[1]{\mathcal{B}[#1]}
\newcommand{\NSE}[1]{{^{*}#1}}
\newcommand{\ST}{\mathsf{st}}
\newcommand{\HReals}{\NSE{\Reals}}
\newcommand{\NS}[1]{\mathrm{NS}(#1)}
\newcommand{\cA}{\mathcal{A}}
\newcommand{\cC}{\mathcal{C}}
\newtheorem{open problem}{Open Problem}
\newcommand{\Loeb}[1]{\overline{#1}}
\newcommand{\interior}[1]{%
  {\kern0pt#1}^{\mathrm{o}}%
}
\newcommand{\refproof}[1]{See \cref{#1} for \IfSubStr{#1}{,}{proofs}{a proof}. }
\newif\iflongform
\providecommand*{\toclevel@definition}{0}
\providecommand*{\toclevel@theorem}{0}
\providecommand*{\toclevel@lemma}{0}
\title[Nonstandard Representation of the Dirichlet Form]
{
Nonstandard Representation of the Dirichlet Form
}
\newcommand{\cB}{\mathscr{B}}
\newcommand{\topology}{\mathcal{T}}
\newcommand{\Diric}[3]{\mathscr{E}_{#1}(#2,#3)}
\newcommand{\hyDiric}[3]{\mathscr{F}_{#1}(#2,#3)}
\newcommand{\cR}{\mathscr{R}}
\newcommand{\cP}{\mathscr{P}}
\newcommand{\indicate}{\mathbbm{1}}
\newcommand{\hycongest}{\mathbb{B}}
\newcommand{\alt}[1]{\tilde{#1}}
\newcommand{\PathSpace}{\digamma}
\newcommand{\hyper}[1]{\mathcal{#1}}
\newcommand{\HPathSpace}{\Upsilon}
\newcommand{\goodset}{\mathcal{C}}
\newcommand{\biggoodset}{\mathcal{O}}
\newcommand{\proj}{\mathbbm{P}}
\newcommand{\allpath}{\Theta}
\newcommand{\Diagonal}{\mathbb{D}}
\newcommand{\metric}{W}
\begin{document}

\author[R.~M.~Anderson]{Robert M.~Anderson}
\address{University of California, Berkeley, Department of Economics}

\author[H.~Duanmu]{Haosui Duanmu}
\address{University of California, Berkeley, Department of Economics}

\author[A.~Smith]{Aaron Smith}
\address{University of Ottawa, Department of Mathematics and Statistics}

\maketitle

\begin{abstract}
The Dirichlet form is a generalization of the Laplacian, heavily used in the study of many diffusion-like processes. In this paper we present a \textit{nonstandard representation theorem} for the Dirichlet form, showing that the usual Dirichlet form can be well-approximated by a \textit{hyperfinite} sum. One of the main motivations for such a result is to provide a tool for directly translating results about Dirichlet forms on finite or countable state spaces to results on more general state spaces, without having to translate the details of the proofs. As an application, we prove a generalization of a well-known comparison theorem for Markov chains on finite state spaces, and also relate our results to previous generalization attempts. 
\end{abstract}

%

\singlespace

\section{Introduction}

The Dirichlet form, introduced in \citet{Beurling58}, can be used in place of the usual Laplacian in situations where the ``usual" Laplacian may not be convenient or make sense at all. Among other applications, it is an important tool for defining diffusive processes on various complicated spaces (see \textit{e.g.} an early paper on fractals \citet{kusuoka1989dirichlet} and an introductory paper on infinite-dimensional processes \citet{schmuland1999dirichlet}), potential theory (see \textit{e.g.} \citet{albeverio2003lectures} for the application of Dirichlet forms and \citet{doob2012classical} for the classical theory) and for comparing processes (see versions of such results in \citet{davies_1989,LPW09}). See \textit{e.g.} \citet{ma2012introduction,fukushima1996dirichlet} for broad introductions to Dirichlet forms and their uses.

The main result of this paper, \cref{hydstd}, develops a \textit{nonstandard representation theorem} for Dirichlet forms. We give a very informal summary here. Recall that the \textit{Dirichlet form} of a transition kernel $g$ with stationary measure $\pi$ on state space $X$ can be applied to functions $f \in L^{2}(\pi)$ via the formula:

\[
\Diric{g}{f}{f}=\frac{1}{2}\int_{x\in X}\int_{y\in X}[f(x)-f(y)]^{2}g(x,1,\dee y)\pi(\dee x).
\]

 \cref{hydstd} says that, under appropriate conditions, this Dirichlet form can be well-approximated by an appropriate \textit{hyperfinite sum}. With notation to be fixed later in the paper, the main conclusion of the theorem is written

\[ \label{IneqHyperfiniteIntro}
\Diric{g}{f}{f}\approx \frac{1}{2}\sum_{s,t\in S_{X}}[F(s)-F(t)]^{2}H_{s}(\{t\})\Pi(\{s\}),
\]

where $S_{X}$ is a set meant to approximate $X$, $F$ is a function meant to approximate $f$,  $H_{s}$ and $\pi$ are measures meant to approximate $g(s,1,\cdot)$ and $\pi$ respectively, and the symbol ``$\approx$", which is defined precisely in \cref{SecIntroNSA}, means two quantities equal up to an infinitesimal.

The immediate motivation for a result such as \eqref{IneqHyperfiniteIntro} is that the \textit{hyperfinite} sum over $S_{X}$ behaves a great deal like a more-familiar finite sum. This allows one to \textit{directly} translate certain results about processes on discrete spaces to results about processes on more general spaces, without worrying about translating each step in the associated proof. Such direct translation can lead to substantially simpler and shorter proofs, and sometimes allows one to avoid assumptions, \textit{e.g.} by allowing one to bypass differentiability assumptions that would be needed in translating the \textit{proof steps} but which are not needed to translate the \textit{theorem statement.}

As an illustration of \cref{hydstd} and how it may be applied, we give a simple translation of the well-known comparison theorem for Markov chains, first proved in \citet{diaconis1993comparison}. Previous generalizations of this result have been obtained by purely standard results (see \citet{yuen2000applications}). However, our version offers several important improvements - for example, we remove various differentiability conditions (which in fact often fail in practice) and are able to obtain substantially sharper estimates in some basic but important test cases. See Section \ref{sec1dim} for a brief application and discussion, and our forthcoming companion paper focused on these applications for more details.

\subsection{Nonstandard Analysis and Transferring Program}

We view the main contribution of this paper as the development of a nonstandard version of the classical Dirichlet form. This paper is a small part of an ongoing effort to provide nonstandard analogues to a variety of important objects in probability theory and statistics: \citep{Markovpaper}, \citep{anderson2018mixhit}, \citep{anderson2019mixavg}, \citep{anderson2019gibbs}, \citep{nscredible} and \citep{nsbayes}. The main motivation is the rough observation that many interesting theorems in probability have the following properties:

\begin{itemize}
\item The theorem is initially proved on a discrete (or finite-dimensional) space, \textit{and}
\item The theorem \textit{statement} does not seem to rely heavily on the space being discrete or finite-dimensional, \textit{but}
\item Several steps in the most natural proof \textit{do} seem to rely heavily on the space being discrete or finite-dimensional.
\end{itemize}

For some examples, see \textit{e.g.} \citep{andersonisrael}, \citep{Keisler87}, \citep{Markovpaper}, \citep{anderson2018mixhit}, \citep{nscredible} and \citep{nsbayes}. For results that have this form, it is natural to try to directly translate the \textit{theorem statement} without needing to go through the details of translating \textit{the full proof.} This idea of translation is at the heart of nonstandard analysis, where it is formalized in the notion of ``transfer." This basic idea has let us and others make progress on several problems that otherwise appear difficult, including: 
\begin{itemize}
\item In \citet{Markovpaper}, we prove the Markov chain ergodic theorem for a large class of continuous time general Markov processes, generalizing the well-known Markov chain ergodic theorem for discrete Markov processes. 
    
\item In \citet{nsbayes},  we show that a decision procedure is extended admissible if and only if it has infinitesimal excess Bayes risk under a nonstandard prior distribution. This result holds under complete generality and is a generalization of existing complete class theorems.

\item In \citet{anderson2018mixhit}, we show that mixing time and hitting time are asymptotically equivalent for general Markov processes under moderate regularity condition, generalizing the same result for finite Markov processes in \citet{finitemixhit}. 
    
\item In \citet{nscredible}, we show that matching priors for specific families of credible sets exist on compact metric spaces, extending a result for finite spaces in \citet{muller2016coverage}.    
\end{itemize}

As discussed in the introduction, this paper includes another example of the power of this approach: we provide a translation of {\citet[][Thm.~13.23]{markovmix}} that is in many ways more powerful than the extension in previously published work \citet{yuen2000applications} and \citet{yuen2002generalization}.

\section{Introduction to Nonstandard Analysis} \label{SecIntroNSA}
We briefly introduce the setting and notation from nonstandard analysis.
For those who are not familiar with nonstandard analysis, \citet{Markovpaper} and \citet{nsbayes} provide introduction tailored to statisticians and probabilists. \citet{NSAA97,NDV,NAW} provide thorough introductions.

We use $\NSE{}$ to denote the nonstandard extension map taking elements, sets, functions, relations, etc., to their nonstandard counterparts.
In particular, $\HReals$ and $\NSE{\Nats}$ denote the nonstandard extensions of the reals and natural numbers, respectively.
An element $r\in \HReals$ is \emph{infinite} if $|r|>n$ for every $n\in \Nats$ and is \emph{finite} otherwise. An element $r \in \HReals$ with $r > 0$ is \textit{infinitesimal} if $r^{-1}$ is infinite. For $r,s \in \HReals$, we use the notation $r \approx s$ as shorthand for the statement ``$|r-s|$ is infinitesimal," and similarly we use use $r \gtrapprox s$ as shorthand for the statement ``either $r \geq s$ or $r \approx s$."

Given a topological space $(X,\topology)$,
the monad of a point $x\in X$ is the set $\bigcap_{ U\in \topology \, : \, x \in U}\NSE{U}$.
An element $x\in \NSE{X}$ is \emph{near-standard} if it is in the monad of some $y\in X$.
We say $y$ is the standard part of $x$ and write $y=\ST(x)$. Note that such $y$ is unique.
We use $\NS{\NSE{X}}$ to denote the collection of near-standard elements of $\NSE{X}$
and we say $\NS{\NSE{X}}$ is the \emph{near-standard part} of $\NSE{X}$.
The standard part map $\ST$ is a function from $\NS{\NSE{X}}$ to $X$, taking near-standard elements to their standard parts.
In both cases, the notation elides the underlying space $Y$ and the topology $T$,
because the space and topology will always be clear from context.
For a metric space $(X,d)$, two elements $x,y\in \NSE{X}$ are \emph{infinitely close} if $\NSE{d}(x,y)\approx 0$.
An element $x\in \NSE{X}$ is near-standard if and only if it is infinitely close to some $y\in X$.
An element $x\in \NSE{X}$ is finite if there exists $y\in X$ such that $\NSE{d}(x,y)<\infty$ and is infinite otherwise.

Let $X$ be a topological space endowed with Borel $\sigma$-algebra $\BorelSets X$. An internal probability measure $\mu$ on $(\NSE{X},\NSE{\BorelSets X})$
is an internal function from $\NSE{\BorelSets X}\to \NSE{[0,1]}$ such that
\begin{enumerate}
\item $\mu(\emptyset)=0$;
\item $\mu(\NSE{X})=1$; and
\item $\mu$ is hyperfinitely additive.
\end{enumerate}
The Loeb space of the internal probability space $(\NSE{X},\NSE{\BorelSets X}, \mu)$ is a countably additive probability space $(\NSE{X},\Loeb{\NSE{\BorelSets X}}, \Loeb{\mu})$ such that
\[
\Loeb{\NSE{\BorelSets X}}=\{A\subset \NSE{X}|(\forall \epsilon>0)(\exists A_i,A_o\in \NSE{\BorelSets X})(A_i\subset A\subset A_o\wedge \mu(A_o\setminus A_i)<\epsilon)\}
\]
and
\[
\Loeb{\mu}(A)=\sup\{\ST(\mu(A_i))|A_i\subset A,A_i\in \NSE{\BorelSets X}\}=\inf\{\ST(\mu(A_o))|A_o\supset A,A_o\in \NSE{\BorelSets X}\}.
\]

Every standard model is closely connected to its nonstandard extension via the \emph{transfer principle}, which asserts that a first order statement is true in the standard model is true if and only if it is true in the nonstandard model.
Finally, given a cardinal number $\kappa$, a nonstandard model is called $\kappa$-saturated if the following condition holds:
let $\cF$ be a family of internal sets, if $\cF$ has cardinality less than $\kappa$ and $\cF$ has the finite intersection property, then the total intersection of $\cF$ is non-empty. In this paper, we assume our nonstandard model is as saturated as we need (see \textit{e.g.} {\citep[][Thm.~1.7.3]{NSAA97}} for the existence of $\kappa$-saturated nonstandard models for any uncountable cardinal $\kappa$).

\section{Hyperfinite Markov Processes}\label{hyprocess}
We start this section by giving an overview of hyperfinite Markov processes developed in \citep{Markovpaper}, \citep{anderson2018mixhit} and \citep{anderson2019gibbs}.  
Intuitively, hyperfinite Markov processes behave like finite Markov processes but can be used to represent general Markov processes under moderate conditions. For the remainder of this paper, we assume that a probability space $X$ is always endowed with Borel $\sigma$-algebra $\BorelSets X$ unless otherwise mentioned.  

\begin{definition}\label{defnHM}
A general hyperfinite Markov chain on $\NSE{X}$ is characterized by the following four ingredients:

\begin{enumerate}

\item A hyperfinite state space $S\subset {^{*}X}$.

\item A hyperfinite time line $T=\NSE{\Nats}$.

\item A set $\{v_i:i\in S\}$ of non-negative hyperreals such that $\sum_{i\in S}v_i=1$.

\item A set $\{G_{ij}\}_{i,j\in S}$ consisting of non-negative hyperreals with $\sum_{j\in S}G_{ij}=1$ for each $i\in S$

\end{enumerate}
\end{definition} 

The state space $S$ naturally inherits the $^{*}$metric of ${^{*}X}$. For every $i,j\in S$, $G_{ij}$ refers to the internal probability of going from $i$ to $j$. The following theorem shows the existence of hyperfinite Markov process. 

\begin{theorem}[{\citep[][Thm.~7.2]{Markovpaper}}]\label{HMexist}
Given a non-empty hyperfinite state space $S\subset \NSE{X}$, $\{v_i\}_{i\in S}$ and $\{G_{ij}\}_{i,j\in S}$ as in \cref{defnHM}. Then there exists a $\NSE{\sigma}$-additive probability triple $(\Omega,\cA,P)$ with an internal stochastic process $\{X_t\}_{t\in T}$ defined on $(\Omega,\cA,P)$ such that
\[
P(X_0=i_0,X_{1}=i_{1},...X_{t}=i_t)=v_{i_{0}}G_{i_{0}i_{1}}...G_{i_{t-1}i_{t}}
\]
for all $t\in \NSE{\Nats}$ and $i_0,....i_{t}\in S$. 
\end{theorem}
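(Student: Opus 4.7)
The plan is to prove the theorem by a straightforward application of the transfer principle to the classical construction of a discrete-time Markov chain on a finite state space. In the standard universe, the following is a first-order statement that is provable (e.g.\ via Kolmogorov's extension theorem, or directly by constructing a product measure on cylinder sets of $S^{\Nats}$): for every finite set $S$, every probability vector $\{v_i\}_{i\in S}$, and every stochastic matrix $\{G_{ij}\}_{i,j\in S}$, there exist a $\sigma$-additive probability space $(\Omega,\cA,P)$ and a stochastic process $\{X_t\}_{t\in \Nats}$ on it whose finite-dimensional distributions are given by the standard product formula $P(X_0=i_0,\dots,X_t=i_t)=v_{i_0}G_{i_0i_1}\cdots G_{i_{t-1}i_t}$.

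The first step is to encode the above assertion as a bounded first-order sentence in the language of the superstructure over the base set on which the nonstandard model is built. This requires formulating ``finite state space,'' ``stochastic matrix,'' ``$\sigma$-additive probability measure,'' and ``stochastic process with prescribed joint distributions'' as definable predicates, which is standard. Once this is done, the transfer principle immediately yields the nonstandard version: for every hyperfinite $S\subset \NSE{X}$, every internal set $\{v_i\}_{i\in S}$ of nonnegative hyperreals summing to $1$, and every internal array $\{G_{ij}\}_{i,j\in S}$ with $\sum_{j\in S}G_{ij}=1$ for each $i$, there exist an internal $\NSE{\sigma}$-additive probability triple $(\Omega,\cA,P)$ and an internal process $\{X_t\}_{t\in \NSE{\Nats}}$ satisfying the transferred product formula. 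Since the formula displayed in the theorem is exactly the $\NSE{}$-transform of the standard product formula, the conclusion follows.

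Two small points that require attention, though neither is a genuine obstacle. First, one must check that the ``$\NSE{\sigma}$-additive probability triple'' appearing in the theorem means exactly the transfer of a $\sigma$-additive probability triple, i.e.\ an internal probability space on which $P$ is hyperfinitely additive over internal sequences of $\cA$-sets; this is the standard convention in \citep{Markovpaper} and \citep{NSAA97}. Second, one must ensure that the process $\{X_t\}_{t\in \NSE{\Nats}}$ is genuinely \emph{internal}, not merely defined on each standard $t$; this is automatic because the entire construction lives inside the nonstandard model and is produced by transfer, so the map $t\mapsto X_t$ is itself internal.

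The main (mild) obstacle is making the formalization in Step~1 precise enough to invoke transfer. In particular, the existential quantifier over $(\Omega,\cA,P)$ and $\{X_t\}$ must range over appropriately bounded sets in the superstructure so that the sentence is first-order; this is a routine but slightly tedious bookkeeping exercise, and once it is handled there are no further subtleties. A full proof can also be found in \citep{Markovpaper}.
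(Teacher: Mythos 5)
Your proposal is correct and takes essentially the same approach that the paper indicates: the paper cites the result from \citep{Markovpaper} and states only that ``the proof of \cref{HMexist} essentially follows from transferring the existence theorem for finite Markov processes,'' which is precisely the transfer argument you lay out. You have merely spelled out the routine bookkeeping (formalizing the existential statement in the superstructure and checking internality of the process) that the paper leaves implicit.
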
 

The proof of \cref{HMexist} essentially follows from transferring the existence theorem for finite Markov processes. 
For every $i\in S$ and every internal $A\subset S$, define $G_{i}(A)=\sum_{j\in A}G_{ij}$. 
For $n\in T$, define $G_{i}^{(n+1)}(A)=\sum_{j\in S}G_{j}^{(n)}(A)G_{ij}$.
It is easy to see that $G_{i}^{(n)}(\cdot)$ is an internal probability measure on $S$ for every $i\in S$ and $n\in T$. 
We call $\{G_{i}^{(n)}(\cdot)\}$ the $n$-step internal transition kernel of the underlying hyperfinite Markov process. 
Just like standard Markov process, a hyperfinite Markov process is characterized by its $1$-step internal transition kernel.

As in \citep{Markovpaper}, \citep{anderson2018mixhit} and \citep{anderson2019gibbs}, we shall construct hyperfinite Markov processes from standard Markov processes. Let $\{g(x,1,\cdot)\}_{x\in X}$ be the transition kernel of a Markov process on $X$. We now define the hyperfinite representation of the state space $X$. 

\begin{definition}\label{hyperapproxsp}
Let $(X,d)$ be a compact metric space. Let $\delta\in \NSE{\PosReals}$ be an infinitesimal. A $\delta$-hyperfinite representation of $X$ is a tuple $(S,\{B(s)\}_{s\in S})$ such that
\begin{enumerate}
\item $S$ is a hyperfinite subset of $\NSE{X}$.
\item $s\in B(s)\in \NSE{\BorelSets X}$ for every $s\in S$ and $\bigcup_{s\in S}B(s)=\NSE{X}$.
\item For every $s\in S$, the diameter of $B(s)$ is no greater than $\delta$.
\item $B(s_1)\cap B(s_2)=\emptyset$ for every $s_1\neq s_2\in S$.
\end{enumerate}
The set $S$ is called the \emph{base set} of the hyperfinite representation. For every $x\in \NSE{X}$, we use $s_{x}$ to denote the unique element in $S$ such that $x\in B(s_x)$.
\end{definition}

We say $(S,\{B(s)\}_{s\in S})$ is a \emph{hyperfinite representation} of $X$ if $(S,\{B(s)\}_{s\in S})$ is a $\delta$-hyperfinite representation of $X$ for some infinitesimal $\delta$. Hyperfinite representations always exist. 

\begin{theorem}[{\citep[][Theorem.~3.3]{anderson2018mixhit}}]\label{exhyper}
Let $X$ be a compact metric space. Then, for every positive infinitesimal $\delta$, there exists an $\delta$-hyperfinite representation $(S_{\delta},\{B(s)\}_{s\in S_{\delta}})$ of ${^{*}X}$.
\end{theorem}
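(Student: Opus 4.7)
The plan is to prove this as a straightforward application of the transfer principle to a standard compactness-based construction. First I will establish the analogous statement for positive \emph{standard} $\epsilon$: for every $\epsilon > 0$, there exist a finite set $S_\epsilon \subset X$ and Borel sets $\{B(s)\}_{s \in S_\epsilon}$ satisfying the four conditions of \cref{hyperapproxsp} (with $\delta$ replaced by $\epsilon$). This follows from compactness in the usual way: cover $X$ by open balls $U_1 = B(x_1, \epsilon/2), \ldots, U_n = B(x_n, \epsilon/2)$ at distinct centers, and disjointify them by setting $B(x_i) \defas U_i \setminus \bigcup_{j < i} U_j$, discarding any $i$ for which $B(x_i) = \emptyset$. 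Each $B(x_i)$ is a Borel subset of $U_i$, hence has diameter at most $\epsilon$, and the $B(x_i)$ partition $X$ with $x_i \in B(x_i)$.

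Next I would formalize the claim in a transferable form. The existence of such a finite partition can be written as a first-order statement quantified over $\epsilon > 0$, using the standard encoding of finite sequences: there exist $n \in \Nats$, a sequence $(s_1, \ldots, s_n) \in X^n$ of distinct points, and a sequence $(B_1, \ldots, B_n) \in \BorelSets{X}^n$ such that $s_i \in B_i$, $\mathrm{diam}(B_i) \le \epsilon$, the $B_i$ are pairwise disjoint, and $\bigcup_i B_i = X$. Call this property $\Phi(\epsilon)$; the previous paragraph gives $\Phi(\epsilon)$ for every standard $\epsilon > 0$.

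By the transfer principle, $\NSE{\Phi}(\delta)$ holds for every $\delta \in \NSE{\PosReals}$, including any fixed positive infinitesimal $\delta$. Unpacking, this gives a hyperfinite $N \in \NSE{\Nats}$, an internal hyperfinite sequence of distinct points $(s_1,\ldots,s_N)$ in $\NSE{X}$, and an internal sequence of $\NSE{}$-Borel sets $(B_1, \ldots, B_N)$ satisfying all four conditions internally. Setting $S_\delta \defas \{s_1, \ldots, s_N\}$ and $B(s_i) \defas B_i$ yields a $\delta$-hyperfinite representation as required.

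I do not anticipate a real obstacle. The only point requiring minor care is packaging the standard existence statement so that the transferred conclusion delivers a \emph{hyperfinite} indexed family rather than a merely countable one; this is handled by the standard device of quantifying over finite sequences coded by their length $n \in \Nats$, whose transfer quantifies over $\NSE{\Nats}$ and therefore produces a hyperfinite sequence. No saturation is needed beyond what is already assumed in \cref{SecIntroNSA}.
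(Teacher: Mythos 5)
Your proof is essentially correct and takes the natural transfer-from-compactness route, which is the standard way this result is established. One small slip in the construction paragraph: after disjointifying the open cover to $B(x_i) = U_i \setminus \bigcup_{j<i} U_j$, the ball center $x_i$ need not lie in $B(x_i)$---it may have been absorbed into an earlier cell $U_j$ with $j<i$. The immediate fix is to discard empty cells and then pick an arbitrary representative point $s_i \in B(x_i)$ from each nonempty cell, taking $S_\epsilon = \{s_i\}$; since $B(x_i)\subset U_i$ the diameter bound $\le\epsilon$ is preserved. Your subsequent transferable formulation of $\Phi(\epsilon)$ already quantifies over the representative points $(s_1,\ldots,s_n)$ independently of the cover, so this change costs nothing. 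The rest is sound: $\Phi(\epsilon)$ is a bounded first-order statement in the superstructure (quantifying over $n\in\Nats$, a point sequence in $X^n$, and a Borel-set sequence in $\BorelSets{X}^n$), transfer gives $\NSE{\Phi}(\delta)$ for the chosen positive infinitesimal $\delta$, and unpacking produces a hyperfinite $\delta$-partition with internal index set, exactly matching the four clauses of \cref{hyperapproxsp}. You are also right that no saturation beyond the ambient assumption is needed.
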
 

We fix such a hyperfinite representation of $\NSE{X}$ and denoted it by $S$ in the remainder of this section. 
We now define a hyperfinite Markov process transition kernel on $S$ from $\{g(x,1,\cdot)\}_{x\in X}$. For $i,j\in S$, define by $G_{ij}=\NSE{g}(i,1,B(j))$. For every internal set $A\subset S$, define $G_{i}(A)=\sum_{j\in A}G_{ij}$. It is straightforward to check that $\{G_{ij}\}_{i,j\in S}$ defines the one-step internal transition kernel for some hyperfinite Markov process. In order to establish a connection between $\{G_{ij}\}_{i,j\in S}$ and $\{g(x,1,\cdot)\}_{x\in X}$, we impose the following nonstandard condition on $\{\NSE{g}(x,1,\cdot)\}_{x\in \NSE{X}}$. 

\begin{condition}{SSF}\label{assumptionwsf}
Let $(S, \{B(s)\}_{s\in S})$ be a hyperfinite representation of $X$. 
The transition kernel $\{g(x,1,\cdot)\}_{x\in X}$ satisfies the \emph{S-strong Feller property} if, for every $x\in \NSE{X}$ and every internal $A\subset S$, we have 
\[
|\NSE{g}(x,1,\bigcup_{a\in A}B(a))-\NSE{g}(s_x,1,\bigcup_{a\in A}B(a))|\approx 0,
\]
where $s_x$ is the unique point in $S$ with $x\in B(s_x)$. 
\end{condition}

For two (internal) probability measures $P_1$ and $P_2$, we use $\parallel P_1-P_2 \parallel$ to denote the *total variational distance between $P_1$ and $P_2$. \cref{assumptionwsf} is a consequence of the following classical condition on $\{g(x,1,\cdot)\}_{x\in X}$. 

\begin{condition}{DSF}\label{assumptiondsf}
The transition kernel $\{g(x,1,\cdot)\}_{x\in X}$ satisfies the \emph{strong Feller property} if for every $x\in X$ and every $\epsilon>0$ there exists $ \delta>0$ such that
\[
(\forall y\in X)(|y-x|<\delta \implies \parallel g(x,1,A)-g(y,1,A)\parallel <\epsilon).
\]
\end{condition}

Suppose $\{g(x,1,\cdot)\}_{x\in X}$ is a reversible transition kernel with stationary distribution $\pi$. Define $\Pi$ on $(S,\{B(s)\}_{s\in S})$ by letting $\Pi(\{a\})=\NSE{\pi}(B(a))$ for every $a\in S$. The internal transition kernel $\{G_{i}(\cdot)\}_{i\in S}$ may not be
*reversible with respect to $\Pi$. In the following theorem, we construct an internal transition kernel $\{H_{i}(\cdot)\}_{i\in S}$, which is infinitesimally close to $\{G_{i}(\cdot)\}_{i\in S}$ and is *reversible with respect to $\Pi$. 

\begin{theorem}[{\citep[][Thm.~3.7]{anderson2019gibbs}}]\label{closereverse}
Let $\{g(x,1,\cdot)\}_{x\in X}$ be a transition kernel on $X$. 
Suppose $\{g(x,1,\cdot)\}_{x\in X}$ is reversible with stationary measure $\pi$ and satisfies \cref{assumptionwsf}.
For every $i,j\in S$, define
\[
H_{i}(\{j\})=\frac{\int_{B(i)}\NSE{g}(x,1,B(j))\NSE{\pi}(\dee x)}{\NSE{\pi}(B(i))}
\]
if $\NSE{\pi}(B(i))\neq 0$. Define $H_{ij}=G_{ij}=\NSE{g}(i,1,B(j))$ if $\NSE{\pi}(B(i))=0$.
\begin{enumerate}
\item $\{H_{i}(\cdot)\}_{i\in S}$ defines an one-step internal transition kernel. 
\item $\Pi$ is a *stationary distribution for $\{H_{i}(\cdot)\}_{i\in S}$. 
\item $\{H_{i}(\cdot)\}_{i\in S}$ is *reversible with respect to $\Pi$.
\item $\max_{i\in S}\parallel G^{(t)}_{i}(\cdot)-H^{(t)}_{i}(\cdot) \parallel\approx 0$ for every $t\in \Nats$.
\end{enumerate}
\end{theorem}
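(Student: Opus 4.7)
The plan is to verify the four conclusions in turn. Parts (1)--(3) reduce to direct internal computations that transfer the analogous standard identities for $g$ and $\pi$; part (4), where \cref{assumptionwsf} does the real work, will be the hard part.

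For (1), internal additivity together with the partition property of the $B(j)$ immediately gives $\sum_{j\in S} H_i(\{j\}) = \frac{1}{\NSE{\pi}(B(i))}\int_{B(i)} \NSE{g}(x,1,\NSE{X})\, \NSE{\pi}(\dee x) = 1$, with the degenerate case $\NSE{\pi}(B(i)) = 0$ absorbed by the fallback $H_{ij} = G_{ij}$. For (2), unfolding $H_i$ and applying the transferred stationarity identity $\int \NSE{g}(x,1,B(j))\, \NSE{\pi}(\dee x) = \NSE{\pi}(B(j))$ gives $\sum_{i\in S} \Pi(\{i\}) H_i(\{j\}) = \Pi(\{j\})$. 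For (3), the transferred detailed-balance identity $\int_C \int_D g(x,1,\dee y)\, \pi(\dee x) = \int_D \int_C g(y,1,\dee x)\, \pi(\dee y)$, specialized to $C = B(i)$ and $D = B(j)$, yields
\[
\Pi(\{i\}) H_i(\{j\}) = \int_{B(i)} \NSE{g}(x,1,B(j))\, \NSE{\pi}(\dee x) = \int_{B(j)} \NSE{g}(y,1,B(i))\, \NSE{\pi}(\dee y) = \Pi(\{j\}) H_j(\{i\}).
\]

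For (4), the key step will be to promote the pointwise-infinitesimal content of \cref{assumptionwsf} to a single infinitesimal that is uniform in both $x$ and the internal $A \subset S$. I would introduce the internal function $F: \NSE{X} \to \NSE{\Reals}$ defined by
\[
F(x) = \sup_A \bigl|\NSE{g}(x,1,\textstyle\bigcup_{a \in A} B(a)) - \NSE{g}(s_x, 1, \textstyle\bigcup_{a \in A} B(a))\bigr|,
\]
where the supremum ranges over internal $A \subset S$ and is attained by transfer. \cref{assumptionwsf} says $F(x) \approx 0$ for every $x \in \NSE{X}$, so for each standard $n$ the internal set $\{x : F(x) < 1/n\}$ equals $\NSE{X}$; overspill then produces a single infinitesimal $\epsilon$ with $F(x) \leq \epsilon$ for all $x \in \NSE{X}$. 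Since $s_x = i$ whenever $x \in B(i)$, averaging this uniform bound against $\NSE{\pi}$ over $B(i)$ gives $\parallel G_i(\cdot) - H_i(\cdot) \parallel \leq \epsilon$ uniformly in $i \in S$. A standard telescoping argument together with the non-expansiveness of total variation under application of a stochastic kernel then boosts this to $\max_{i \in S} \parallel G_i^{(t)}(\cdot) - H_i^{(t)}(\cdot) \parallel \leq t\epsilon$, which is infinitesimal precisely for standard $t \in \Nats$; this is the reason for the standard-$t$ restriction in the statement. The most delicate point is the overspill step for $F$: it requires $F$ to be a genuinely internal function of $x$, which in turn relies on $S$, the balls $B(s)$, and the map $x \mapsto s_x$ all being part of the internal hyperfinite-representation data.
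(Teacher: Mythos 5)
The paper does not prove \cref{closereverse}; it imports it verbatim from \citet[Thm.~3.7]{anderson2019gibbs}, so there is no in-text proof to compare against. On its own merits, your argument is correct and is essentially the natural one. Parts (1)--(3) are straightforward internal computations: (1) uses that $\{B(j)\}$ partitions $\NSE{X}$; (2) pushes the sum over $i$ inside to recover $\int_{\NSE{X}}\NSE{g}(x,1,B(j))\,\NSE{\pi}(\dee x)=\NSE{\pi}(B(j))$ by transferred stationarity (noting the $\NSE{\pi}(B(i))=0$ blocks contribute nothing); (3) is the transferred detailed-balance identity specialized to $C=B(i)$, $D=B(j)$, and it also covers the degenerate case since both sides vanish when $\NSE{\pi}(B(i))=0$ or $\NSE{\pi}(B(j))=0$. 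Your part (4) is the right idea and the only step with real content: you must upgrade the pointwise infinitesimal in \cref{assumptionwsf} to a single infinitesimal uniform in both $x$ and $A$. Your internal function $F$ is indeed internal (the hyperfinite representation and the selector $x\mapsto s_x$ are internal by the internal definition principle, and the sup over internal $A\subset S$ is a max over a hyperfinite family), and since $F\le 1$ and $F(x)\approx 0$ for all $x$, the internal quantity $\epsilon=\sup_{x\in\NSE{X}}F(x)$ is itself infinitesimal — this is a touch cleaner than invoking overspill, though overspill works too. Averaging over $B(i)$ against $\NSE{\pi}(\cdot\mid B(i))$ then gives $\|G_i-H_i\|\le\epsilon$ uniformly in $i$ (trivially when $\NSE{\pi}(B(i))=0$), and the telescoping $G^{(t)}-H^{(t)}=\sum_{k=0}^{t-1}G^{(k)}(G-H)H^{(t-1-k)}$ with the transferred non-expansiveness of total variation yields $\max_i\|G_i^{(t)}-H_i^{(t)}\|\le t\epsilon\approx 0$ for standard $t$, explaining the $t\in\Nats$ restriction. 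No gaps.
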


We shall use notations $\{g(x,1,\cdot)\}_{x\in X}$, $\{G_{i}(\cdot)\}_{i\in S}$ and $\{H_{i}(\cdot)\}_{i\in S}$ as they defined in this section for the rest of the paper.

\section{Hyperfinite Representation of Dirichlet Form}

In this section, we study the relationship between Dirichlet forms for general discrete-time Markov processes and Dirichlet forms for hyperfinite Markov processes. 

Let $\{g(x,1,\cdot)\}_{x\in X}$ denote the transition kernel of a Markov process and let $\pi$ denote its stationary distribution. 
the \emph{Dirichlet form} associated with $g$ of a function $f: X\to \Reals$ is defined to be 
\[
\Diric{g}{f}{f}=\frac{1}{2}\int_{x\in X}\int_{y\in X}[f(x)-f(y)]^{2}g(x,1,\dee y)\pi(\dee x).
\]
When the state space $X$ is finite, the integral in the above equation becomes summation.

For the remainder of this section, assume that the state space $X$ is compact. Let $(S,\{B(s)\}_{s\in S})$ denote a hyperfinite representation of $X$. Let $\{g(x,1,\cdot)\}_{x\in X}$ be a transition kernel on $X$ with stationary distribution $\pi$. We define $\{G_{i}(\cdot)\}_{i\in S}$, $\{H_{i}(\cdot)\}_{i\in S}$ and $\Pi$ as in \cref{hyprocess}. We now use hyperfinite Dirichlet forms to approximate standard Dirichlet forms. We first quote the following well-known result in nonstandard analysis.

\begin{theorem}[{\citep[][Thm.~5.3]{NDV}}]\label{sctsfull}
Let $(X,\BorelSets X,\mu)$ be a Radon probability space with Borel $\sigma$-algebra and let $f: X\to \Reals$ be measurable. 
Then $\NSE{f}$ is a lifting of $f$ with respect to $\overline{\NSE{\mu}}$ i.e.
\[
\NSE{f}(x)\approx f(\ST(x))
\]
for $\overline{\NSE{\mu}}$-almost all $x\in \NSE{X}$. Consequently, there is a set $Y\subset \NSE{X}$ with $\overline{\NSE{\mu}}(Y)=1$ such that for all $y_1,y_2\in Y$
\[
(y_1\approx y_2)\implies (\NSE{f}(y_1)\approx \NSE{f}(y_2)).
\]
\end{theorem}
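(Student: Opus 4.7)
The plan is to reduce the lifting problem to the case of a continuous function via Luzin's theorem, and then invoke the standard nonstandard characterization of continuity. First I would apply Luzin's theorem for Radon probability measures: for each $n \in \Nats$, there is a compact set $K_n \subseteq X$ with $\mu(X \setminus K_n) < 1/n$ such that the restriction $f|_{K_n}$ is continuous. Replacing $K_n$ by $\bigcup_{k \leq n} K_k$ if necessary, I may assume the $K_n$ are nested.

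Next I would push things into the nonstandard universe. Each $\NSE{K_n}$ is an internal Borel subset of $\NSE{X}$ with $\NSE{\mu}(\NSE{K_n}) = \mu(K_n) > 1 - 1/n$, so $\overline{\NSE{\mu}}(\NSE{K_n}) \geq 1 - 1/n$. Because $K_n$ is compact, every point of $\NSE{K_n}$ is near-standard with $\ST(x) \in K_n$. Set
\[
Y = \bigcup_{n \in \Nats} \NSE{K_n}.
\]
This is a countable union of Loeb-measurable sets whose Loeb measures tend to $1$, so $\overline{\NSE{\mu}}(Y) = 1$.

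The core lifting step: fix $x \in \NSE{K_n}$ and let $y = \ST(x) \in K_n$. Since $f|_{K_n}$ is continuous at $y$ and $\NSE{d}(x,y) \approx 0$, the nonstandard characterization of continuity gives $\NSE{(f|_{K_n})}(x) \approx f(y)$. Because nonstandard extension commutes with restriction on an internal subset, $\NSE{f}(x) = \NSE{(f|_{K_n})}(x)$, yielding $\NSE{f}(x) \approx f(\ST(x))$ for every $x \in Y$. The ``consequently'' clause is then immediate: if $y_1, y_2 \in Y$ with $y_1 \approx y_2$, then $\ST(y_1) = \ST(y_2)$ (by Hausdorffness of $X$), so
\[
\NSE{f}(y_1) \approx f(\ST(y_1)) = f(\ST(y_2)) \approx \NSE{f}(y_2).
\]

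The main obstacle is not the logical skeleton, which is essentially a three-line argument, but verifying two background ingredients in the required generality. First, one needs Luzin's theorem for Radon probability measures, whose standard proof combines inner regularity with a Tietze-type extension to obtain continuity on a compact set. Second, one needs that the standard part map correctly pushes $\overline{\NSE{\mu}}$ forward onto $\mu$, so that the approximation $\mu(K_n) \to 1$ actually translates into $\overline{\NSE{\mu}}(Y) = 1$. Neither point is conceptually hard, but both rely on the Radon hypothesis and on having enough saturation to control the countable union defining $Y$ as a Loeb-measurable set.
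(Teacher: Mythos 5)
The paper states this as a cited result (\citep[Thm.~5.3]{NDV}) and does not reproduce a proof, so there is no in-paper argument to compare against. Your proposal is correct and is essentially the canonical proof of this lifting theorem (due to Anderson, and the one given in the cited source): Luzin's theorem to reduce to continuity on a compact set, Robinson's characterization of compactness to get near-standardness of every point of $\NSE{K_n}$ with standard part in $K_n$, the nonstandard characterization of continuity, and countable additivity of the Loeb measure to assemble $Y$.

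Two small remarks. Your nesting step (replacing $K_n$ by $\bigcup_{k \le n} K_k$) is fine by the closed pasting lemma but is not actually needed: you take $Y = \bigcup_n \NSE{K_n}$ and $\overline{\NSE{\mu}}(Y) \ge \sup_n \overline{\NSE{\mu}}(\NSE{K_n}) = 1$ regardless, and each $x \in Y$ lies in some single $\NSE{K_n}$ where the continuity argument runs. Also, in your closing paragraph you list as a prerequisite that $\ST$ pushes $\overline{\NSE{\mu}}$ forward to $\mu$; this is not needed here. All you use is the transfer identity $\NSE{\mu}(\NSE{K_n}) = \mu(K_n)$, so $\overline{\NSE{\mu}}(\NSE{K_n}) = \ST(\NSE{\mu}(\NSE{K_n})) = \mu(K_n)$, and then $\sigma$-additivity of the Loeb measure. (The pushforward statement is in fact a \emph{consequence} of the lifting theorem, not an ingredient.) Similarly, no extra saturation is required beyond the usual countable saturation built into the Loeb construction, since $\Loeb{\NSE{\BorelSets X}}$ is a $\sigma$-algebra by definition.
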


Before proving the main result of this section, we quote the following useful lemma.
\begin{lemma}[{\citep[][Lemma.~7.24]{Markovpaper}}] \label{tvfunction}
Let $P_1$ and $P_2$ be two internal probability measures on a hyperfinite set $S$. Then
\[
\parallel P_1(\cdot)-P_2(\cdot) \parallel \geq \NSE{\sup}_{f: S\to {^{*}[0,1]}}|\sum_{i\in S}P_{1}(\{i\})f(i)-\sum_{i\in S}P_{2}(\{i\})f(i)|,
\]
where $\parallel P_1(\cdot)-P_2(\cdot) \parallel=\NSE{\sup}_{A\in \mathcal{I}(S)}|P_{1}(A)-P_{2}(A)|$ and the function $f$ ranges over all internal functions. 
\end{lemma}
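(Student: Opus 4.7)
The plan is to reduce the statement to a first-order assertion about finite probability measures and then invoke transfer. In the standard finite setting, fix a finite set $S$, probability measures $P_1, P_2$ on $S$, and any $f : S \to [0,1]$. Write $\Delta(i) = P_1(\{i\}) - P_2(\{i\})$ and decompose $S = S_+ \cup S_-$ where $S_+ = \{i : \Delta(i) > 0\}$ and $S_- = S \setminus S_+$. Since $\sum_{i \in S} \Delta(i) = 0$, one has $\sum_{S_+} \Delta(i) = -\sum_{S_-} \Delta(i)$, and each side equals $\sup_{A \subseteq S} |P_1(A) - P_2(A)|$ (with the supremum attained at $A = S_+$).

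For any $f : S \to [0,1]$, since $\Delta(i) f(i) \leq \Delta(i)$ on $S_+$ (as $f \leq 1$) and $\Delta(i) f(i) \leq 0$ on $S_-$ (as $\Delta(i) < 0$ and $f \geq 0$),
\[
\sum_i \Delta(i) f(i) \;\leq\; \sum_{S_+} \Delta(i) \;=\; \sup_{A \subseteq S} |P_1(A) - P_2(A)|,
\]
and a symmetric argument applied to $1 - f$ (or to $-\Delta$) gives the corresponding lower bound $-\sup_{A \subseteq S} |P_1(A) - P_2(A)|$. Taking absolute values and then the supremum over $f : S \to [0,1]$ yields the finite version of the inequality.

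Now I would apply transfer. The finite statement can be phrased as a first-order sentence in the standard universe: for every finite set $S$, every pair $P_1, P_2$ of probability measures on $S$, and every function $f : S \to [0,1]$, $|\sum_i P_1(\{i\}) f(i) - \sum_i P_2(\{i\}) f(i)| \leq \sup_{A \subseteq S} |P_1(A) - P_2(A)|$. Its transfer replaces ``finite'' by ``hyperfinite,'' ``probability measure'' by ``internal probability measure,'' ``subset'' by ``internal subset'' (i.e.\ elements of $\mathcal{I}(S)$), and ``$[0,1]$-valued function'' by ``internal ${}^*[0,1]$-valued function.'' Since the right-hand side is exactly the definition of $\parallel P_1 - P_2 \parallel$ given in the lemma, taking the internal supremum over internal $f$ on the left gives the asserted inequality.

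There is no substantial obstacle here: the entire argument is a one-line estimate in the finite case followed by transfer. The only subtlety worth attention is formulating the finite statement so that its transfer matches the nonstandard objects exactly, in particular ensuring that the quantifier over $f : S \to [0,1]$ transfers to a quantifier over internal $f : S \to {^*[0,1]}$ and that the quantifier over subsets transfers to a quantifier over internal subsets.
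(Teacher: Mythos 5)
Your proof is correct, and since the paper only cites this lemma from \citep[Lemma 7.24]{Markovpaper} without reproducing its proof, there is nothing local to compare against; the finite-case-plus-transfer route you take is the standard and expected one. The finite-case estimate is sound: the decomposition into $S_\pm$ gives $\sup_{A}|P_1(A)-P_2(A)| = \sum_{S_+}\Delta(i)$, and the pointwise bounds $\Delta(i)f(i)\le \Delta(i)$ on $S_+$ and $\Delta(i)f(i)\le 0$ on $S_-$ (together with the symmetric argument for $-\Delta$) give the two-sided bound; one could phrase this slightly more slickly by noting $f\mapsto \sum_i\Delta(i)f(i)$ is affine on the cube $[0,1]^S$ and hence extremized at vertices, i.e.\ at indicator functions, but your argument is equivalent. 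Your care in formulating the finite statement so that the quantifiers over subsets and $[0,1]$-valued functions transfer to internal subsets and internal ${}^*[0,1]$-valued functions is exactly the right thing to check, and the final step (an internal supremum of quantities each bounded by a fixed hyperreal is itself bounded by that hyperreal) is valid.
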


Our main result in this section is:
\begin{theorem}\label{hydstd}
Suppose $X$ is a compact metric space.
Let $\{U_1,U_2,\dotsc,U_{K}\}\subset \NSE{\BorelSets X}$ be a hyperfinite partition of $\NSE{X}$ such that the diameter of $U_j$ is infinitesimal for each $j\leq K$. 
Then there exists a hyperfinite representation $(S_X,\{B(s):s\in S_X\})$ of $X$ such that 
\begin{enumerate}
\item $S_X$ has internal cardinality $K$ and $\{B(s): s\in S_X\}=\{U_j: j\leq K\}$.
\item Suppose \cref{assumptionwsf} holds, then for every continuous function $f: X\to \Reals$, we have
\[
\Diric{g}{f}{f}\approx \frac{1}{2}\sum_{s,t\in S_{X}}[F(s)-F(t)]^{2}H_{s}(\{t\})\Pi(\{s\})
\]
where $F: S_X\to \HReals$ is defined as $F(s)=\NSE{f}(s)$ for every $s\in S_X$. 
\end{enumerate}
\end{theorem}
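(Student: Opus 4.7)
Part (1) is a short construction: apply the internal axiom of choice to the hyperfinite family $\{U_1,\ldots,U_K\}$ to pick one representative $s_j\in U_j$ for each $j\leq K$, set $S_X=\{s_1,\ldots,s_K\}$ and $B(s_j)=U_j$, and check the four clauses of \cref{hyperapproxsp}: hyperfinite internal cardinality $K$, $s_j\in B(s_j)$ by construction, $\bigcup_s B(s)=\NSE{X}$ with pairwise disjoint blocks (since $\{U_j\}$ partitions $\NSE{X}$), and infinitesimal diameters (by hypothesis). The assignment $\{B(s):s\in S_X\}=\{U_j:j\leq K\}$ is then built into the definition.

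For (2) the plan is to push the classical Dirichlet form through transfer and then evaluate the resulting internal double integral block-by-block on the partition. Because $\Diric{g}{f}{f}$ is a standard real, the transfer principle applied to the defining identity yields
\[
\Diric{g}{f}{f}=\frac{1}{2}\NSE{\!\int}_{\NSE{X}}\NSE{\!\int}_{\NSE{X}}[\NSE{f}(x)-\NSE{f}(y)]^{2}\,\NSE{g}(x,1,\dee y)\,\NSE{\pi}(\dee x).
\]
Hyperfinite additivity of the $\NSE{\!\int}$-integral over the partition $\{B(s)\}_{s\in S_X}$ rewrites this as $\tfrac{1}{2}\sum_{s,t\in S_X}\NSE{\!\int}_{B(s)}\NSE{\!\int}_{B(t)}[\NSE{f}(x)-\NSE{f}(y)]^{2}\,\NSE{g}(x,1,\dee y)\,\NSE{\pi}(\dee x)$. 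On each block I would then replace the integrand $[\NSE{f}(x)-\NSE{f}(y)]^{2}$ by the constant $[F(s)-F(t)]^{2}=[\NSE{f}(s)-\NSE{f}(t)]^{2}$; the surviving factor $\NSE{\!\int}_{B(s)}\NSE{g}(x,1,B(t))\NSE{\pi}(\dee x)$ is, by the definition of $H$ in \cref{closereverse}, exactly $H_s(\{t\})\Pi(\{s\})$ when $\Pi(\{s\})\neq 0$, and when $\Pi(\{s\})=0$ both the integral and $H_s(\{t\})\Pi(\{s\})$ vanish, so the identity holds in either case.

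The only nontrivial point---and the main obstacle---is the uniformity of the replacement error, since a hyperfinite sum of individually infinitesimal errors is not automatically infinitesimal. Here compactness of $X$ is decisive: $f$ is bounded by some standard $M$ and uniformly continuous, so for every standard $\epsilon>0$ transfer produces a standard $\delta>0$ with $\NSE{d}(x,y)<\delta\Rightarrow|\NSE{f}(x)-\NSE{f}(y)|<\epsilon$. Because every $B(s)$ has infinitesimal diameter (hence certainly below $\delta$), $|\NSE{f}(x)-F(s)|<\epsilon$ for every $s\in S_X$ and $x\in B(s)$; as $\epsilon>0$ was arbitrary, the internal quantity $\eta:=\NSE{\sup}_{s\in S_X,\,x\in B(s)}|\NSE{f}(x)-F(s)|$ is smaller than every positive standard real and so infinitesimal. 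The elementary bound $\bigl|[\NSE{f}(x)-\NSE{f}(y)]^{2}-[F(s)-F(t)]^{2}\bigr|\leq 8M\eta$, valid uniformly on each $B(s)\times B(t)$, then forces the total replacement error across all blocks to be at most $4M\eta\cdot\NSE{\!\int}_{\NSE{X}}\NSE{g}(x,1,\NSE{X})\NSE{\pi}(\dee x)=4M\eta\approx 0$, which closes the argument.
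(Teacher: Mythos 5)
Your proof is correct, and it takes a genuinely different and noticeably more economical route than the paper. The paper works with the one-variable auxiliary function $r(x)=\int_{y}[f(x)-f(y)]^{2}g(x,1,\dee y)$, which is measurable but not obviously continuous, and must therefore (i) invoke the lifting theorem (\cref{sctsfull}) to obtain S-continuity of $\NSE{r}$ only $\Loeb{\NSE{\pi}}$-almost everywhere, (ii) carefully choose each representative $s_{j}$ inside a full-measure ``good set'' $Y$, (iii) compare the point-evaluated kernel $G_{z}(\cdot)=\NSE{g}(z,1,\cdot)$ with the cell-averaged kernel $H_{z}(\cdot)$ via \cref{tvfunction} and \cref{closereverse}(4), which is exactly where \cref{assumptionwsf} enters, and (iv) run a saturation argument at the end to get a single $S_{X}$ that works for every continuous $f$ and every tolerance $1/k$. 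You bypass all four steps by keeping the two-variable integrand $[\NSE{f}(x)-\NSE{f}(y)]^{2}$ intact and block-decomposing the double integral directly, which causes the cell average $\int_{B(s)}\NSE{g}(x,1,B(t))\NSE{\pi}(\dee x)=H_{s}(\{t\})\Pi(\{s\})$ to appear as an exact algebraic identity rather than as a limit of $G$; the only approximation is then the replacement of the continuous $f$ by its cell values, whose total error is controlled by the single internal infinitesimal $\eta$ and the normalizing factor $\int_{\NSE{X}}\NSE{g}(x,1,\NSE{X})\NSE{\pi}(\dee x)=1$. As a bonus, your argument works for an arbitrary choice of representatives $s_{j}\in U_{j}$ (so part (1) really is just internal choice, with no need for saturation), and it never invokes \cref{assumptionwsf} beyond the fact that the formula for $H$ from \cref{closereverse} is used to define $\hyDiric{H}{F}{F}$; the paper's SSF hypothesis is only needed for the other conclusions of \cref{closereverse}, not for this approximation statement. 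This is a correct strengthening; the one thing you might wish to make explicit is that each $\NSE{\int}$ here is the internal integral, so hyperfinite additivity over the internal partition $\{B(s)\times B(t)\}_{s,t\in S_{X}}$ is literally the transfer of finite additivity.
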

\begin{proof}
Let $\{U_j: j\leq K\}$ be a partition of $\NSE{X}$ of *Borel sets with infinitesimal radius and let $k\in \Nats$.
Pick $n\in \Nats$ and let $f_1,\dotsc,f_n$ be $n$ continuous functions from $X$ to $\Reals$. Let 
\[\label{fuppbd}
a=\max\{[f_{i}(x)-f_{i}(y)]^{2}: x,y\in X, 1\leq i\leq n\}\in \Reals.
\]
For $1\leq i\leq n$, let $r_{i}(x)=\int_{y\in X}[f_{i}(x)-f_{i}(y)]^{2}g(x,1,\dee y)$. As $r_{i}$ is a measurable function from $X\to \Reals$, by \cref{sctsfull}, there exists a Loeb measurable set $M_i$ with $\Loeb{\NSE{\pi}}(M_i)=1$ such that $\NSE{r_i}(x)\approx r_{i}(\ST(x))$ for $x\in M_i$. Let $M=\bigcap_{i=1}^{n}M_i$. Note that $\Loeb{\NSE{\pi}}(M)=1$. Thus, $\NSE{r_k}(y)\approx r_{k}(\ST(y))$ for all $1\leq k\leq n$ and all $y\in M$. 

Let $Y\subset M$ be a *Borel set such that $\NSE{\pi}(Y)>1-\frac{1}{ka}$. For $1\leq j\leq K$, if $U_j\cap Y=\emptyset$, we pick an arbitrary element $s_j\in U_j$. If $U_j\cap Y\neq \emptyset$, we pick an arbitrary element $s_j\in U_j\cap Y$. By the internal definition principle, we know that $S=\{s_j: j\leq K\}$ is a hyperfinite subset of $\NSE{X}$. 
For every $s\in S$, we use $B(s)$ to denote the unique element in $\{U_j:j\leq K\}$ which contains $s$.
It is easy to see that $(S, \{B(s): s\in S\})$ is a hyperfinite representation of $\NSE{X}$. 
Moreover, $S$ has internal cardinality $K$ and $\{B(s): s\in S\}=\{U_j: j\leq K\}$. 
For every $1\leq i\leq n$, let $F_i: S\to \NSE{\Reals}$ be $F_{i}(s)=\NSE{f_{i}}(s)$. For every $s\in S$ and every $1\leq i\leq n$, let
\[
R_{i}(s)=\sum_{t\in S}[F_{i}(s)-F_{i}(t)]^{2}H_{s}(\{t\}).
\]
Let $Z=\{s\in S: B(s)\cap Y\neq \emptyset\}$. It is straightforward to see that $\Pi(Z)=\NSE{\pi}(\bigcup_{s\in Z}B(s))\geq \NSE{\pi}(Y)>1-\frac{1}{ka}$. 

\begin{claim}\label{zapproxclaim}
For every $z\in Z$ and every $1\leq i\leq n$, we have $|R_{i}(z)-\NSE{r}_{i}(z)|\lessapprox \frac{2}{k}$ and $\NSE{r_i}(z)\approx r_{i}(\ST(z))$. 
\end{claim}
\begin{proof}
Pick some $1\leq i\leq n$ and pick some $z\in Z$. 
By construction, we have $z\in Y$. By \cref{sctsfull}, we have $\NSE{r_{i}}(z)\approx r_{i}(\ST(z))$. 
Let $Y_{i}\supset Y$ be a *Borel subset of $\NSE{X}$ such that $\NSE{g}(z,1,Y_{i})>1-\frac{1}{ka}$.  
We have
\[
\NSE{r_{i}}(z)&=\int_{x\in \NSE{X}}[(\NSE{f_i}(z)-\NSE{f_i}(x))^{2}]\NSE{g}(z,1,\dee x)\\
&=\int_{x\in Y_{i}}[(\NSE{f_i}(z)-\NSE{f_i}(x))^{2}]\NSE{g}(z,1,\dee x)+\int_{x\in \NSE{X}\setminus Y_{i}}[(\NSE{f_i}(z)-\NSE{f_i}(x))^{2}]\NSE{g}(z,1,\dee x).
\]
Note that $\int_{x\in \NSE{X}\setminus Y_{i}}[(\NSE{f_i}(z)-\NSE{f_i}(x))^{2}]\NSE{g}(z,1,\dee x)<\frac{1}{k}$.
Let 
\[
Z_{i}=\{s\in S: B(s)\cap Y_{i}\neq \emptyset\}. 
\]
Note that $G_{z}(Z_{i})>1-\frac{1}{ka}$ and $a$ in \cref{fuppbd} is an upper bound of
$[F_{i}(z)-F_{i}(s)]^2$. Then we have
\[
&\sum_{s\in S}[F_{i}(z)-F_{i}(s)]^{2}G_{z}(\{s\})\\
&=\sum_{s\in Z_{i}}[F_{i}(z)-F_{i}(s)]^{2}G_{z}(\{s\})+\sum_{s\in S\setminus Z_{i}}[F_{i}(z)-F_{i}(s)]^{2}G_{z}(\{s\}). 
\]
Note that $\sum_{s\in S\setminus Z_{i}}[F_{i}(z)-F_{i}(s)]^{2}G_{z}(\{s\})<\frac{1}{k}$. 

By the continuity of $f_i$, we have
\[
&\int_{x\in Y_{i}}[(\NSE{f_i}(z)-\NSE{f_i}(x))^{2}]\NSE{g}(z,1,\dee x)\\
&=\sum_{s\in Z_{i}}\int_{x\in B(s)\cap Y_{i}}[(\NSE{f_i}(z)-\NSE{f_i}(x))^{2}]\NSE{g}(z,1,\dee x)\\
&\approx \sum_{s\in Z_{i}}\int_{x\in B(s)\cap Y_{i}}[(\NSE{f_i}(z)-\NSE{f_i}(s))^{2}]\NSE{g}(z,1,\dee x)\\
&= \sum_{s\in Z_{i}}[(F_{i}(z)-F_{i}(s))^2]\NSE{g}(z,1,B(s)\cap Y_{i}).
\]
As $\sum_{s\in Z_{i}}[(F_{i}(z)-F_{i}(s))^2]G_{z}(\{s\})-\sum_{s\in Z_{i}}[(F_{i}(z)-F_{i}(s))^2]\NSE{g}(z,1,B(s)\cap Y_{i})<\frac{1}{k}$, we have $|\NSE{r_{i}}(z)-\sum_{s\in S}[(F_{i}(z)-F_{i}(s))^2]G_{z}(\{s\})|\lessapprox \frac{2}{k}$.

By \cref{closereverse} and \cref{tvfunction}, we have
\[
\sum_{s\in S}[F_{i}(z)-F_{i}(s)]^{2}G_{z}(\{s\})=a\sum_{s\in S}\frac{[F_{i}(z)-F_{i}(s)]^{2}}{a}G_{z}(\{s\})\approx \sum_{s\in S}[F_{i}(z)-F_{i}(s)]^{2}H_{z}(\{s\})
\]
Hence, we have $|R_{i}(z)-\NSE{r}_{i}(z)|\lessapprox \frac{2}{k}$ for every $z\in Z$ and every $1\leq i\leq n$. 
\end{proof}

For $1\leq i\leq n$, let $\cR_{S}(F_i,F_i)=\frac{1}{2}\sum_{s\in S}R_{i}(s)\Pi(\{s\})$. 

\begin{claim}
For $1\leq i\leq n$, we have $|\Diric{g}{f_i}{f_i}-\cR_{S}(F_i,F_i)|<\frac{5}{k}$. 
\end{claim}
\begin{proof}
Pick some $1\leq i\leq n$. 
By definition, we have 
\[
2\Diric{g}{f_i}{f_i}&=\int_{x\in X}r_{i}(x)\pi(\dee x)\\
&=\int_{x\in \NSE{X}}\NSE{r_i}(x)\NSE{\pi}(\dee x)\\
&=\int_{x\in Y}\NSE{r_i}(x)\NSE{\pi}(\dee x)+\int_{x\in \NSE{X}\setminus Y}\NSE{r_i}(x)\NSE{\pi}(\dee x). 
\]
Note that $\int_{x\in \NSE{X}\setminus Y}\NSE{r_i}(x)\NSE{\pi}(\dee x)<\frac{1}{k}$. 

On the other hand, we have
\[
2\cR_{S}(F_i,F_i)&=\sum_{s\in S}R_{i}(s)\Pi(\{s\})\\
&=\sum_{s\in Z}R_{i}(s)\Pi(\{s\})+\sum_{s\in S\setminus Z}R_{i}(s)\Pi(\{s\}).
\]
Note that $\sum_{s\in S\setminus Z}R_{i}(s)\Pi(\{s\})<\frac{1}{k}$. 

We now compare terms $\int_{x\in Y}\NSE{r_i}(x)\NSE{\pi}(\dee x)$ and $\sum_{s\in Z}R_{i}(s)\Pi(\{s\})$. 
Note that $\bigcup_{s\in Z}B(s)\supset Y$ and $\NSE{r_i}$ is S-continuous in $Y$. By \cref{zapproxclaim}, we have
\[
&|\int_{x\in Y}\NSE{r_i}(x)\NSE{\pi}(\dee x)-\sum_{s\in Z}R_{i}(s)\NSE{\pi}(B(s)\cap Y)|\\
&=|\sum_{s\in Z}\int_{x\in B(s)\cap Y}\NSE{r_i}(x)\NSE{\pi}(\dee x)-\sum_{s\in Z}R_{i}(s)\NSE{\pi}(B(s)\cap Y)|\\
&\approx |\sum_{s\in Z}\NSE{r_i}(s)\NSE{\pi}(B(s)\cap Y)-\sum_{s\in Z}R_{i}(s)\NSE{\pi}(B(s)\cap Y)|\lessapprox \frac{2}{k}\\
\]
Note that
\[
&\sum_{s\in Z}R_{i}(s)\Pi(\{s\})-\sum_{s\in Z}R_{i}(s)\NSE{\pi}(B(s)\cap Y)\\
&=\sum_{s\in Z}R_{i}(s)[\NSE{\pi}(B(s))-\NSE{\pi}(B(s)\cap Y)]<\frac{1}{k}.
\]
Thus, we have $|\Diric{g}{f_i}{f_i}-\cR_{S}(F_i,F_i)|\lessapprox\frac{4}{k}<\frac{5}{k}$.
\end{proof}

Let $\cC$ denote the collection of all continuous functions from $X$ to $\Reals$ and let $\phi_{f}^{k}(S)$ be the conjunction of the following formulas:
\begin{enumerate}
\item $(S,\{U_j:j\leq K\})$ is a hyperfinite representation of $\NSE{X}$. 
\item $|\NSE{\Diric{g}{f}{f}}-\frac{1}{2}\sum_{s,t\in S}[F(s)-F(t)]^{2}G_{s}(\{t\})\Pi(\{s\})|<\frac{5}{k}$ where $F(s)=\NSE{f}(s)$ 
for every $s\in S$. 
\end{enumerate} 
The family $\cB=\{\phi_{f}^{k}: f\in \cC, k\in \Nats\}$ is finitely satisfiable. By saturation, there exists $S_X$ such that every formula in $\cB$ is satisfied simultaneously. Such $(S_X,\{U_j:j\leq K\})$ is the desired hyperfinite representation.  
\end{proof}

The \emph{hyperfinite Dirichlet form} with $H$ of an internal function $F: S_X\to \NSE{\Reals}$ is defined to be
\[
\hyDiric{H}{F}{F}=\frac{1}{2}\sum_{s,t\in S_{X}}[F(s)-F(t)]^{2}H_{s}(\{t\})\Pi(\{s\}).
\]
The result in this section shows that, under moderate conditions, we can use hyperfinite Dirichlet form to approximate the standard Dirichlet form. 

\section{Nonstandard Characterization of Density Functions}\label{secnsdensity}
In this section, we study density functions of transition kernels and stationary distributions via studying their corresponding hyperfinite counterparts. The materials presented in this section are closely related to \citep{zimradon}. We start by introducing the notion of S-integrability from nonstandard analysis. 

\begin{definition}\label{defsint}
Let $(\Omega,\cA,P)$ be an internal probability space 
and let $F: \Omega\to {^{*}\Reals}$ be an internally integrable function such that $\ST(F)$ exists $\Loeb{P}$-almost surely. 
Then $F$ is \emph{S-integrable} with respect to $P$  if $\ST({F})$ is $\Loeb{P}$-integrable, and ${^{*}\int F\dee P}\approx \int \ST({F})\dee \overline{P}$.
\end{definition}

The following theorem provides several ways to verify the S-integrability of an internal function $F$. 

\begin{theorem} [{\citep[][Theorem~4.6.2]{NSAA97}}]\label{Lintegral} 
Suppose $(\Omega,\cA,P)$ is an internal probability space, and $F: \Omega\to {^{*}\Reals}$ is an internally integrable function such that $\ST({F})$ exists $\Loeb{P}$-almost surely. Then the following are equivalent:
\begin{enumerate}
\item $\ST({\int |F|\dee P})$ exists and it equals to $\lim_{n\to \infty}\ST({\int |F_n|\dee P})$ where for $n\in \Nats$, $F_n=\min\{F,n\}$ when $F\geq 0$ and $F_n=\max\{F,-n\}$ when $F\leq 0$.

\item For every infinite $K>0$, $\int_{|F|>K}|F|\dee P\approx 0$.

\item $\ST({\int |F|\dee P})$ exists, and for every $B$ with $P(B)\approx 0$, we have $\int_{B}|F|\dee P\approx 0$.

\item $F$ is S-integrable with respect to $P$. 
\end{enumerate}
\end{theorem}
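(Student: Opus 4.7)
The plan is to prove the four-way equivalence via the cyclic chain $(4) \Rightarrow (1) \Rightarrow (2) \Rightarrow (3) \Rightarrow (4)$. Throughout, I reduce to the case $F \geq 0$, since $F = F^+ - F^-$ and every clause passes cleanly between $F$, $|F|$, $F^+$, and $F^-$. The key structural identity is that for $F \geq 0$ one has $F - F_n = (F - n)_+$, so $\int F\, dP - \int F_n\, dP = \int(F - n)_+\, dP$, which simultaneously measures the tail of $F$ and the truncation gap. This object, viewed as an internal non-increasing family in $n \in \NSE{\Nats}$, is the workhorse.

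For $(4) \Rightarrow (1)$, each $F_n$ is bounded by $n$, hence automatically S-integrable, and $\ST(F_n) = \min\{\ST(F), n\}$ on the set of full $\overline{P}$-measure where $\ST(F)$ exists. The defining property of S-integrability gives ${}^*\!\int F_n\, dP \approx \int \ST(F_n)\, d\overline{P}$, and standard monotone convergence on the Loeb side yields $\int \ST(F_n)\, d\overline{P} \nearrow \int \ST(F)\, d\overline{P} = \ST({}^*\!\int F\, dP)$, which rearranges to (1). For $(1) \Rightarrow (2)$, the hypothesis gives $\ST\!\bigl(\int(F - n)_+\, dP\bigr) \to 0$ as standard $n \to \infty$, and for any infinite $K$, monotonicity yields $\int(F - K)_+\, dP \leq \int(F - n)_+\, dP$ for every standard $n$, hence is infinitesimal. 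A Chebyshev-type bound $KP(F > 2K) \leq \int(F - K)_+\, dP$ then forces $KP(F>K) \approx 0$, and combining, $\int_{F > K} F\, dP = \int(F - K)_+\, dP + KP(F > K) \approx 0$.

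For $(2) \Rightarrow (3)$, I first verify that $\ST\!\bigl(\int |F|\, dP\bigr)$ exists: the internal set $\{n \in \NSE{\Nats} : \int(|F| - n)_+\, dP < \epsilon\}$ contains every infinite $n$ by (2), hence by underspill contains some standard $n_\epsilon$; monotonicity in $n$ propagates this to all standard $n \geq n_\epsilon$, so $\int|F|\, dP \leq \int|F_{n_\epsilon}|\, dP + \epsilon \leq n_\epsilon + \epsilon$. Given $B$ with $P(B) \approx 0$, overspill applied to the internal map $K \mapsto KP(B)$ produces an infinite $K$ with $KP(B) \approx 0$; then $\int_B|F|\, dP \leq KP(B) + \int_{|F|>K}|F|\, dP \approx 0$ by (2). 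For $(3) \Rightarrow (4)$, Markov's inequality $P(|F|>K) \leq \ST(\int|F|\, dP)/K \approx 0$ for infinite $K$, combined with (3), recovers (2), whence by the underspill argument $\int(|F| - n)_+\, dP \to 0$ along standard $n$. Each bounded $F_n$ is S-integrable, giving $\int F_n\, dP \approx \int \ST(F_n)\, d\overline{P}$; the dominating function $|\ST(F)|$ lies in $L^1(\overline{P})$ since its Loeb truncations have uniformly bounded integrals, so dominated convergence delivers $\int \ST(F_n)\, d\overline{P} \to \int \ST(F)\, d\overline{P}$. Combining with $|\int F\, dP - \int F_n\, dP| \leq \int(|F| - n)_+\, dP \to 0$ yields $\int F\, dP \approx \int \ST(F)\, d\overline{P}$, i.e., (4).

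The main obstacle is the bridge between statements at infinite truncation levels $K$ (where tails are controlled either by hypothesis in (2) or by Markov in (3)) and statements at standard levels $n$ (where the Loeb-side monotone or dominated convergence can be applied). The unifying device is underspill on the internal non-increasing family $n \mapsto \int(|F| - n)_+\, dP$, which converts infinitesimality for all infinite $n$ into genuine standard convergence. Making this bridge precise is the one nontrivial step; once built, the remaining manipulations are routine truncation arguments coupled with standard Lebesgue theory on the Loeb space.
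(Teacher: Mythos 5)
Your overall architecture---the cycle $(4)\Rightarrow(1)\Rightarrow(2)\Rightarrow(3)\Rightarrow(4)$ driven by the internal nonincreasing family $n\mapsto\int(|F|-n)_+\,\dee P$ together with underspill/overspill---is the standard argument for this classical result, and three of the four legs are sound: clauses (1)--(3) involve only $|F|$ (note $|F_n|=\min\{|F|,n\}$), so your reduction to nonnegative functions is harmless there, and you prove $(3)\Rightarrow(4)$ directly for general $F$. (For context, the paper itself gives no proof of this theorem; it is quoted from the literature, so there is no in-paper argument to compare your route against. Also, a small separate point: in $(1)\Rightarrow(2)$ your Chebyshev step yields $K\,P(F>2K)\approx 0$; to conclude $K\,P(F>K)\approx 0$ apply the same bound at level $K/2$---trivial to fix.)

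The genuine gap is the opening claim that ``every clause passes cleanly between $F$, $|F|$, $F^{+}$ and $F^{-}$,'' which you invoke exactly where it fails: in $(4)\Rightarrow(1)$. With S-integrability as defined in \cref{defsint}---$\ST(F)$ Loeb-integrable and ${}^*\!\int F\,\dee P\approx\int \ST(F)\,\dee\Loeb{P}$---cancellation between $F^{+}$ and $F^{-}$ can hide mass escaping to infinity, so this property of $F$ does \emph{not} pass to $F^{\pm}$ or $|F|$. Concretely, on a hyperfinite uniform probability space let $F=N$ on an internal set of measure $\tfrac{1}{2N}$, $F=-N$ on a disjoint set of measure $\tfrac{1}{2N}$, and $F=0$ elsewhere, with $N$ infinite: then $\ST(F)=0$ almost surely and ${}^*\!\int F\,\dee P=0\approx\int\ST(F)\,\dee\Loeb{P}$, so clause (4) as literally defined holds, yet $\int|F|\,\dee P=1$ while $\lim_{n\to\infty}\ST(\int|F_n|\,\dee P)=0$, $\int_{\{|F|>N/2\}}|F|\,\dee P=1$, and $B=\{F\neq 0\}$ has $P(B)\approx 0$ with $\int_B|F|\,\dee P=1$; thus (1), (2) and (3) all fail. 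So the leg $(4)\Rightarrow(1)$ cannot be carried out as written: you need the textbook form of S-integrability, phrased in terms of $|F|$ (e.g.\ ${}^*\!\int|F|\,\dee P\approx\int|\ST(F)|\,\dee\Loeb{P}$, or clause (3) itself taken as the definition), under which S-integrability does pass to $F^{\pm}$, your reduction to $F\geq 0$ is legitimate, and the bounded-truncation plus monotone convergence argument goes through. Either state explicitly that you are working with that stronger definition (which is what the cited source intends) or add the absolute-value condition to clause (4); without one of these amendments the cycle is broken at this step.
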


For the rest of this section, let $Y$ be a compact metric space and let $\{A_i: 1\leq i\leq T\}$ be a hyperfinite partition of $\NSE{Y}$ such that the diameter for $A_i$ is infinitesimal for every $i\leq T$. 

\begin{definition}
Let $P, Q$ be two probability measures on $(Y,\BorelSets Y)$. 
$Q$ dominates $P$ if for each $\epsilon>0$, there exists $\delta>0$ such that $P(C)<\epsilon$ for all $C\in \BorelSets Y$ with $Q(C)<\delta$. 
Let $\cP$ be a family of probability measures on $(Y,\BorelSets Y)$. Then $\cP$ is uniformly dominated by $Q$ if, for every $\epsilon>0$, there exists $\delta>0$ such that $P(C)<\epsilon$ for all $P\in \cP$ and all $C\in \BorelSets Y$ with $Q(C)<\delta$. 
\end{definition}

Using the nonstandard characterization of domination, $Q$ dominates $P$ if and only if $\NSE{Q}(A)\approx 0$ implies $\NSE{P}(A)\approx 0$ for $A\in \NSE{\BorelSets Y}$. We now mimic proofs in \citep{zimradon} to give an useful nonstandard characterization of density functions. 

\begin{lemma}[{\citep[][Lemma.~2.1]{zimradon}}]\label{hyradonint}
Let $P,Q$ be two probability measures on $(Y,\BorelSets Y)$.
Suppose $P$ is dominated by $Q$. 
Then the function $\phi: \NSE{Y}\to \NSE{\Reals}$ defined by 
\[\label{defhyradon}
\phi(x)=\sum_{i=1}^{T}\frac{\NSE{P}(A_i)}{\NSE{Q}(A_i)}\indicate_{A_i}(x)
\]
is S-integrable with respect to $Q$.
\end{lemma}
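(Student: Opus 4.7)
The plan is to verify S-integrability of $\phi$ by checking condition~(2) of \cref{Lintegral}: for every infinite $K>0$, $\int_{\{\phi>K\}}\phi\,d\NSE{Q}\approx 0$. Note that $\phi$ is a nonnegative internal hyperfinite step function, so it is automatically internally integrable, and a direct hyperfinite sum yields
\[
\int_{\NSE{Y}}\phi\,d\NSE{Q}\;=\;\sum_{i=1}^{T}\frac{\NSE{P}(A_i)}{\NSE{Q}(A_i)}\NSE{Q}(A_i)\;=\;\sum_{i=1}^{T}\NSE{P}(A_i)\;=\;1,
\]
where the atoms with $\NSE{Q}(A_i)=0$ contribute nothing because standard absolute continuity, transferred, forces $\NSE{P}(A_i)=0$ in that case. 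In particular $\ST(\int\phi\,d\NSE{Q})=1$.

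The central observation is that for any internal $I\subseteq\{1,\dots,T\}$, the block $B=\bigcup_{i\in I}A_i$ satisfies the hyperfinite Radon--Nikodym-type identity
\[
\int_{B}\phi\,d\NSE{Q}\;=\;\sum_{i\in I}\NSE{P}(A_i)\;=\;\NSE{P}(B),
\]
so $\phi$ acts exactly like a density on unions of partition cells. Combined with the nonstandard characterization of domination recalled just above the lemma ($\NSE{Q}(B)\approx 0\Rightarrow\NSE{P}(B)\approx 0$), this identity reduces the whole argument to a one-line Markov inequality.

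Specifically, I would apply the identity to $B_K=\{\phi>K\}=\bigcup\{A_i:\NSE{P}(A_i)>K\NSE{Q}(A_i)\}$, which is internal by the internal definition principle. Summing the defining inequality over $i\in I_K$ gives the Markov-style bound $\NSE{P}(B_K)\geq K\cdot\NSE{Q}(B_K)$, hence $\NSE{Q}(B_K)\leq\NSE{P}(B_K)/K\leq 1/K\approx 0$. Domination then forces $\NSE{P}(B_K)\approx 0$, and the identity yields $\int_{B_K}\phi\,d\NSE{Q}=\NSE{P}(B_K)\approx 0$, which is exactly condition~(2). The same Markov bound applied with standard $n\in\Nats$ gives $\Loeb{\NSE{Q}}(\{\phi>n\})\leq 1/n$, so $\Loeb{\NSE{Q}}(\{\phi\text{ is infinite}\})=0$ and $\ST(\phi)$ is well-defined off a $\Loeb{\NSE{Q}}$-null set, satisfying the remaining hypothesis of \cref{Lintegral}.

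The main obstacle is really just spotting the Radon--Nikodym-like identity $\int_B\phi\,d\NSE{Q}=\NSE{P}(B)$ for internal unions of partition cells and recognizing that the domination hypothesis is precisely the bridge needed to convert the easy bound $\NSE{Q}(\phi>K)\leq 1/K$ into the desired infinitesimal bound on $\NSE{P}(\phi>K)$. Once these two ingredients are in place, the verification of condition~(2) and the almost-sure finiteness of $\ST(\phi)$ are essentially two-line calculations, and the equivalence (2)$\Leftrightarrow$(4) in \cref{Lintegral} delivers S-integrability.
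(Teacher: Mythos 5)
Your proof is correct, but it takes a different route than the paper's. Both arguments reduce to \cref{Lintegral}: the paper verifies criterion~(3) (that $\int_{A}\phi\,\dee\NSE{Q}\approx 0$ whenever $\NSE{Q}(A)\approx 0$ for arbitrary internal $A$), while you verify criterion~(2) (that $\int_{\{\phi>K\}}\phi\,\dee\NSE{Q}\approx 0$ for infinite $K$). The essential difference is that you only invoke the hyperfinite Radon--Nikodym identity $\int_{B}\phi\,\dee\NSE{Q}=\NSE{P}(B)$ on sets $B$ that are unions of entire partition cells, where it holds \emph{exactly} because $\NSE{Q}(B\cap A_i)$ is either $\NSE{Q}(A_i)$ or $0$; the level set $\{\phi>K\}$ is automatically of this form. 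The paper instead writes $\int_{A}\phi\,\dee\NSE{Q}=\sum_{i}\NSE{P}(A\cap A_i)$ for an \emph{arbitrary} internal $A$ with $\NSE{Q}(A)\approx 0$, and this equality is not exact in general (it would require $\NSE{Q}(A\cap A_i)/\NSE{Q}(A_i)=\NSE{P}(A\cap A_i)/\NSE{P}(A_i)$ cell-by-cell); your version of the identity is the one that holds without any approximation argument. Your Markov-style bound $\NSE{Q}(\{\phi>K\})\leq 1/K$ is also exactly what the paper implicitly uses to conclude that $\ST(\phi)$ exists $\Loeb{\NSE{Q}}$-a.e.\ (the paper dispatches this with a one-line ``$\phi\geq 0$'' remark), so you have made that step explicit as well. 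In short: same tool (\cref{Lintegral}), different criterion, and your choice of criterion lets the key identity be used only in the regime where it is literally true.
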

\begin{proof}
It is straightforward to verify that $\int_{\NSE{Y}}\phi(x) \NSE{Q}(\dee x)=1$.
As $\phi(x)\geq 0$, we know that $\phi(x)\in \NS{\NSE{\Reals}}$ for $\Loeb{\NSE{Q}}$-almost all $x\in X$. 
Pick a set $A\in \NSE{\BorelSets Y}$ such that $\NSE{Q}(A)\approx 0$. 
By domination, we know that $\NSE{P}(A) \approx 0$. Thus, we have
\[
\int_{A} \phi(x) \NSE{Q}(\dee x)=\sum_{i=1}^{T}\NSE{P}(A\cap A_i)\leq \NSE{P}(A)\approx 0.
\]
Thus, by \cref{Lintegral}, $\phi$ is S-integrable with respect to $Q$. 
\end{proof} 

We now show that $\phi$ defined in \cref{hyradonint} is infinitesimally close to the standard Radon-Nidodym derivative. 

\begin{theorem}\label{stradon}
Let $P,Q$ be two probability measures on $(Y,\BorelSets Y)$.
Suppose $P$ is dominated by $Q$.
Let $f$ denote the density function of $P$ with respect to $Q$.
Suppose $f$ is continuous at $y_0\in Y$. 
Let $\phi$ be the internal function defined in \cref{defhyradon}. Then $\phi(x)\approx \NSE{f}(y)$ for $x,y\in \NSE{Y}$ with $x\approx y\approx y_0$. 
\end{theorem}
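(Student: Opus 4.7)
The plan is to reduce the claim to a local observation: over the cell $A_{i_0}$ of the hyperfinite partition containing $x$, the ratio $\phi(x) = \NSE{P}(A_{i_0})/\NSE{Q}(A_{i_0})$ is essentially the average of the density $\NSE{f}$ on $A_{i_0}$, and both this average and the pointwise value $\NSE{f}(y)$ must be close to $f(y_0)$, the first because $\NSE{f}$ is almost constant on $A_{i_0}$ and the second by direct transfer of continuity.

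First I would fix $x, y \in \NSE{Y}$ with $x \approx y \approx y_0$ and let $A_{i_0}$ denote the unique cell of the partition that contains $x$. Since $\mathrm{diam}(A_{i_0})$ is infinitesimal, every $z \in A_{i_0}$ satisfies $z \approx x \approx y_0$, so the entire cell sits inside the monad of $y_0$. Transferring the continuity of $f$ at $y_0$, for every standard $\epsilon > 0$ there exists a standard $\delta > 0$ with $|\NSE{f}(z) - f(y_0)| < \epsilon$ whenever $|z - y_0| < \delta$. Infinitesimal distances are smaller than every such standard $\delta$, so $\NSE{f}(z) \approx f(y_0)$ for all $z \in A_{i_0}$ and, since $y \approx y_0$, also $\NSE{f}(y) \approx f(y_0)$.

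Next I would invoke the defining relation of the Radon-Nikodym derivative. Transfer of $P(B) = \int_B f \, dQ$ yields $\NSE{P}(A_{i_0}) = \int_{A_{i_0}} \NSE{f}(z) \, \NSE{Q}(\dee z)$, and combining with the pointwise bound above,
\begin{equation*}
|\NSE{P}(A_{i_0}) - f(y_0) \cdot \NSE{Q}(A_{i_0})| \leq \epsilon \cdot \NSE{Q}(A_{i_0})
\end{equation*}
for every standard $\epsilon > 0$. When $\NSE{Q}(A_{i_0}) > 0$, dividing by $\NSE{Q}(A_{i_0})$ gives $|\phi(x) - f(y_0)| \leq \epsilon$; arbitrariness of $\epsilon$ then yields $\phi(x) \approx f(y_0) \approx \NSE{f}(y)$, which is the desired conclusion.

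The step I expect to require care is the degenerate case $\NSE{Q}(A_{i_0}) = 0$, where $\phi(x) = 0$ by the convention implicit in \cref{hyradonint} and the density relation also forces $\NSE{P}(A_{i_0}) = 0$. In that case the monad of $y_0$ carries no $\NSE{Q}$-mass, so $y_0$ lies outside the support of $Q$ and the Radon-Nikodym derivative at $y_0$ is pinned down only by the continuity hypothesis. The cleanest resolution is to restrict attention to $y_0 \in \mathrm{supp}(Q)$, or to adopt the convention $f(y_0) = 0$ outside $\mathrm{supp}(Q)$ (which is consistent with continuity since $f$ may be taken to vanish on the open complement of the support); the substantive content of the theorem is captured by the non-degenerate case handled above.
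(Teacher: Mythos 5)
Your proof is correct and takes essentially the same approach as the paper: transfer the Radon--Nikodym relation $\NSE{P}(A_{i_0}) = \int_{A_{i_0}}\NSE{f}\,\dee\NSE{Q}$, use continuity of $f$ at $y_0$ to make $\NSE{f}$ nearly constant on the cell, and exploit the constancy of $\phi$ on $A_{i_0}$ to divide by $\NSE{Q}(A_{i_0})$ and match $\phi(x)$ with $f(y_0) \approx \NSE{f}(y)$.

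One small caveat on your discussion of the degenerate case: the inference ``$\NSE{Q}(A_{i_0})=0$ $\Longrightarrow$ the monad of $y_0$ carries no $\NSE{Q}$-mass $\Longrightarrow$ $y_0\notin\mathrm{supp}(Q)$'' is not valid. The vanishing of $\NSE{Q}$ on one infinitesimal cell near $y_0$ says nothing about the cell actually containing $y_0$ or about standard neighborhoods; for instance, with $Q=P=\delta_{1/2}$ and $f\equiv 1$ (continuous everywhere), a cell $A_{i_0}$ in the monad of $1/2$ that misses $1/2$ has $\NSE{Q}(A_{i_0})=0$ while $y_0=1/2\in\mathrm{supp}(Q)$ and $f(y_0)=1$. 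So the proposed restriction to $y_0\in\mathrm{supp}(Q)$ is not by itself enough to salvage the degenerate case. The paper's own proof silently presumes $\NSE{Q}(A_i)>0$ and does not address this either, so you have correctly flagged a real edge case; just be aware that your diagnosis of when it occurs, and hence the proposed fix, is off.
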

\begin{proof}
Pick $A_i$ such that every point in $A_i$ is infinitely close to $y_0$. 
Note that $\phi$ is constant on $A_i$. 
By the transfer principle, we know that $\NSE{P}(A_i)=\int_{A_i}\NSE{f}(x) \NSE{Q}(\dee x)$ for every $1\leq i\leq T$. 
We also know that $\NSE{P}(A_i)=\int_{A_i} \phi(x) \NSE{Q}(\dee x)$. 
So we have $\int_{A_i} [\NSE{f}(x)-\phi(x)]\NSE{Q}(\dee x)=0$. 
As $\phi$ is constant on $A_i$ and $f$ is continuous at $y_0$, we know that $\NSE{f}(x)\approx \phi(x)$ for all $x\in A_i$. 
By the continuity of $f$ at $y_0$ again, we have $\NSE{f}(x)\approx \phi(y)$ for all $x,y\in \NSE{Y}$ with $x\approx y\approx y_0$.
\end{proof}

\cref{stradon} implies that $\phi(x)=f(\ST(x))$ provided that the density function $f$ is continuous at $\ST(x)$. 
In fact, by \cref{hyradonint} and \cref{stradon}, it can be shown that $\ST(\phi(x))$ ($x\in Y$) is a density function with respect to $Q$ (\citep[][Thm.~3.6]{zimradon}) provided that $\ST(\phi)$ is constant on monads.  

\subsection{Density Functions For Markov Processes}\label{sectiondfmarkov}
Let $\{g(x,1,\cdot)\}_{x\in X}$ be a Markov transition kernel with stationary distribution $\pi$ on compact metric space $X$ endowed with Borel $\sigma$-algebra $\BorelSets X$. Let $(S_X, \{B(s): s\in S_X\})$ be a hyperfinite representation of $X$. We use $G, H,\Pi$ to denote the corresponding hyperfinite objects as defined in \cref{hyprocess}. 

\begin{assumption}\label{assumptionde}
There exists a referencing measure $\lambda$ on $(X, \BorelSets X)$ 
such that $\pi$ is dominated by $\lambda$ and the family $\{g(x,1,\cdot)\}_{x\in X}$ is dominated by $\lambda$.
\end{assumption}

We use $f(x,\cdot)$ to denote a density function of $g(x,1,\cdot)$ with respect to $\lambda$ and use $k(\cdot)$ to denote the density function of $\pi$ with respect to $\lambda$. 
%
%

\begin{theorem}\label{markovdensity}
Suppose \cref{assumptionde} holds.
Suppose $k$ is continuous at $x_1\in X$ and $f$ is jointly continuous at $(x_1, x_2)$ for some $x_2\in X$.                     
Then, for $i,j\in S_X$ such that $i\approx x_1$ and $j\approx x_2$, we have $\frac{\Pi(\{i\})}{\NSE{\lambda}(B(i))}\approx \NSE{k}(i)$ and
$\frac{G_{i}(\{j\})}{\NSE{\lambda}(B(j))}\approx \NSE{f}(i,j)\approx \frac{H_{i}(\{j\})}{\NSE{\lambda}(B(j))}$. 
\end{theorem}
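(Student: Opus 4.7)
The plan is to handle the stationary-distribution ratio by a direct appeal to \cref{stradon}, and then handle the two kernel ratios by a direct computation that exploits joint continuity of $f$ at $(x_1,x_2)$. \cref{stradon} is not directly applicable to the kernel, because $g(x,1,\cdot)$ varies with $x$, so I will need to go back to the integral representation.

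For the first claim, $\tfrac{\Pi(\{i\})}{\NSE{\lambda}(B(i))}\approx \NSE{k}(i)$, I would take the partition $\{B(s):s\in S_X\}$ of $\NSE{X}$ and form the internal function $\phi$ of \eqref{defhyradon} with $P=\pi$, $Q=\lambda$. By construction, $\phi(x)=\tfrac{\NSE{\pi}(B(i))}{\NSE{\lambda}(B(i))}=\tfrac{\Pi(\{i\})}{\NSE{\lambda}(B(i))}$ for every $x\in B(i)$. Since $k$ is continuous at $x_1$, \cref{stradon} gives $\phi(x)\approx \NSE{k}(y)$ whenever $x\approx y\approx x_1$; specializing to $x=y=i$ yields the claim.

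For the second claim, I would use that $f(x,\cdot)$ is the density of $g(x,1,\cdot)$ with respect to $\lambda$, so by transfer $\NSE{g}(i,1,B(j))=\int_{B(j)}\NSE{f}(i,y)\,\NSE{\lambda}(\dee y)$. Joint continuity of $f$ at $(x_1,x_2)$ together with transfer yields the nonstandard formulation: whenever $x\approx x_1$ and $y\approx x_2$, we have $\NSE{f}(x,y)\approx f(x_1,x_2)$. Since $B(j)$ has infinitesimal diameter and $j\approx x_2$, every $y\in B(j)$ satisfies $y\approx x_2$, so for every standard $\epsilon>0$ I can use the joint continuity to bound
\[
\bigl|\NSE{g}(i,1,B(j))-f(x_1,x_2)\NSE{\lambda}(B(j))\bigr|\le \int_{B(j)}\bigl|\NSE{f}(i,y)-f(x_1,x_2)\bigr|\NSE{\lambda}(\dee y)<\epsilon\,\NSE{\lambda}(B(j)).
\]
Dividing by $\NSE{\lambda}(B(j))$ and letting $\epsilon\downarrow 0$ gives $\tfrac{G_i(\{j\})}{\NSE{\lambda}(B(j))}\approx f(x_1,x_2)\approx \NSE{f}(i,j)$.

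For the third claim, I unfold the definition $H_i(\{j\})=\tfrac{1}{\NSE{\pi}(B(i))}\int_{B(i)}\NSE{g}(x,1,B(j))\,\NSE{\pi}(\dee x)$, and apply the previous bound pointwise: for every $x\in B(i)$ we have $x\approx x_1$, so the same computation yields $|\NSE{g}(x,1,B(j))-f(x_1,x_2)\NSE{\lambda}(B(j))|<\epsilon\,\NSE{\lambda}(B(j))$ uniformly in $x\in B(i)$ for each standard $\epsilon$. Averaging this bound against $\NSE{\pi}(\dee x)/\NSE{\pi}(B(i))$ and dividing by $\NSE{\lambda}(B(j))$ produces $\tfrac{H_i(\{j\})}{\NSE{\lambda}(B(j))}\approx f(x_1,x_2)\approx \NSE{f}(i,j)$. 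The main small obstacle is the degenerate case $\NSE{\lambda}(B(j))=0$: here domination forces both $\NSE{g}(x,1,B(j))=0$ and $\NSE{\pi}(B(j))=0$ for all $x$, so both ratios are $0/0$ and can be set to $0$ consistently (alternatively, one treats only $j$ with $\NSE{\lambda}(B(j))>0$, which is the meaningful content). Apart from this, the only nontrivial ingredient is the uniformity extracted from joint continuity via transfer, which is exactly the strengthening over separate continuity that the hypothesis supplies.
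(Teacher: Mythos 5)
Your proposal is correct and takes essentially the same approach as the paper: \cref{stradon} for the stationary-density ratio, transfer plus joint continuity of $f$ at $(x_1,x_2)$ for the $G$ ratio, and averaging the pointwise bound over $B(i)$ against $\NSE{\pi}(\dee x)/\NSE{\pi}(B(i))$ for the $H$ ratio. One degenerate case you should also flag is $\NSE{\pi}(B(i))=0$: there the definition in \cref{closereverse} sets $H_i(\{j\})=G_i(\{j\})$ rather than the integral average you unfold, so the $H$ claim reduces immediately to the $G$ claim---the paper's proof addresses this case separately (modulo a typo there that writes $\NSE{\pi}(B(j))$ where $\NSE{\pi}(B(i))$, i.e.\ $\NSE{\pi}(B(s))$ in its notation, is intended).
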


\begin{proof}
Clearly $\{B(s): s\in S_X\}$ is a hyperfinite partition of $\NSE{X}$ such that each $B(s)$ has infinitesimal diameter. 
As $k(\cdot)$ is continuous at $x_1$, by \cref{stradon}, we have
\[
\frac{\NSE{\pi}(B(i))}{\NSE{\lambda}(B(i))}=\frac{\Pi(\{i\})}{\NSE{\lambda}(B(i))}\approx \NSE{k}(i)
\] 
for all $i\approx x_1$. 

Pick $s,j\in S$ such that $s\approx x_1$ and $j\approx x_2$. By the transfer principle, we know that 
\[
G_{s}(\{j\})=\NSE{g}(s,1, B(j))=\int_{B(j)}\NSE{f}(s, x)\NSE{\lambda}(\dee x).
\]
On the other hand, we also know that 
\[
G_{s}(\{j\})=\NSE{g}(s, 1, B(j))=\int_{B(j)}\frac{G_{s}(\{j\})}{\NSE{\lambda}(B(j))}\NSE{\lambda}(\dee x).
\]
As $f$ is jointly continuous at $(x_1, x_2)$, we know that $\NSE{f}(s, x)\approx \NSE{f}(s, y)\approx f(\ST(s), \ST(j))$ for all $x, y\in B(j)$.
This implies that $\frac{G_{s}(\{j\})}{\NSE{\lambda}(B(j))}\approx \NSE{f}(s, j)$.

If $\NSE{\pi}(B(j))=0$, then 
\[
\frac{H_{s}(\{j\})}{\NSE{\lambda}(B(j))}=\frac{G_{s}(\{j\})}{\NSE{\lambda}(B(j))}\approx \NSE{f}(s, j).
\]
If $\NSE{\pi}(B(j))\neq 0$, by the joint continuity of $f$ at $(x_1, x_2)$, we have
\[
\frac{H_{s}(\{j\})}{\NSE{\lambda}(B(j))}&=\frac{\int_{B(s)}\frac{\NSE{g}(x,1,B(j))}{\NSE{\lambda}(B(j))}\NSE{\pi}(\dee x)}{\NSE{\pi}(B(s))}\\
&\approx \frac{\int_{B(s)}\NSE{f}(x,j)\NSE{\pi}(\dee x)}{\NSE{\pi}(B(s))}\approx \NSE{f}(s,j).
\]
Thus, we have the desired result. 

\end{proof}

\section{Comparison Theorem for General Markov Processes}\label{sec1dim}

Our main motivation for constructing a hyperfinite representation of the Dirichlet form is to allow us to directly translate theorems about transition kernels on finite-dimensional spaces to theorems on more general state space. As an illustration, we provide a simple translation of a ``comparison" theorem for Markov chains on finite state spaces (see the original Theorem \ref{finitecomparison} and our generalization Theorem \ref{mainthm} ). We also give a quick explanation, including an example, as to why \textit{direct} translation does not work well for this problem. These bounds will be more fully developed in our companion paper focused on comparison.

\subsection{Original  Comparison Bound }

We introduce comparison methods, following the same presentation as in \citep{markovmix}. For a finite reversible transition kernel $\{P(x,y)\}_{x,y\in \Omega}$ with stationary distribution $\pi$, let $E=\{(x,y): P(x,y)>0\}$. An $E$-path from $x$ to $y$ is a sequence $\Gamma=(e_1,e_2,\dotsc,e_n)$ of edges in $E$ such that $e_1=(x,x_1),e_2=(x_1,x_2),\dotsc, e_n=(x_{n-1},y)$ for some vertices $x_1,x_2,\dotsc,x_{n-1}\in \Omega$. We use $|\Gamma|$ to denote the length of a path $\Gamma$ and let $Q(x,y)=\pi(x)P(x,y)$ for $x,y\in \Omega$. 

Let $P$ and $\alt{P}$ be two finite reversible transition kernels with stationary distribution $\pi$ and $\alt{\pi}$, respectively. Suppose we can fix an $E$-path from $x$ to $y$ for every $(x,y)\in \alt{E}$ and denote it by $\gamma_{xy}$. Given such a choice of path, we define the \emph{congestion ratio} by 
\[\label{defepath}
B=\max_{e\in E} \left( \frac{1}{Q(e)}\sum_{(x,y): e\in \gamma_{xy}}\alt{Q}(x,y)|\gamma_{xy}| \right). 
\]

\begin{theorem}[{\citep[][Thm.~13.23]{markovmix}}]\label{finitecomparison}
Let $P$ and $\alt{P}$ be finite reversible transition kernels with stationary distributions $\pi$ and $\alt{\pi}$ respectively. If $B$ is the congestion ratio for a choice of $E$-paths, as defined in \cref{defepath}, then
\[
\Diric{\alt{P}}{f}{f}\leq B\Diric{P}{f}{f}.
\]
\end{theorem}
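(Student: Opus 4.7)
The plan is to give the classical ``path-comparison'' argument of Diaconis--Saloff-Coste / Sinclair, which works entirely at the level of edge-sums and hence is pure linear algebra on the finite state space $\Omega$. Reversibility enters only to make the Dirichlet form symmetric, so that $\Diric{\alt{P}}{f}{f}=\tfrac12\sum_{(x,y)\in \alt E}[f(y)-f(x)]^{2}\alt Q(x,y)$ with $\alt Q(x,y)=\alt\pi(x)\alt P(x,y)=\alt\pi(y)\alt P(y,x)$, and similarly for $P$. I would carry out the proof in three short steps.

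\textbf{Step 1 (telescoping along paths).} For each directed edge $(x,y)\in\alt E$ I use the fixed path $\gamma_{xy}=(e_1,\dots,e_{|\gamma_{xy}|})\subset E$ to write
\[
f(y)-f(x)\;=\;\sum_{e=(u,v)\in \gamma_{xy}}\bigl(f(v)-f(u)\bigr),
\]
and apply Cauchy--Schwarz to the right-hand side to obtain
\[
[f(y)-f(x)]^{2}\;\le\;|\gamma_{xy}|\sum_{e=(u,v)\in \gamma_{xy}}[f(v)-f(u)]^{2}.
\]

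\textbf{Step 2 (swap the order of summation).} Plugging the above inequality into the expression for $\Diric{\alt P}{f}{f}$ and exchanging the two sums yields
\[
\Diric{\alt P}{f}{f}\;\le\;\frac12\sum_{(x,y)\in\alt E}|\gamma_{xy}|\,\alt Q(x,y)\sum_{e=(u,v)\in\gamma_{xy}}[f(v)-f(u)]^{2}
\;=\;\frac12\sum_{e=(u,v)\in E}[f(v)-f(u)]^{2}\!\!\sum_{(x,y)\,:\,e\in\gamma_{xy}}\!\!|\gamma_{xy}|\,\alt Q(x,y).
\]

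\textbf{Step 3 (invoke the congestion ratio).} For every edge $e\in E$ the inner sum is at most $B\cdot Q(e)$ by the definition \eqref{defepath}, so
\[
\Diric{\alt P}{f}{f}\;\le\;B\cdot\frac12\sum_{e=(u,v)\in E}[f(v)-f(u)]^{2}Q(e)\;=\;B\,\Diric{P}{f}{f},
\]
which is the desired inequality.

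There is no real obstacle: the only genuinely non-trivial input is the Cauchy--Schwarz step in Step~1, whose $|\gamma_{xy}|$ factor is precisely what justifies the appearance of $|\gamma_{xy}|$ inside the definition of $B$. The swap in Step~2 is a routine double-counting (each edge $e\in E$ is summed over all pairs whose chosen path passes through it), and reversibility is used only to make sure both Dirichlet forms admit the symmetric edge representation needed to compare term by term.
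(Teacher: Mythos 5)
Your proof is correct and is exactly the standard Diaconis--Saloff-Coste / Sinclair path-comparison argument (telescope, Cauchy--Schwarz with the path length as the constant, swap sums, bound by the congestion ratio), which is the argument given in the cited source {\citep[][Thm.~13.23]{markovmix}}; the paper itself does not reprove this theorem but simply quotes it, so there is nothing to compare beyond noting you have reproduced the canonical proof faithfully.
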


The aim of this section is to extend  \cref{finitecomparison} to a large class of general Markov Processes.

\subsection{Non-Example: Barbell Graph} \label{SecBarbellEx1}

Before giving our new theorem, we give a quick example that illustrates why \textit{directly} mimicking the form of the theorem for finite Markov chains will not work well. To understand the basic issue, note that we can view the bound in \cref{finitecomparison} as coming from three terms: the length $|\gamma_{xy}|$ of a path, the relative probability $\frac{\alt{Q}(x,y)}{Q(e)}$ of transitions along the path, and the congestion $|\{(x,y) \, : \, e \in \gamma_{xy}\}|$. The length makes sense as written for general state spaces, and the relative probability can easily be ``translated" to \textit{e.g.} the Radon-Nikodym derivative. However, it is not clear how to translate the notion of congestion, which measures how many paths go \textit{exactly} through an edge. The problem is that it is often simple to very slightly deform paths in continuous state spaces so that the congestion remains extremely small (or even 1), even though a very large number of paths are extremely close to an edge.

We give a concrete example. Denote by $K_{n}$ the usual complete graph on $n$ vertices and let $G_{n} = (V_{n},E_{n})$ be the ``barbell" graph obtained by taking two copies $R_{n},L_{n}$ of $K_{n}$ and adding a single edge; denote the endpoints of this new edge by $r_{n} \in R_{n}$ and $\ell_{n} \in L_{n}$. Denote by $\{X_{t}\}$ the usual simple random walk on this graph, with kernel $A_{n}$. We study the following natural collection of paths:

\begin{enumerate}
\item For $x,y \in L_{n}$ (or $x,y \in R_{n}$), simply take the edge from $x$ to $y$.
\item For $x \in L_{n}$ and $y \notin L_{n}$, take the edge $(x,\ell_{n})$ then the edge $(\ell_{n},r_{n})$ and then the edge $(r_{n},y)$. At the end, delete any self-edges that may appear. 
\item For $x \in R_{n}$ and $y \notin R_{n}$, do the obvious analogue to path \textbf{(2)}.
\end{enumerate}

It is straightforward to check that:
\begin{enumerate}
\item All paths are of length at most 3,
\item All transition probabilities are either $n^{-1}$ or $(n-1)^{-1}$ (so always roughly $n^{-1})$,
\item The stationary measure at any point is either $\frac{n}{2( (n-1)^{2} + n)}$ or $\frac{n-1}{2( (n-1)^{2} + n)}$ (so again roughly $n^{-1}$), and
\item The edge $(r_{n},\ell_{n})$ appears in roughly $n^{2}$ paths, which is (vastly) more than any other edge.
\end{enumerate}

Putting together these estimates and comparing to the kernel that simply takes i.i.d. samples from the stationary measure, \cref{finitecomparison} tells us that the relaxation time of $A_{n}$  is $O(n^{2})$ (and so the mixing time is $O(n^{2} \log(n))$). In fact it is straightforward to check that both the relaxation and mixing times are $\Theta(n^{2})$. Sketching the relevant arguments: the matching lower bound on the relaxation time can be obtained from Cheeger's inequality; upper and lower bounds on the mixing time can be obtained by checking that it takes $\Theta(n^{2})$ steps to travel from $L_{n} \backslash \{ \ell_{n}\}$ to $R_{n} \backslash \{ r_{n} \}$ on average (and, to obtain the upper bound, a straightforward coupling argument).

We now consider a continuous analogue. Define $S_{n} = \cup_{v \in V_{n}} I_{v}$, where $I_{v}$ are disjoint copies of $[0,1]$. For $v \in V_{n}$ and $y \in I_{v} \subset S_{n}$, write $V(y) = v$ for the vertex associated with $y$ and $|y|$ for the value of $y$ in $I_{v}$. Then define the transition kernel $Q_{n}$ on $S_{n}$ by the following algorithm for sampling $Y \sim Q_{n}(x,\cdot)$:

\begin{enumerate}
\item Sample $u \sim X_{n}(V(x),\cdot)$.
\item Return $Y \sim \mathrm{Unif}(I_{u})$.
\end{enumerate}

 Note that this walk can be written as a product walk for the original simple random walk on $G_{n}$ and the kernel whose measures are all $\mathrm{Unif}[0,1]$. In particular, this means it has the same mixing time and relaxation times as $\{X_{t}\}$. 

Attempting to confirm this with a naive version of \cref{finitecomparison} runs into an immediate problem. Roughly speaking, the problem comes from the fact that on continuous spaces we can ``split" the heavily-used vertices $\ell_{n}, r_{n}$ into many pieces, and this allows paths travelling between these vertices to avoid each other. Making this split precise requires some additional notation; the following somewhat-heavy notation is reused in Section \ref{SecBarbellEx2}. 

Let $N_{L,n} \, : \, L_{n} \backslash \{ \ell_{n} \} \mapsto \{1,2,\ldots,n-1\}$ be an arbitrary ordering of the vertices in $L_{n} \backslash \{ \ell_{n} \}$, and similarly let $N_{R,n}$ be an ordering of  $R_{n} \backslash \{ r_{n} \}$. For $u \in L_{n} \backslash \{\ell_{n}\}$, let $I_{\ell_{n}}(u)$ be a copy of $[0,1]$ and for $t \in [0,1]$ let $I_{\ell_{n}}(u,t) = t \in I_{\ell_{n}}(u)$. Let $I_{r_{n}}$ be the obvious analogous construction on $R_{n}$. Define 

\[ 
S_{n}' = \left(\sqcup_{v \in V_{n} \backslash\{\ell_{n},r_{n}\}} I_{v} \right) \sqcup \left( \sqcup_{v \in L_{n} \backslash \{\ell_{n}\}} I_{\ell_{n}}(v) \right) \sqcup \left(\sqcup_{v \in R_{n} \backslash \{r_{n}\}} I_{r_{n}}(v) \right); 
\] 
that is, we take $S_{n}$ and split the interval $I_{\ell_{n}}$ into $n-1$ intervals $\sqcup_{v \in L_{n} \backslash \{\ell_{n}\}} I_{\ell_{n}}(u)$, and make the analogous split for $I_{r_{n}}$. Define the map $\phi \, : \, S_{n}' \mapsto S_{n}$ by the following equations:

\begin{align*}
\phi(y) &= y, \qquad \qquad \qquad \qquad \qquad \qquad \, \,  V(y) \in V_{n} \backslash \{ \ell_{n}, r_{n} \} \\
V(\phi(y)) &= \ell_{n}, \qquad \qquad \qquad \qquad \qquad \qquad y \in \cup_{v \in L_{n} \backslash \{ \ell_{n} \}} I_{\ell_{n}}(v) \\
|\phi(y)| &= \frac{|y|}{n-1} + \frac{N_{n}(v) - 1}{n-1}, \qquad \qquad \, \, y \in L_{n}(v), 
\end{align*}
with the obvious analogues to the last two equations when $ y \in \cup_{v \in R_{n} \backslash \{ r_{n} \}} I_{r_{n}}(v)$. We note that, ignoring a finite number of points, $\phi$ is a bijection between $S_{n}$ and $S_{n}'$; ignoring events of measure 0, $Y_{t}' \equiv \phi^{-1}(Y_{t})$ defines a Markov chain with the same relaxation and mixing times as $\{Y_{t}\}$. For the remainder of this section we will consider this process on $S_{n}'$, and write $Q_{n}'$ for its kernel.

We now consider the following path from $x \in I_{x}$ to $y \in I_{y}$:

\begin{enumerate}
\item If $V(x),V(y) \in L_{n}$ (or $V(x),V(y) \in R_{n}$), simply take the edge from $x'$ to $y'$.
\item For $V(x) \in L_{n}$ and $V(y) \notin L_{n}$, take the edge from $x' \in I_{x}$ to $x'' \equiv I(\ell_{n},|x'|) \in I_{\ell_{n}}$, then the  edge from $x''$ to $y'' \equiv I(r_{n},|y'|)$, and then the edge from $y''$ to $y'$. At the end, delete any self-edges that may appear. 
\item For $V(x) \in R_{n}$ and $V(y) \notin R_{n}$, do the obvious analogue to path \textbf{(2)}.
\end{enumerate}

We again try to analyze these paths. All of the earlier comments apply, \textbf{except} that \textit{no} edges are used more than $O(1)$ times! This occurs because we have managed to ``clone" $\ell_n, r_n$ into effectively $(n-1)$ vertices, resulting in effectively $(n-1)^{2}$ edges between them; this exactly matches the $\Theta(n^{2})$ congestion of the original discrete path. Thus, applying  the formula in \cref{finitecomparison} naively would give a relaxation time that is $O(1)$, which is not correct.

As discussed at the start of the section, the problem is that the formula of \cref{finitecomparison} counts how many paths \textit{exactly} reuse an edge, while the current continuous example merely \textit{compresses} edges without \textit{exactly reusing} them. Earlier work \citet{yuen2000applications,yuen2002generalization} adjusted for this compression by associating a Jacobian to its paths. Unfortunately, this approach seems to be quite restrictive. The obvious problem is that this approach requires paths to be sufficiently smooth. A less-obvious problem is that the method often gives very poor bounds. This happens even when (as in our barbell example) the continuous chain of interest has a nearly-identical continuous analogue for which comparison inequalities give nearly-sharp answers; see  the original papers \cite{yuen2000applications,yuen2002generalization} for examples of this phenomenon.

In section \ref{SecBarbellEx2}, we analyze $Q_{n}$ using our new theorem (and the same path) and obtain the correct answer.

\subsection{Hyperfinite Approximation of Congestion Ratio} \label{SecHypCon}

In this section, we will consider Markov chains on a state space of the form $X = \sqcup_{i =1}^{I} X_{i}$, where $X_{i}$ is of the form $[0,1]^{k_{i}}$ and the notation ``$\sqcup$" is used to emphasize the fact that these are disjoint copies of what may be the same space; we use the word ``component" to refer to such a copy, so that e.g. $X_{1}$ is one component of $X$. We now define a metric $d$ on $X$ as following: 
\begin{enumerate}
\item For every $i \in \{1,2,\ldots,I\}$ and every $x,y\in X_i$, define $d(x, y)$ to be the standard Euclidean distance between $x$ and $y$. 

\item For $x\in X_{i_1}$ and $y\in X_{i_2}$ where $i_1\neq i_2$, define $d(x, y)= 1 + \max_{i\in I}\sqrt{k_{i}}$. 

\end{enumerate}
It is easy to verify that $d$ is a metric on $X$ with the following property: 
\[ 
\inf_{i \neq j \in I} \inf_{x \in X_{i}, y \in X_{j}} d(x,y) > \sup_{i \in I} \sup_{x,y \in X_{i}} d(x,y).
\] 
In other words, every point in one ``part" $X_{i}$ is closer to \textit{every} other point in the same part than it is to \textit{any} point in \textit{any} other part $X_{j}$. We will always use the completion of the Borel $\sigma$-algebra on $X$ associated with $d$.

We start by constructing a specific hyperfinite representation $S$ of $X$. Pick an infinite $K\in \NSE{\Nats}$ and let $\delta t=\frac{1}{K!}$. Define $S=\{0,\delta t, 2\delta t,\dotsc, 1-\delta t\}$, $S_{i} = S^{d_{i}}$, and $S_X=\sqcup_{i=1}^{I} S_{i}$. For every $1 \leq i \leq I$ and $s\in S_i$, $B_{i}(s) \subset S_{i}$ is defined to be the small rectangle with length equals to $\delta t$ on each side and containing $s$ as its left lower point. Let $\lambda_{i}$ denote the Lebesgue measure on $X_{i}$. Then the hyperfinite representations $(S_i, \{B_{i}(s)\}_{s\in S_i})$ are uniform in the sense that $\NSE{\lambda_{i}}(B_{i}(s_1))=\NSE{\lambda_{i}}(B_{i}(s_2))$ for $s_1,s_2\in S_{i}$. 

We work with two transition kernels $\{g(x,1,\cdot)\}_{x\in X}$ and $\{\alt{g}(x,1,\cdot)\}_{x\in X}$ on $X$ with stationary distributions $\pi$ and $\alt{\pi}$, respectively. Denote by $\lambda$ the natural extension of the Lebesgue measure to $X$ defined by the following formula: for Lebesgue-measurable $A_{1} \subset X_{1},\ldots,A_{I} \subset X_{I}$, set 
\[ 
\lambda(A_{1}\cup\ldots\cup A_{I}) = \lambda_{1}(A_{1}) + \ldots + \lambda_{I}(A_{I}),
\] 
where $\lambda_{i}$ is the usual Lebesgue measure on $X_{i}$.  We assume that $\{g(x,1,\cdot)\}_{x\in X}$, $\{\alt{g}(x,1,\cdot)\}_{x\in X}$, $\pi$, $\alt{\pi}$ are dominated by $\lambda$. Let $k(\cdot), \alt{k}(\cdot)$ denote the density functions for $\pi, \alt{\pi}$, respectively. Let $f(x,\cdot), \alt{f}(x,\cdot)$ denote the density functions for $g(x,1,\cdot), \alt{g}(x,1,\cdot)$, respectively. We now extend notions of path and edge from finite Markov processes to general Markov processes. 

Let $E=\{(x,y): f(x,y)>0\}$. An $E$-path from $x$ to $y$ is a sequence $(e_1,e_2,\dotsc,e_n)$ of edges in $E$ such that $e_1=(x,x_1),e_2=(x_1,x_2),\dotsc, e_n=(x_{n-1},y)$ for some vertices $x_1,x_2,\dotsc,x_{n-1}\in X$. Let $Q(x,y)=k(x)f(x,y)$ for $x,y\in \Omega$. $\alt{E}$ and $\alt{Q}$ are defined similarly. 
For $(x,y)\in X^2$, we assume that there exists an $E$-path connecting $x$ and $y$. We fix such a $E$-path denote it by $\gamma_{xy}$. Let $\allpath=\{\gamma_{xy}: (x,y)\in X^2\}$ be the collection of all such paths. We are, of course, really interested in $\PathSpace=\{\gamma_{xy}: (x,y)\in \alt{E}\}$. Note that we can view $\gamma$ as a function from $X^2$ to $\allpath$. The nonstandard counterparts of $E, Q, \alt{E}, \alt{Q}, \allpath, \PathSpace$ are defined as nonstandard extensions of these objects. By the transfer principle, $\NSE{\gamma}$ is a function from $\NSE{X^2}$ to $\NSE{\allpath}$. To associate a nonstandard edge (path) with a standard edge (path), we need to define the distance between edges and paths.  Abusing notation slightly, we write:

\begin{definition}\label{defedgedis}
Let $e_1=(x_1,y_1)$ and $e_2=(x_2,y_2)$ be two edges in $E$ or $\alt{E}$. 
The distance between $e_1,e_2$ is defined to be $\metric(e_1,e_2)=\max\{d(x_{1}, x_{2}), \, d(y_{1}, y_{2})\}$. 

Let $\gamma_{a_{1}b_{1}}$ and $\gamma_{a_{2}b_{2}}$ be two paths in $\PathSpace$. 
The distance between $\gamma_{a_{1}b_{1}}$ and $\gamma_{a_{2}b_{2}}$ is defined to be 
$\metric(\gamma_{a_{1}b_{1}},\gamma_{a_{2}b_{2}})=\max\{d(a_{1},a_{2}), \, d(b_{1},b_{2})\}$ - that is, on paths  $W$ is a pseudometric on paths which only pays attention to the endpoints.
\end{definition}

The distance between nonstandard edges and paths can be defined similarly. 
Two nonstandard edges (paths) are infinitely close to each other if and only if both their start and end points are infinitely close to each other. We write $e_1\approx e_2$ (respectively $\gamma_1\approx \gamma_2$) if $\NSE{\metric}(e_1,e_2)\approx 0$ ( respectively $\NSE{\metric}(\gamma_{1},\gamma_{2})\approx 0$) for two nonstandard edges (paths) $e_1, e_2$ ($\gamma_{1},\gamma_{2}$).

We shall use $\{G_{i}(\cdot)\}_{i\in S_X}$, $\{\alt{G}_{i}(\cdot)\}_{i\in S_X}$, $\{H_{i}(\cdot)\}_{i\in S_X}$, $\{\alt{H}_{i}(\cdot)\}_{i\in S_X}$, $\Pi(\cdot)$ and $\alt{\Pi}(\cdot)$ to denote various corresponding hyperfinite objects defined in \cref{hyprocess}. We now define the hyperfinite counterparts of $E, Q, \alt{E}, \alt{Q}, \PathSpace$. Let $\hyper{E}=\{(i,j): H_{i}(\{j\})>0\}$. Note that $\hyper{E}$ is a subset of $S_X\times S_X$. An $\hyper{E}$-path from $i$ to $j$ is a sequence $(e_1,e_2,\dotsc,e_n)$ of edges in $\hyper{E}$ such that $e_1=(i,i_1),e_2=(i_1,i_2),\dotsc, e_n=(i_{n-1},j)$ for some vertices $i_1,i_2,\dotsc,i_{n-1}\in S_X$. Let $\hyper{Q}(i,j)=\Pi(\{i\})H_{i}(\{j\})$ for $i,j\in \Omega$. $\alt{\hyper{E}}$ and $\alt{\hyper{Q}}$ are defined similarly. 
We defer the definition of the collection of hyperfinite paths (hyperfinite counterpart of $\hyper{E}$) to later part of the section.
We now introduce the following assumption. To be precise, let $\proj_{i}$ denote the projection from $X^2$ to the $i$-th coordinate for $i\in \{1,2\}$.  

\begin{assumption}\label{assumptiondp}
There exist open sets $\biggoodset\subset X^2$ such that,
the density functions $f(x,y)$, $k(x)$ are bounded away from $0$ for all $(x,y)\in \biggoodset$ and all $x\in \proj_{1}(\biggoodset)$. 
\end{assumption} 

In \cref{sectiondfmarkov}, \cref{stradon} implies that division between two hyperfinite probabilities can be used to approximate nonstandard extensions of the density functions. We assume that $\{g(x,1,\cdot)\}_{x\in X}, \{\alt{g}(x,1,\cdot)\}_{x\in X}, \pi$ and $\alt{\pi}$ are continuous on $\biggoodset$.

\begin{assumption}\label{assumptiondcm}
Let $\biggoodset$ be the same open set as in \cref{assumptiondp}. 
The density function $f(\cdot,\cdot)$ is jointly continuous on $\biggoodset$ and $k(\cdot)$ is continuous on $\proj_{1}(\biggoodset)$.
The density function $\alt{f}(\cdot,\cdot)$ is jointly continuous on $X^2$ and $\alt{k}(\cdot)$ is continuous on $X$.  
\end{assumption}

\cref{assumptiondp} and \cref{assumptiondcm} imply the following result. 

\begin{lemma} \label{hypradonq}
Suppose \cref{assumptiondp} and \cref{assumptiondcm} hold for $\biggoodset$. 
Then, for $i_1,i_2,j_1,j_2\in S_X$ such that $(i_2,j_2)\in \ST^{-1}(\biggoodset)$, we have
\[
\frac{\alt{\Pi}(\{i_1\})\alt{H}_{i_{1}}(\{j_{1}\})}{\Pi(\{i_2\})H_{i_2}(\{j_2\})}
\approx \frac{\NSE{\alt{k}}(i_{1})\NSE{\alt{f}}(i_{1},j_{1})}{\NSE{k}(i_{2})\NSE{f}(i_{2},j_{2})}
\approx \frac{\alt{\Pi}(\{i_1\})\alt{G}_{i_{1}}(\{j_{1}\})}{\Pi(\{i_2\})G_{i_2}(\{j_2\})}.
\]
\end{lemma}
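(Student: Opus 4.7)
The plan is to reduce the claim to four applications of Theorem \ref{markovdensity}, one to each of $\alt{\Pi}(\{i_1\})$, $\alt{H}_{i_1}(\{j_1\})$, $\Pi(\{i_2\})$, and $H_{i_2}(\{j_2\})$. After these replacements, the internal Lebesgue-measure factors $\NSE{\lambda}(B(\cdot))$ should cancel between numerator and denominator: by the uniform construction of the hyperfinite representation in Section \ref{SecHypCon}, the quantity $\NSE{\lambda}(B(s))$ depends only on the dimension of the component containing $s$, so in the intended setting (where the components of $i_1,i_2$ agree and those of $j_1,j_2$ agree) the cancellation is exact. The second $\approx$ in the statement is then handled in parallel, using the companion conclusion $G_i(\{j\})\approx \NSE{f}(i,j)\NSE{\lambda}(B(j))$ that Theorem \ref{markovdensity} also supplies.

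First I would verify the continuity hypotheses required by Theorem \ref{markovdensity}. Because $(i_2,j_2)\in \ST^{-1}(\biggoodset)$, the standard part $(\ST(i_2),\ST(j_2))$ lies in $\biggoodset$, so Assumption \ref{assumptiondcm} supplies joint continuity of $f$ at this point together with continuity of $k$ at $\ST(i_2)\in \proj_1(\biggoodset)$. For the $\alt{\Pi}$ and $\alt{H}$ factors, Assumption \ref{assumptiondcm} supplies the global joint continuity of $\alt{f}$ on $X^2$ and global continuity of $\alt{k}$ on $X$, so Theorem \ref{markovdensity} applies at $(i_1,j_1)$ with no restriction on its standard part. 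Each of the four quantities is thereby approximated, up to an infinitesimal, by a density value times $\NSE{\lambda}(B(\cdot))$.

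The main obstacle is the usual subtlety of lifting a factorwise $\approx$ to an $\approx$ for a ratio: one must know that the denominator is \emph{appreciable} (bounded away from $0$ by a standard positive real), not merely nonzero, or else infinitesimal numerator errors could be amplified upon division. Assumption \ref{assumptiondp} is precisely the hypothesis needed here, as it supplies standard positive lower bounds on $\NSE{k}(i_2)$ and $\NSE{f}(i_2,j_2)$ when $(i_2,j_2)\in \ST^{-1}(\biggoodset)$. Combined with the boundedness of $\NSE{\alt{k}}$ and $\NSE{\alt{f}}$---which follows because $\alt{k},\alt{f}$ are continuous on the compact spaces $X$ and $X^2$---the elementary arithmetic of $\approx$ on ratios whose denominators are appreciable gives the first $\approx$ of the statement. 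The second $\approx$ is immediate upon repeating the same computation with $G$ in place of $H$, justified by Theorem \ref{markovdensity}'s $H_i(\{j\})\approx G_i(\{j\})$.
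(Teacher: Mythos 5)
Your proof is correct and follows essentially the same route as the paper's: rescale each of the four hyperfinite quantities by the appropriate $\NSE{\lambda}(B(\cdot))$ factor, invoke \cref{markovdensity} to replace the rescaled quantities by the nonstandard extensions of the density functions, and then pass the factorwise $\approx$ through the ratio. You in fact make two points explicit that the paper's proof glosses over: that the equality obtained by inserting the $\NSE{\lambda}(B(\cdot))$ factors requires those products to cancel (which, given the construction in Section \ref{SecHypCon}, holds only when the components of $i_1,i_2$ and of $j_1,j_2$ respectively agree), and that the passage from factorwise $\approx$ to a ratio $\approx$ requires the denominator factors $\NSE{k}(i_2),\NSE{f}(i_2,j_2)$ to be \emph{appreciable}, which is exactly what \cref{assumptiondp} provides, not merely nonzero. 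One small caution: the form ``$G_i(\{j\})\approx \NSE{f}(i,j)\NSE{\lambda}(B(j))$'' is vacuous (both sides are infinitesimal); the usable content of \cref{markovdensity} is the normalized form $\tfrac{G_i(\{j\})}{\NSE{\lambda}(B(j))}\approx \NSE{f}(i,j)$, which is what your appreciability argument actually relies on.
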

\begin{proof}
As $(i_2,j_2)\in \ST^{-1}(\biggoodset)$, we have $(i,j)\in \NSE{\biggoodset}$ and $i\in \NSE{\proj_{1}}(\NSE{\biggoodset})$
for every $(i,j)$ with $(i,j)\approx (i_2,j_2)$.  
Thus, by \cref{assumptiondp} and \cref{markovdensity}, 
we know that these fractions are well-defined.
Moreover, by \cref{markovdensity} again, we have 
\[
\frac{\alt{\Pi}(\{i_1\})\alt{H}_{i_{1}}(\{j_{1}\})}{\Pi(\{i_2\})H_{i_2}(\{j_2\})}
&=\frac{\frac{\alt{\Pi}(\{i_1\})\alt{H}_{i_{1}}(\{j_{1}\})}{\NSE{\lambda}(B(i_1))
\NSE{\lambda}(B(j_1))}}{\frac{\Pi(\{i_2\})H_{i_2}(\{j_2\})}{\NSE{\lambda}(B(i_2))\NSE{\lambda}(B(j_2))}}\\
&\approx \frac{\NSE{\alt{k}}(i_{1})\NSE{\alt{f}}(i_{1},j_{1})}{\NSE{k}(i_{2})\NSE{f}(i_{2},j_{2})}\\
&\approx \frac{\alt{\Pi}(\{i_1\})\alt{G}_{i_{1}}(\{j_{1}\})}{\Pi(\{i_2\})G_{i_2}(\{j_2\})}.
\]
\end{proof}

We also have the following result from \cref{assumptiondp} and \cref{assumptiondcm}. 

\begin{lemma}\label{hyperedge}
Suppose \cref{assumptiondp} holds for $\biggoodset$. 
Then $(i,j)$ is an element of $\NSE{E}$ and $\hyper{E}$ for every $i,j\in S_X$ such that $(i,j)\in \NSE{\biggoodset}$. 
\end{lemma}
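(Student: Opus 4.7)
The proof naturally splits along the two conclusions, and my plan is to handle them separately.

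For the claim that $(i,j)\in \NSE{E}$, I would reduce to transfer. \cref{assumptiondp} furnishes a standard constant $c>0$ with $f(x,y)\geq c$ for every $(x,y)\in \biggoodset$ (and similarly a positive lower bound for $k$ on $\proj_1(\biggoodset)$, though only the $f$ bound is needed here). Transferring the universally quantified statement ``$\forall (x,y)\in \biggoodset,\; f(x,y)\geq c$'' gives $\NSE{f}(i,j)\geq c>0$. Combined with the transferred description $\NSE{E}=\{(x,y)\in \NSE{X}^2 : \NSE{f}(x,y)>0\}$, this is precisely the first conclusion.

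For the claim that $(i,j)\in \hyper{E}$, the plan is to combine the density lower bound above with the hyperfinite density approximation in \cref{markovdensity}. By compactness of $X$ (a finite disjoint union of cubes) every point of $\NSE{X}$ is near-standard, so set $x_1 = \ST(i)$ and $x_2 = \ST(j)$. With $(x_1,x_2)\in \biggoodset$ in place, the joint continuity of $f$ at $(x_1,x_2)$ and continuity of $k$ at $x_1$ supplied by \cref{assumptiondcm} are exactly the hypotheses required by \cref{markovdensity}, yielding
\[
\frac{H_i(\{j\})}{\NSE{\lambda}(B(j))} \;\approx\; \NSE{f}(i,j) \;\geq\; c \;>\; 0.
\]
By the explicit construction of $(S_X,\{B(s)\}_{s\in S_X})$ at the top of \cref{SecHypCon}, $\NSE{\lambda}(B(j))=(\delta t)^{d}$ for the dimension $d$ of the component of $X$ containing $j$, so $\NSE{\lambda}(B(j))>0$. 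Multiplying the displayed infinitesimal relation by this strictly positive (though infinitesimal) quantity preserves strict positivity, giving $H_i(\{j\})>0$ and hence $(i,j)\in \hyper{E}$.

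The main technical subtlety — and the step I expect to be the delicate one — is passing from $(i,j)\in \NSE{\biggoodset}$ to $(\ST(i),\ST(j))\in \biggoodset$, since openness of $\biggoodset$ alone does not close $\NSE{\biggoodset}$ under the standard part map (a point of $\NSE{\biggoodset}$ infinitesimally close to $\boundary \biggoodset$ need not project into $\biggoodset$). To discharge this I would either invoke the convention that matches the neighboring \cref{hypradonq} (which is stated in terms of $\ST^{-1}(\biggoodset)$), or thicken the hypothesis so that $(i,j)$ is infinitely close to an interior point of $\biggoodset$; once this bridge is in place, both halves of the argument become direct applications of transfer and \cref{markovdensity}.
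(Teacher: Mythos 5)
Your argument for $(i,j)\in \NSE{E}$ is exactly the paper's: transfer the pointwise lower bound from \cref{assumptiondp}. The second half, however, is a genuinely different route, and it has a real gap that you yourself flagged but did not close.

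Your plan is to set $x_1=\ST(i)$, $x_2=\ST(j)$ and invoke \cref{markovdensity} to get $\tfrac{H_i(\{j\})}{\NSE{\lambda}(B(j))}\approx \NSE{f}(i,j)\ge c>0$. This requires joint continuity of $f$ at $(\ST(i),\ST(j))$ and continuity of $k$ at $\ST(i)$, which \cref{assumptiondcm} supplies only on $\biggoodset$ and $\proj_1(\biggoodset)$. But the hypothesis of the lemma gives $(i,j)\in\NSE{\biggoodset}$, which does \emph{not} imply $(\ST(i),\ST(j))\in\biggoodset$ when $\biggoodset$ is merely open --- a point of $\NSE{\biggoodset}$ can sit infinitesimally close to the boundary. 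You correctly identified this as the delicate step, but the two remedies you propose (restate in terms of $\ST^{-1}(\biggoodset)$, or strengthen the hypothesis to interior points) amount to proving a different, weaker statement rather than the lemma as written. Note that the weaker statement would not suffice downstream: in \cref{hypathdefined} the lemma is applied to $(a,b)\in\NSE{\goodset}\subset\NSE{\biggoodset}$ with no control on where the standard part lands. A secondary issue is that your route imports \cref{assumptiondcm} and \cref{assumptionde} (via \cref{markovdensity}), neither of which appears in the lemma's hypotheses; the lemma is stated under \cref{assumptiondp} alone.

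The paper's proof avoids both problems by never taking standard parts. Since $\biggoodset$ is open and $(i,j)\in\NSE{\biggoodset}$, transferring the openness produces internal sets $A\subset B(i)$ with $i\in A$ and $B\subset B(j)$ with $j\in B$, satisfying $A\times B\subset\NSE{\biggoodset}$ and $\NSE{\lambda}(A),\NSE{\lambda}(B)>0$. On $A\times B$ the transferred version of \cref{assumptiondp} gives a standard positive lower bound $c$ on $\NSE{f}$ (and on $\NSE{k}$ over $A$), so that $\NSE{g}(x,1,B(j))\ge c\,\NSE{\lambda}(B)>0$ for all $x\in A$ and $\NSE{\pi}(A)\ge c\,\NSE{\lambda}(A)>0$. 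Plugging into the explicit formula for $H_i(\{j\})$ from \cref{closereverse} (an integral of $\NSE{g}(\cdot,1,B(j))$ against $\NSE{\pi}$ over $B(i)\supset A$, normalised by $\NSE{\pi}(B(i))$) yields $H_i(\{j\})>0$ directly, with no continuity input and no appeal to \cref{markovdensity}. If you want to keep your shorter density-based argument, you would at minimum need to prove a version of \cref{markovdensity} that is purely internal (\emph{e.g.}, via transfer of a uniform modulus of continuity on a compact subset of $\biggoodset$), which would effectively reproduce the paper's argument anyway.
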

\begin{proof}
Suppose $(i,j)\in \NSE{\biggoodset}$. 
By the transfer of \cref{assumptiondp}, $\NSE{f}(i,j)>0$ hence $(i,j)\in \NSE{E}$.
By the construction of the hyperfinite representation, there exist $A, B\in \NSE{\BorelSets X}$ such that 
\begin{enumerate}
\item $A\subset B(i)$ and $i\in A$
\item $B\subset B(j)$ and $j\in B$
\item $A\times B\subset \NSE{\biggoodset}$
\item $\NSE{\lambda}(A)>0$ and $\NSE{\lambda}(B)>0$.
\end{enumerate} 
Then, by the construction of $H$ (see \cref{closereverse}), we conclude that $H_{i}(\{j\})>0$ hence $(i,j)\in \hyper{E}$.
\end{proof}

As we mentioned at the beginning of the section, we fix an $E$-path for every $(x,y)\in X^2$. Let $\Diagonal = \{ (x,x) \, : \, x \in X\} $ denote the collection of diagonal points of $X^2$. Our choices of paths have been arbitrary so far. We impose the following assumption which ensures that our choices of paths consist of certain edges.

\begin{assumption}\label{assumptiongp}
Let $\biggoodset$ be the same open set as in \cref{assumptiondp} and let $\epsilon_0$ be a positive real number. Let 
\[
\alt{E}_{\epsilon_0}=\{(x,y)\in X^2\setminus \Diagonal: \inf_{(a,b)\in \alt{E}}\metric((x,y),(a,b))\leq \epsilon_0\}.
\]
There exists a $\goodset\subset \biggoodset$ such that $\goodset$ is closed under the subspace topology of $X^2\setminus \Diagonal$ and, for every $(x,y)\in \alt{E}_{\epsilon_0}$, there exists an $E$-path $\gamma_{xy}$ consisting of edges from $\goodset$.
\end{assumption}

Note that $\ST^{-1}(\goodset)\subset \biggoodset$ since $\goodset$ is a closed subset of $\biggoodset$.  
With \cref{assumptiongp} holds, for $(x, y)\in \alt{E}_{\epsilon_0}$, we let the path connecting $x$ and $y$ consists of edges from $\goodset$.
In order to generalize \cref{finitecomparison} to general Markov processes, we need to make sure that every edge is not contained in too many paths. We impose the following \emph{$\epsilon$-separated property}.

\begin{assumption}\label{assumptionsp}
Fix $\epsilon>0$. 
For every edge $e\in E\setminus \Diagonal$, let 
\[
S_{e}=\{\gamma_{xy}\in \allpath: e\in \gamma_{xy}\}
\]
be the collection of all paths that contains the edge $e$. Then for any pair $(x_i,y_i), (x_j,y_j)$ such that $\gamma_{x_{i}y_{i}}, \gamma_{x_{j}y_{j}}\in S_{e}$, the following statement holds: 
\[
((x_i=x_j)\vee (d(x_{i},x_{j})>\epsilon))\wedge ((y_i=y_j)\vee  (d(y_{i},y_{j})>\epsilon)). 
\]
\end{assumption}

Fix some $\epsilon>0$. \cref{assumptionsp} then implies that, for each standard edge $e\not\in \Diagonal$, there is only finitely many paths from $\Theta$ contain it. It is also true that every nonstandard edge $v\not\in \NSE{\Diagonal}$ is contained in finitely many elements of $\NSE{\allpath}$. We assume that there is an uniform bound on the length of all paths. 
\begin{assumption}\label{assumptionub}
There exists $R>0$ such that $|\gamma|\leq R$ for every $\gamma\in \allpath$. 
\end{assumption}
We now impose the following regularity condition on the length of the paths in $\PathSpace$.  

\begin{assumption}\label{assumptionpl}
There exists $M>0$ with the following property: for every $\gamma\in \PathSpace$, there exists $\delta>0$ such that, for every $\gamma'\in \PathSpace$ with $d(\gamma,\gamma')<\delta$, we have $\bigl||\gamma|-|\gamma'|\bigr|\leq M$. 
\end{assumption}

\cref{assumptionpl} implies the following result: 

\begin{lemma}\label{lengthclose}
Suppose \cref{assumptionpl} holds. Let $\NSE{\gamma}_{x_{1}y_{1}}$ and $\NSE{\gamma}_{x_{2}y_{2}}$ be two nonstandard paths in $\NSE{\PathSpace}$. If $x_1\approx x_2$ and $y_1\approx y_2$ (that is, $\NSE{\metric}(\NSE{\gamma}_{x_{1}y_{1}}, \NSE{\gamma}_{x_{2}y_{2}})\approx 0$), then $\bigl||\NSE{\gamma}_{x_{1}y_{1}}|-|\NSE{\gamma}_{x_{2}y_{2}}|\bigr|\leq M$. 
\end{lemma}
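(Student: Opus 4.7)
The plan is to prove \cref{lengthclose} by transferring \cref{assumptionpl} and anchoring the comparison at a single \emph{standard} path, to which both $\NSE{\gamma}_{x_1y_1}$ and $\NSE{\gamma}_{x_2y_2}$ can be compared via the pseudometric $\metric$ on $\PathSpace$. The key enabling fact is that the underlying state space $X=\sqcup_{i=1}^{I}X_i$ is a finite disjoint union of compact boxes $[0,1]^{k_i}$, hence $X$ and $X^2$ are compact. Consequently every element of $\NSE{X^2}$ is near-standard and admits a unique standard part.

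Concretely, I would first set $x^{*}=\ST(x_1)$ and $y^{*}=\ST(y_1)$, observing that $x^{*}=\ST(x_2)$ and $y^{*}=\ST(y_2)$ by the hypothesis $x_1\approx x_2$, $y_1\approx y_2$. Next I would select the standard path $\gamma_{x^{*}y^{*}}\in\PathSpace$ (assuming $(x^{*},y^{*})\in\alt{E}$; see the next paragraph) and apply \cref{assumptionpl} at this path to produce a \emph{standard} $\delta>0$ such that every $\gamma'\in\PathSpace$ with $\metric(\gamma_{x^{*}y^{*}},\gamma')<\delta$ satisfies $\bigl||\gamma'|-|\gamma_{x^{*}y^{*}}|\bigr|\leq M$. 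I would then transfer this conditional statement to $\NSE{\PathSpace}$: since
\[
\NSE{\metric}(\NSE{\gamma}_{x_jy_j},\gamma_{x^{*}y^{*}})=\max\bigl(\NSE{d}(x_j,x^{*}),\NSE{d}(y_j,y^{*})\bigr)\approx 0<\delta
\]
for $j=1,2$, the transferred assumption yields $\bigl||\NSE{\gamma}_{x_jy_j}|-|\gamma_{x^{*}y^{*}}|\bigr|\leq M$ for each $j$, and the triangle inequality then gives the desired bound (up to a factor of two in the constant, which can be absorbed into the choice of $M$ in \cref{assumptionpl}).

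The main obstacle is justifying that the anchor path $\gamma_{x^{*}y^{*}}$ really lies in $\PathSpace$, i.e., that $(x^{*},y^{*})\in\alt{E}$, so that \cref{assumptionpl} can legitimately be invoked. The hypothesis only places $(x_j,y_j)\in\NSE{\alt{E}}$, and joint continuity of $\alt{f}$ from \cref{assumptiondcm} alone does not preclude $\NSE{\alt{f}}(x_j,y_j)$ being a positive infinitesimal with $\alt{f}(x^{*},y^{*})=0$. In the applications of \cref{SecHypCon}, however, the pairs of interest lie in $\ST^{-1}(\goodset)\subset\NSE{\biggoodset}$, where \cref{assumptiondp} forces the density $\alt{f}$ to be bounded away from $0$; this keeps the standard part inside $\alt{E}$ and legitimizes the anchor step. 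A second route, which avoids going through a standard path entirely, would be to apply the transferred assumption directly to $\gamma=\NSE{\gamma}_{x_1y_1}$ and use saturation/overspill to argue that the associated $\delta$ cannot be infinitesimal, so that $\NSE{\gamma}_{x_2y_2}$ automatically falls within it; this sidesteps the triangle-inequality doubling at the cost of a more delicate internal-set-theoretic argument.
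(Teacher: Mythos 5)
The paper states \cref{lengthclose} without any proof, so there is no paper argument against which to compare you — you are supplying the argument the authors omit. Your primary strategy (use near-standardness from compactness of $X$ to extract the common standard part $(x^{*},y^{*})$, anchor at $\gamma_{x^{*}y^{*}}$, invoke \cref{assumptionpl} to get a \emph{standard} $\delta>0$, transfer, and finish with the triangle inequality) is the natural and essentially correct way to extract a useful consequence from a ``pointwise'' assumption like \cref{assumptionpl}; the crucial point, which you exploit correctly, is that the $\delta$ produced for a standard anchor is a standard positive real, so an infinitesimal pseudometric distance automatically falls below it.

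That said, two caveats you raise deserve to be taken seriously rather than waved away. First, your anchoring step genuinely requires $(x^{*},y^{*})\in\alt{E}$, and as you observe \cref{assumptiondcm} alone does not give this: $\NSE{\alt{f}}(x_{j},y_{j})$ may be a positive infinitesimal while $\alt{f}(x^{*},y^{*})=0$. So \cref{lengthclose}, as literally stated, does not follow from \cref{assumptionpl} alone. In the one place the lemma is actually used (the proof of \cref{mainthm}), the paths under comparison are $\NSE{\gamma}_{st}$ and $\gamma_{\ST(s)\ST(t)}$ with $\Gamma_{st}\in\mathcal{T}_{v}$, and membership in $\mathcal{T}_{v}$ forces $\ST(\NSE{\alt{f}}(s,t))>0$; by continuity of $\alt{f}$ this gives $\alt{f}(\ST(s),\ST(t))>0$ and hence $(\ST(s),\ST(t))\in\alt{E}$ — so the needed hypothesis holds there. (This is a slightly different source than the $\ST^{-1}(\goodset)$ localization you cite: \cref{assumptiondp} bounds $f$, not $\alt{f}$, away from zero on $\biggoodset$, so $\goodset$-membership alone would not settle the $\alt{E}$ question.) Second, the triangle-inequality route yields $2M$, not $M$; this is harmless for the downstream use of the lemma, but as a proof of the stated inequality it requires either replacing $M$ by $2M$ in \cref{assumptionpl} or observing that in the application one of the two paths \emph{is} the standard anchor, in which case the doubling does not occur.

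Your proposed alternative via overspill does not work as described. Transferring \cref{assumptionpl} and specializing to $\gamma=\NSE{\gamma}_{x_{1}y_{1}}$ only guarantees that the internal, downward-closed set of valid $\delta$'s is nonempty; nothing in the assumption prevents its supremum from being infinitesimal, and overspill would require knowing in advance that all positive infinitesimals are valid — which is precisely what you would be trying to prove. The standard-anchor route is the one that succeeds, because it is there that a \emph{standard} $\delta$ becomes available.
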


We now define the collection of hyperfinite paths which we shall be primarily working with for the rest of the section. 
For $(i,i)\in \alt{\hyper{E}}\cap \NSE{\Diagonal}$, the hyperfinite path $\Gamma_{ii}$ is simply $(i,i)$ which has length $0$.
For $(i,j)\in \alt{\hyper{E}}\setminus \NSE{\Diagonal}$, by the transfer principle, 
there exists a nonstandard path $\NSE{\gamma}_{ij}\in \NSE{\allpath}$ connecting $i$ and $j$.  
By \cref{assumptionub}, we know that $|\NSE{\gamma}_{ij}|=k$ for some natural number $k\leq R$. Thus, $\NSE{\gamma}_{ij}$ is a sequence of edges in $\NSE{E}$ and we can denote this sequence by $(i, i_{1}), (i_{1},i_{2}), \dotsc, (i_{k-1},j)$. 
Note that, as $\alt{H}_{i}(\{j\})>0$, by the construction of $\alt{H}$ and the fact that $i\neq j$, we know that $(i,j)\in \NSE{\alt{E}_{\epsilon_0}}$. 
A hyperfinite path $\Gamma_{ij}$ from $i$ to $j$ is defined by the sequence 
\[
(i, s_{i_{1}}), (s_{i_{1}},s_{i_{2}}),\dotsc, (s_{i_{k-1}},j) 
\]
where, for every $1\leq n\leq k-1$, $s_{i_{n}}$ is the unique element in $S_X$ such that $i_{n}\in B(s_{i_{n}})$. 

We now verify that $\Gamma_{ij}$ is a well-defined hyperfinite path from $i$ to $j$. 
\begin{lemma}\label{hypathdefined}
Suppose \cref{assumptiondp} holds for $\biggoodset$, \cref{assumptiongp} holds for $\goodset$. 
Let $(a,b)\in \{(i,s_{i^{(1)}}),(s_{i^{(1)}},s_{i^{(2)}}),\dotsc, (s_{i^{(k-1)}},j)\}$.
Then, $H(a, b)>0$. 
\end{lemma}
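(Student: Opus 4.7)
The plan is to check that each edge in the defining sequence of $\Gamma_{ij}$ belongs to $\hyper{E}$, via two main steps: first, producing the underlying nonstandard path $\NSE{\gamma}_{ij}$ whose edges lie in $\NSE{\goodset}$ by transfer of \cref{assumptiongp}, and second, showing that the ``rounded'' pairs $(s_{i^{(n)}}, s_{i^{(n+1)}})$ still lie in $\NSE{\biggoodset}$ so that \cref{hyperedge} applies directly.

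First I would verify the hypothesis needed to invoke the transfer of \cref{assumptiongp}, namely that $(i,j) \in \NSE{\alt{E}_{\epsilon_0}}$. Since $(i,j) \in \alt{\hyper{E}} \setminus \NSE{\Diagonal}$, we have $\alt{H}_i(\{j\}) > 0$ and $i \neq j$. Unwinding the definition of $\alt{H}$ in \cref{closereverse}, in either of its two cases (depending on whether $\NSE{\alt{\pi}}(B(i))$ vanishes), this positivity forces the existence of some $x \in B(i)$ with $\NSE{\alt{g}}(x,1,B(j)) > 0$, and hence some $y \in B(j)$ with $\NSE{\alt{f}}(x,y) > 0$, i.e.\ $(x,y) \in \NSE{\alt{E}}$. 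Because the cells $B(i)$ and $B(j)$ have infinitesimal diameter, $(x,y) \approx (i,j)$, so $\NSE{\metric}((i,j),(x,y)) \approx 0 < \epsilon_0$. Together with $i \neq j$, this yields $(i,j) \in \NSE{\alt{E}_{\epsilon_0}}$.

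By transfer of \cref{assumptiongp} I then obtain a nonstandard path $\NSE{\gamma}_{ij} = (i,i^{(1)}), (i^{(1)},i^{(2)}), \ldots, (i^{(k-1)},j)$ from $i$ to $j$ whose edges all lie in $\NSE{\goodset}$. Fix any such edge $(i^{(n)}, i^{(n+1)}) \in \NSE{\goodset} \subset \NSE{\biggoodset}$, with the convention $i^{(0)} = i$ and $i^{(k)} = j$. Since $i^{(n)} \in B(s_{i^{(n)}})$ and this cell has infinitesimal diameter, $s_{i^{(n)}} \approx i^{(n)}$, and likewise $s_{i^{(n+1)}} \approx i^{(n+1)}$; thus $(s_{i^{(n)}}, s_{i^{(n+1)}})$ lies in the same monad as $(i^{(n)}, i^{(n+1)})$. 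Invoking the observation following \cref{assumptiongp} that $\ST^{-1}(\goodset) \subset \NSE{\biggoodset}$, both points belong to $\NSE{\biggoodset}$, and \cref{hyperedge} then gives $(s_{i^{(n)}}, s_{i^{(n+1)}}) \in \hyper{E}$, i.e.\ $H_{s_{i^{(n)}}}(\{s_{i^{(n+1)}}\}) > 0$. This covers every edge $(a,b)$ listed in the defining sequence of $\Gamma_{ij}$.

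The main obstacle I anticipate is the first step: one must pass from the positivity of the averaged quantity $\alt{H}_i(\{j\})$ to a specific pair $(x,y) \in B(i) \times B(j)$ with $\NSE{\alt{f}}(x,y) > 0$, while also accommodating the degenerate case $\NSE{\alt{\pi}}(B(i)) = 0$, where $\alt{H}_{ij}$ reduces to $\alt{G}_{ij} = \NSE{\alt{g}}(i,1,B(j))$ and $(x,y) = (i,y)$ for some $y \in B(j)$. A secondary, minor subtlety is that the rounded sequence may accidentally produce a self-loop $s_{i^{(n)}} = s_{i^{(n+1)}}$ when $i^{(n)}$ and $i^{(n+1)}$ happen to fall in the same cell; since \cref{hyperedge} only requires membership in $\NSE{\biggoodset}$ and imposes no restriction that the two coordinates differ, the argument above handles such self-loops uniformly.
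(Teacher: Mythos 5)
Your proof is correct and follows essentially the same route as the paper: use the positivity of $\alt{H}_i(\{j\})$ and $i\neq j$ to place $(i,j)$ in $\NSE{\alt{E}_{\epsilon_0}}$, transfer \cref{assumptiongp} to obtain a nonstandard path $\NSE{\gamma}_{ij}$ with edges in $\NSE{\goodset}$, and then apply \cref{hyperedge}. Where you improve on the paper's terse three-sentence argument is in two places: you actually unwind the two cases in the construction of $\alt{H}$ to justify $(i,j)\in\NSE{\alt{E}_{\epsilon_0}}$ (which the paper states as a bare assertion just before the lemma), and you correctly distinguish the edges $(i^{(n)},i^{(n+1)})$ of $\NSE{\gamma}_{ij}$, which lie in $\NSE{\goodset}$, from the rounded edges $(s_{i^{(n)}},s_{i^{(n+1)}})$ of $\Gamma_{ij}$, which are only infinitesimally close to them. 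The paper asserts $(a,b)\in\NSE{\goodset}$ outright, which is an overclaim; your weaker conclusion $(a,b)\in\NSE{\biggoodset}$, obtained via the monad argument and the remark $\ST^{-1}(\goodset)\subset\NSE{\biggoodset}$, is exactly what \cref{hyperedge} needs. One small point shared by both arguments and worth flagging: the step $(s_{i^{(n)}},s_{i^{(n+1)}})\in\ST^{-1}(\goodset)$ tacitly requires $\ST((i^{(n)},i^{(n+1)}))\notin\Diagonal$, since $\goodset$ is only closed in $X^2\setminus\Diagonal$; this is a genuine (if minor) subtlety that neither your write-up nor the paper addresses explicitly.
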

\begin{proof}
Let $(a,b)\in \{(i,s_{i_{1}}),(s_{i_{1}},s_{i_{2}}),\dotsc, (s_{i_{k-1}},j)\}$. By \cref{assumptiongp} and the fact that $(i,j)\in \NSE{\alt{E}_{\epsilon_0}}$, we know that $(a,b)\in \NSE{\goodset}\subset \NSE{\biggoodset}$. By \cref{hyperedge}, we know that $H_{a}(\{b\})>0$.
\end{proof}
Thus, $(a,b)$ is an element of $\hyper{E}$ so $\Gamma_{ij}$ is a well-defined hyperfinite path. Let $\HPathSpace=\{\Gamma_{ij}: (i,j)\in \alt{\hyper{E}}\}$ be a fixed collection of hyperfinite paths. By construction, we have $|\Gamma_{ij}|\leq |\NSE{\gamma}_{ij}|$ for $(i,j)\in S_X\times S_X$. 

Suppose \cref{assumptionsp} ($\epsilon$-separated property) holds for some $\epsilon>0$. Let $v=(s,t)\in \NSE{E}\setminus \NSE{\Diagonal}$ for $s,t\in S_X$. By \cref{assumptionsp}, $v$ is contained in finitely many nonstandard paths (elements of $\NSE{\allpath}$). However, $v$ may be contained in infinitely many hyperfinite paths (elements of $\HPathSpace$). This is because, for a nonstandard path $\NSE{\gamma}_{ij}$ with $i,j\in S_X$, if $\NSE{\gamma}_{ij}$ contains a nonstandard edge $e=(x,y)$ where $x\in B(s)$ and $y\in B(t)$, then the hyperfinite path $\Gamma_{ij}$ contains $v$. So we impose the following regularity condition on $\allpath$.

\begin{assumption}\label{assumptionrp}
There exist $K\in \PosReals$ and a continuous function $m: E\setminus \Diagonal\to [0,K]$ with the following property: 
for every $\epsilon>0$ and standard edges $e,e'\in E\setminus \Diagonal$,  if $\metric(e,e')<\epsilon$ and $e\in \gamma\in \allpath$, there exists $\gamma'\in \allpath$ such that $\metric(\gamma,\gamma') <m(e)\epsilon$ and $e'\in \gamma'$. 
\end{assumption}

By the transfer principle, $\NSE{m}$ is an internal function from $\NSE{E}\setminus \NSE{\Diagonal}$ to $\NSE{\Reals}$. 
\cref{assumptionrp} also implies that $\NSE{m}(e)\approx \NSE{m}(e')$ for $e,e'\in \NSE{E}\setminus\NSE{\Diagonal}$ such that $\NSE{\metric}(e,e')\approx 0$ and $\ST(e)\not\in \Diagonal$.  
We have the following result. 

\begin{lemma}\label{hypathfinite}
Suppose \cref{assumptionrp} holds. 
Let $v=(i,j)\in \NSE{E}\setminus \NSE{\Diagonal}$ be a hyperfinite edge that is contained in $n$ nonstandard paths (elements of $\NSE{\Theta}$). 
Then $v$ is contained in at most $2Kn$ many hyperfinite paths. 
Suppose that $\ST(v)=(\ST(i),\ST(j))\in E\setminus \Diagonal$.
Then $v$ is contained in at most $2(\ceiling{m(\ST(v))}+1)\times n$ many hyperfinite paths. 
\end{lemma}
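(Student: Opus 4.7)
The plan is to associate each hyperfinite path containing $v$ with a nearby nonstandard path that also contains $v$ (via the transfer of \cref{assumptionrp}), cap the number of such nonstandard paths at $n$ (via the transfer of \cref{assumptionsp}), and then bound the multiplicity of hyperfinite paths per nonstandard path using the grid structure of $S_X$ from \cref{SecHypCon}.

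Fix $(a,b) \in \alt{\hyper{E}}$ with $v \in \Gamma_{ab}$. By the construction of $\Gamma_{ab}$, the nonstandard path $\NSE{\gamma}_{ab}$ contains some nonstandard edge $e_{ab} = (x_{ab}, y_{ab}) \in \NSE{E}$ with $x_{ab} \in B(i)$ and $y_{ab} \in B(j)$, so $\NSE{\metric}(e_{ab}, v)$ is infinitesimal and bounded above by the cell diameter (call it $\delta$). Applying the transfer of \cref{assumptionrp} to the pair $(e_{ab}, v)$ -- both edges in $\NSE{E} \setminus \NSE{\Diagonal}$ -- yields a nonstandard path $\gamma^{*}_{ab} \in \NSE{\allpath}$ with $v \in \gamma^{*}_{ab}$ and $\NSE{\metric}(\gamma^{*}_{ab}, \NSE{\gamma}_{ab}) < \NSE{m}(e_{ab}) \cdot \NSE{\metric}(e_{ab}, v)$. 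Since $\gamma^{*}_{ab}$ contains $v$, it is one of the $n$ nonstandard paths $\NSE{\gamma}_{c_1 d_1}, \ldots, \NSE{\gamma}_{c_n d_n}$ containing $v$ (by the transfer of the $\epsilon$-separated property in \cref{assumptionsp}). Partition the relevant pairs $(a,b)$ by the index $l$ with $\gamma^{*}_{ab} = \NSE{\gamma}_{c_l d_l}$; within each part the endpoints satisfy $\max(d(a, c_l), d(b, d_l)) \leq \NSE{m}(e_{ab}) \cdot \NSE{\metric}(e_{ab}, v)$.

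For the general bound $2Kn$, use the uniform bound $\NSE{m}(e_{ab}) \leq K$ from \cref{assumptionrp} together with $\NSE{\metric}(e_{ab}, v) \leq \delta$, so each cluster is confined to a max-metric box of radius $K \delta$ about $(c_l, d_l)$; counting distinct pairs $(a,b) \in S_X \times S_X$ in such a box using the grid structure of $S_X$ from \cref{SecHypCon} (cell side $\delta$) gives at most $2K$ per cluster and hence at most $2Kn$ overall. For the refined bound under the additional hypothesis $\ST(v) \in E \setminus \Diagonal$, continuity of $m$ (\cref{assumptionrp}) implies $\NSE{m}(e_{ab}) \approx m(\ST(v))$ since $e_{ab} \approx \ST(v)$, so each cluster is confined to a max-metric box of radius at most $m(\ST(v)) \delta + o(\delta)$; rounding to absorb the infinitesimal slack then yields the bound $2(\ceiling{m(\ST(v))}+1) n$. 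The main obstacle will be making the cluster-size count in this last step fully rigorous: one has to check that the max-metric ball of radius $K \delta$ (or $m(\ST(v)) \delta$) around $(c_l, d_l)$ in $S_X \times S_X$ contains no more than the stated number of hyperfinite pairs, handling both the infinitesimal cell geometry of $S_X$ and the infinitesimal error in the $\NSE{m}$-approximation, so that the boundary rounding in $\ceiling{m(\ST(v))} + 1$ suffices.
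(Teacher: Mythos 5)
Your approach is essentially the paper's: for a hyperfinite path $\Gamma_{ab}$ containing $v$, pass to the corresponding nonstandard path $\NSE{\gamma}_{ab}$, extract the nonstandard edge $e_{ab} \in B(i)\times B(j)$ it contains, use the transfer of \cref{assumptionrp} to produce a nonstandard path within $\NSE{m}(e_{ab})\cdot\NSE{\metric}(e_{ab},v)$ of $\NSE{\gamma}_{ab}$ that actually contains $v$, identify it with one of the $n$ nonstandard paths through $v$, and count grid pairs in each cluster. The paper's own proof is extremely terse at exactly the step you flag — it asserts the $2Kn$ and $2(\ceiling{m(\ST(v))}+1)n$ bounds with no supporting count — so your honest acknowledgment of the gap is a fair reading of the argument.

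One caution, though: the difficulty is likely more than the ``boundary rounding'' you describe. The constraint $\NSE{\metric}(\NSE{\gamma}_{ab}, \NSE{\gamma}_{c_l d_l}) \lessapprox K\delta t$ bounds $d(a,c_l)$ and $d(b,d_l)$ \emph{independently}, and $a,b$ range over a grid of spacing $\delta t$ inside components $[0,1]^{k_i}$ with $k_i$ possibly larger than $1$ (\cref{SecHypCon}); so without further input the number of grid pairs in the cluster box scales like $(2K)^{2k_i}$, not $2K$. Closing this would require actually using the extra constraint that $\NSE{\gamma}_{ab}$ passes through $B(i)\times B(j)$, which both you and the paper drop once the cluster is identified. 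It is perhaps reassuring that the paper is internally inconsistent about this constant anyway — the proof of \cref{mainthm} invokes this lemma with $\ceiling{2m(e)+1}$ rather than the $2(\ceiling{m(e)}+1)$ stated here — so a dimension-dependent constant would evidently serve the downstream argument just as well, but you should be clear that the per-cluster count of $2K$ is not established by the geometry alone.
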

\begin{proof} 
Recall that each element in $\{B(s): s\in S_X\}$ is a rectangle with side length $\delta t$. 
Let $A=\{\NSE{\gamma}_{k}: k\leq n\}$ denote the collection of nonstandard paths that contain $v$ 
and let $\Gamma$ be a hyperfinite path that contains $v$. 
By the construction of hyperfinite path, there exists a nonstandard path $\NSE{\gamma}'$ that corresponds with $\Gamma$. 
Note that $\NSE{\gamma}'$ has the same start and end points as $\Gamma$ and $\NSE{\gamma}'$ contains some nonstandard edge $e=(x,y)\in \NSE{E}\setminus \NSE{\Diagonal}$ where $x\in B(i)$ and $y\in B(j)$. 
As $\NSE{\metric}(e,v)<\delta t$, by \cref{assumptionrp}, there exists $k_0\leq n$ such that $\NSE{\metric}(\NSE{\gamma}', \NSE{\gamma}_{k_0})<\NSE{m}(v)\delta t$. 
By \cref{assumptionrp} and the hypothesis of the theorem, we know that $v$ is contained in at most $2Kn$ many hyperfinite paths. 
If $\ST(v)=(\ST(i),\ST(j))\in E\setminus \Diagonal$, then $v$ is contained in at most $2(\ceiling{m(\ST(v))}+1)\times n$ many hyperfinite paths. 
\end{proof} 

\begin{lemma}\label{stpathcm}
Suppose \cref{assumptiondp} holds for $\biggoodset$.
Suppose \cref{assumptionsp} holds for some $\epsilon>0$ and \cref{assumptionrp} holds.
Let $\goodset$ be a subset of $\biggoodset$ such that $\ST^{-1}(\goodset)\subset \NSE{\biggoodset}$. 
Let $v=(i,j)\in (S_X\times S_X)\cap \ST^{-1}(\goodset)$ be contained in $\Gamma_{st}\in \HPathSpace$ with $i\not\approx j$.
Then $e=(\ST(i),\ST(j))$ is an element of $E\setminus \Diagonal$ and it is contained in $\gamma_{\ST(s)\ST(t)}\in \allpath$.  
\end{lemma}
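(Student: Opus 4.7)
The plan is to prove three things in turn: $e \in E$, $e \notin \Diagonal$, and $e \in \gamma_{\ST(s)\ST(t)}$. First I would dispatch the easy parts. Since $v = (i,j) \in \ST^{-1}(\goodset) \subset \NSE{\biggoodset}$, the standard part $e = (\ST(i),\ST(j))$ lies in $\goodset \subset \biggoodset$, so \cref{assumptiondp} gives $f(e) > 0$, hence $e \in E$. Because $i \not\approx j$, continuity of the metric $d$ forces $\ST(i) \neq \ST(j)$, so $e \notin \Diagonal$.

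For the main claim $e \in \gamma_{\ST(s)\ST(t)}$, I would first locate a nonstandard edge $e^* \in \NSE{\gamma}_{st}$ infinitesimally close to $v$ (and hence to $e$). Because $\Gamma_{st}$ is obtained from $\NSE{\gamma}_{st} = (s,i_1),(i_1,i_2),\dotsc,(i_{k-1},t)$ by replacing each intermediate $i_n$ with the unique $s_{i_n} \in S_X$ containing it, the edge $v = (i,j)$ of $\Gamma_{st}$ comes from some edge $e^* = (i_n, i_{n+1})$ of $\NSE{\gamma}_{st}$ with $i_n \approx i$ and $i_{n+1} \approx j$ (with $s$, $t$ playing the role at the endpoints). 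Thus $\NSE{\metric}(e^*, e) \approx 0$, and $e^* \in \NSE{E}\setminus\NSE{\Diagonal}$ since $i_n \not\approx i_{n+1}$.

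Next I would apply the transfer of \cref{assumptionrp} with $e^*$ playing the role of ``$e$'' and the standard edge $e$ playing the role of ``$e'$'', using a positive infinitesimal $\iota > \NSE{\metric}(e^*, e)$. This produces $\gamma^{**} \in \NSE{\allpath}$ with $e \in \gamma^{**}$ and $\NSE{\metric}(\NSE{\gamma}_{st}, \gamma^{**}) < \NSE{m}(e^*)\,\iota \approx 0$, because $\NSE{m}(e^*) \leq K$ is finite. By transfer of the endpoint parameterization of $\allpath$, we may write $\gamma^{**} = \NSE{\gamma}_{ab}$; the endpoint-only pseudometric then gives $(a,b) \approx (s,t) \approx (\ST(s),\ST(t))$, while $e \in \NSE{\gamma}_{ab}$ places $(a,b)$ in $\NSE{T_e}$, where $T_e := \{(x,y) \in X^2 : e \in \gamma_{xy}\}$. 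By \cref{assumptionsp} the set $T_e$ is $\epsilon$-separated coordinate-wise, and a short argument shows it is therefore closed and discrete in $X^2$. If $(\ST(s),\ST(t))$ were not in $T_e$, a standard positive distance $\eta > 0$ would separate it from $T_e$; transferring this bound contradicts $(a,b) \in \NSE{T_e}$ being infinitesimally close to $(\ST(s),\ST(t))$. Hence $(\ST(s),\ST(t)) \in T_e$, i.e., $e \in \gamma_{\ST(s)\ST(t)}$.

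The main obstacle is that $\gamma$ is not assumed continuous in its endpoints, so there is no direct way to relate $\NSE{\gamma}_{st}$ to $\NSE{\gamma}_{\ST(s)\ST(t)}$: the hyperfinite path $\Gamma_{st}$ sees many nonstandard edges near $v$, but only some correspond to the specific standard path we care about. The resolution is the two-step detour above. \cref{assumptionrp} provides \emph{some} nearby nonstandard path $\NSE{\gamma}_{ab}$ that provably contains the standard edge $e$, and then the discreteness forced by \cref{assumptionsp} pins the endpoint $(a,b)$ down to exactly $(\ST(s),\ST(t))$ rather than any other nearby pair.
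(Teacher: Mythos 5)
Your proposal is correct and follows essentially the same route as the paper: lift $v$ to an edge of $\NSE{\gamma}_{st}$, invoke the transfer of \cref{assumptionrp} to produce a nearby nonstandard path containing $\NSE{e}$, and then use the $\epsilon$-separation from \cref{assumptionsp} to force its endpoints to stand to $(\ST(s),\ST(t))$. The only difference is that you spell out the last pinning-down step (discreteness of $T_e$ plus transfer of a standard separating distance) more explicitly than the paper, which compresses it into ``by \cref{assumptionsp}, $e$ is only contained in finitely many paths; by the transfer principle, it must be contained in $\gamma_{\ST(s)\ST(t)}$.''
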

\begin{proof} 
By \cref{hyperedge}, we have $v\in \NSE{E}\setminus \NSE{\Diagonal}$.
Note that $e\in \goodset\subset \biggoodset$ and $e\not\in \Diagonal$. 
By \cref{assumptiondp}, we know that $f(\ST(i),\ST(j))>0$ so $e\in E\setminus \Diagonal$. 
Let $\NSE{\gamma}_{st}$ denote the nonstandard path that associates with $\Gamma_{st}$. 
Then $\NSE{\gamma}_{st}$ contains $v'=(x,y)$ where $x\in B(i)$ and $y\in B(j)$. 
As $\NSE{\metric}(v', \NSE{e})\approx 0$, by \cref{assumptionrp}, we have $\NSE{m}(v')\approx \NSE{m}(\NSE{e})=m(e)$ and there exists a nonstandard path $\NSE{\gamma}'$ such that $\NSE{\metric}(\NSE{\gamma}_{st},\NSE{\gamma}')\approx 0$ and $\NSE{e}\in \NSE{\gamma}'$. 
By \cref{assumptionsp}, $e$ is only contained in finitely many paths. 
By the transfer principle, it must be contained in $\gamma_{\ST(s)\ST(t)}$. 
\end{proof} 

Before we present our main result, we introduce one more assumption which asserts that edges with small length can only be contained in paths with close starting and ending points. For $e=(a, b)\in E$, the length of $e$, which we denote by $\|e\|$, is defined to be $ \| e \| = d(a,b)$. 

\begin{assumption}\label{assumptionnstpa}
There exists a function $L: \Reals \to \Reals$ such that 
\begin{itemize}
\item $L(x)>0$ for all $x>0$,
\item $\NSE{L}(x)\approx 0$ if and only if $x\approx 0$,
\item For every $\gamma_{ab}\in \allpath$ and every $e\in \gamma_{ab}$, we have $\|e\|\geq L(d(a,b))$.
\end{itemize}
\end{assumption}

Note that, for every continuous function $L$ such that $L(0)=0$ and $L(x)>0$ for $x>0$, we know that $\NSE{L}(x)\approx 0$ if and only if $x\approx 0$.

\begin{lemma}\label{nshortpath}
Suppose \cref{assumptionnstpa} holds. 
Let $v=(s,t)$ be an element of $\hyper{E}$ such that $s\approx t$.
If $\Gamma_{ij}\in \HPathSpace$ contains $v$, then $i\approx j$. 
\end{lemma}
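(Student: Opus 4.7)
The plan is to use the transfer of the third bullet in \cref{assumptionnstpa} together with the observation that the edge $v=(s,t)$ must ``come from'' a short nonstandard edge inside the nonstandard path $\NSE{\gamma}_{ij}$ that generated $\Gamma_{ij}$.

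First, I would unpack what it means for $v=(s,t)$ to lie in $\Gamma_{ij}$. By the construction preceding \cref{hypathdefined}, the hyperfinite path $\Gamma_{ij}$ arises from a nonstandard path $\NSE{\gamma}_{ij}=(i,i_{1}),(i_{1},i_{2}),\dotsc,(i_{k-1},j)\in \NSE{\allpath}$ by replacing each interior point $i_{n}$ with its representative $s_{i_{n}}\in S_{X}$ satisfying $i_{n}\in B(s_{i_{n}})$. Hence there is some edge $e=(x,y)\in \NSE{\gamma}_{ij}$ whose endpoints satisfy $x\in B(s)$ and $y\in B(t)$ (with the convention $x=i$ if $s=i$ is the starting vertex, and $y=j$ if $t=j$ is the ending vertex). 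Since each $B(u)$ has infinitesimal diameter and since $s\approx t$ by hypothesis, the triangle inequality yields $\NSE{d}(x,y)\approx 0$, i.e. $\NSE{\|e\|}\approx 0$.

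Next, I would apply the transfer of the third bullet of \cref{assumptionnstpa} to $\NSE{\gamma}_{ij}\in \NSE{\allpath}$: for every edge $e\in \NSE{\gamma}_{ij}$, we have $\NSE{\|e\|}\geq \NSE{L}(\NSE{d}(i,j))$. In particular, for the specific short edge $e$ found above,
\[
\NSE{L}(\NSE{d}(i,j))\leq \NSE{\|e\|}\approx 0,
\]
so $\NSE{L}(\NSE{d}(i,j))\approx 0$. Invoking the second bullet of \cref{assumptionnstpa} ($\NSE{L}(x)\approx 0 \iff x\approx 0$) then forces $\NSE{d}(i,j)\approx 0$, which is exactly $i\approx j$.

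I do not anticipate any genuine obstacle here; the argument is essentially a direct transfer of the ``edges of a path with far-apart endpoints cannot be too short'' property, combined with the observation that the discretization step used to define $\Gamma_{ij}$ can only shift endpoints by an infinitesimal amount (since each $B(u)$ has infinitesimal diameter). The only small bookkeeping issue is handling the degenerate case $(i,j)\in \alt{\hyper E}\cap \NSE{\Diagonal}$: there $\Gamma_{ij}=(i,i)$ so $i=j$ trivially, and there is nothing to prove. Otherwise the case analysis (whether $v$ is the first, last, or an interior edge of $\Gamma_{ij}$) is uniform, since in every case we produce a nonstandard edge of $\NSE{\gamma}_{ij}$ with infinitesimal length.
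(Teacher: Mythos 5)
Your proof is correct and follows essentially the same route as the paper's: locate the nonstandard edge $e = (x,y) \in \NSE{\gamma}_{ij}$ with $x \in B(s)$, $y \in B(t)$, observe that $\|e\| \approx 0$, transfer the third bullet of \cref{assumptionnstpa} to get $\|e\| \geq \NSE{L}(\NSE{d}(i,j))$, and conclude via the second bullet. You spell out two small points the paper leaves implicit --- the triangle-inequality justification that $\|e\| \approx 0$ (using infinitesimal diameters of $B(s)$, $B(t)$ and $s \approx t$) and the trivial diagonal case --- but these are refinements of the same argument, not a different one.
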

\begin{proof}
Let $\Gamma_{ij}\in \HPathSpace$ contain $v$. 
Then the nonstandard path $\NSE{\gamma}_{ij}$ contains $e=(x, y)\in \NSE{E}$ for $x\in B(s)$ and $y\in B(t)$. 
By \cref{assumptionnstpa} and the transfer principle, we have $\|e\|\geq \NSE{L}(|i-j|)$.
As $\|e\|\approx 0$, we know that $\NSE{L}(|i-j|)\approx 0$. 
By \cref{assumptionnstpa} again, we can conclude that $i\approx j$. 
\end{proof}

We are now at the place to establish the main result of this section.

\begin{theorem}\label{mainthm}
Let $\{g(x,1,\cdot)\}_{x\in X}$ and $\{\alt{g}(x,1,\cdot)\}_{x\in X}$ be two reversible transition kernels satisfying \cref{assumptionwsf} with stationary distributions $\pi$ and $\alt{\pi}$, respectively. 
Let $k(\cdot), \alt{k}(\cdot)$ denote the density functions for $\pi, \alt{\pi}$, respectively and let $f(x,\cdot), \alt{f}(x,\cdot)$ denote the density functions for $g(x,1,\cdot), \alt{g}(x,1,\cdot)$, respectively.
Suppose \cref{assumptiondp}, \cref{assumptiondcm} hold for some $\biggoodset$,  \cref{assumptiongp} holds for some $\goodset$ and $\epsilon_0>0$,  \cref{assumptionsp} holds for some $\epsilon>0$, \cref{assumptionub} holds for some $R>0$, \cref{assumptionpl} holds for some $M>0$, \cref{assumptionrp} holds for some continuous function $m: E\to \Reals$, and \cref{assumptionnstpa} holds for some $L: \Reals\to \Reals$. Let
\[\label{defepath}
B=\sup_{e\in E} \left( \frac{2(\ceiling{m(e)}+1)}{Q(e)}\sum_{(x,y): e\in \gamma_{xy}}\alt{Q}(x,y)(|\gamma_{xy}|+M) \right). 
\] 
Then, for every continuous function $h: X\to \Reals$, we have
\[
\Diric{\alt{g}}{h}{h}\leq B\Diric{g}{h}{h}.
\]
\end{theorem}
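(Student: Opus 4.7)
The plan is to transfer the finite-state proof of \cref{finitecomparison} to the hyperfinite Markov kernels $H,\alt{H}$, and then relate the resulting hyperfinite inequality back to the standard Dirichlet forms via \cref{hydstd}. First, I would apply \cref{hydstd} simultaneously to both $g$ and $\alt{g}$ (by enlarging the family of formulas in its proof to include the Dirichlet-form conditions for both kernels at once), obtaining a hyperfinite representation for which $\Diric{g}{h}{h}\approx \hyDiric{H}{F}{F}$ and $\Diric{\alt{g}}{h}{h}\approx \hyDiric{\alt{H}}{F}{F}$, where $F(s)=\NSE{h}(s)$.

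Next, transferring the Cauchy--Schwarz and swap-of-summation steps of the standard proof of \cref{finitecomparison} to $H,\alt{H}$ and the hyperfinite paths in $\HPathSpace$ yields
\[
\hyDiric{\alt{H}}{F}{F}\leq \frac{1}{2}\sum_{v=(s,t)\in \hyper{E}}[F(s)-F(t)]^{2}\sum_{(i,j):\, v\in \Gamma_{ij}}|\Gamma_{ij}|\,\alt{\hyper{Q}}(i,j).
\]
I would split the outer sum into (i) edges $v$ with $\ST(v)\notin \Diagonal$ and (ii) edges $v=(s,t)$ with $s\approx t$. For part (i), \cref{stpathcm} matches each $\Gamma_{ij}$ containing $v$ with a standard path $\gamma_{\ST(i)\ST(j)}\ni \ST(v)\in E\setminus\Diagonal$; \cref{hypathfinite} bounds the number of such $\Gamma_{ij}$ by $2(\ceiling{m(\ST(v))}+1)$ times the number of standard paths through $\ST(v)$; \cref{lengthclose} gives $|\Gamma_{ij}|\leq |\gamma_{\ST(i)\ST(j)}|+M$; and \cref{hypradonq} replaces $\alt{\hyper{Q}}(i,j)/\hyper{Q}(v)$ by $\alt{Q}(\ST(i),\ST(j))/Q(\ST(v))$ up to an infinitesimal (using that, by \cref{hypathdefined}, all edges appearing in $\HPathSpace$ lie in $\NSE{\goodset}\subset \NSE{\biggoodset}$). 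These combine to show that the inner factor is at most $\hyper{Q}(v)\cdot B$ up to an infinitesimal, so part (i) contributes at most $B\cdot \hyDiric{H}{F}{F}$ up to an infinitesimal.

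For part (ii), swapping the order of summation and applying \cref{nshortpath} shows that only pairs $(i,j)$ with $i\approx j$ can contribute. Since $h$ is uniformly continuous on the compact space $X$, for each standard $\epsilon>0$ we have $[F(s)-F(t)]^{2}\leq \epsilon^{2}$ whenever $s\approx t$; combined with $|\Gamma_{ij}|\leq R$ from \cref{assumptionub} and $\sum_{(i,j)}\alt{\hyper{Q}}(i,j)=1$, the total contribution of part (ii) is at most $R^{2}\epsilon^{2}$, which is infinitesimal by arbitrariness of $\epsilon$. Combining (i) and (ii) gives $\hyDiric{\alt{H}}{F}{F}\lessapprox B\cdot \hyDiric{H}{F}{F}$; taking standard parts and using the approximations from step one yields $\Diric{\alt{g}}{h}{h}\leq B\cdot \Diric{g}{h}{h}$. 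The main obstacle is precisely part (ii): a direct transfer of \cref{finitecomparison} produces the hyperfinite congestion $\hyper{B}$, which can be infinite due to infinitesimally short edges through which many hyperfinite paths are routed. The splitting above, together with uniform continuity of $h$, is what absorbs those problematic edges into a negligible term, leaving only the standard congestion $B$ in the final bound.
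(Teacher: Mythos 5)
Your proof is correct and, broadly speaking, follows the same high-level plan as the paper (transfer the finite comparison argument to the hyperfinite kernels $H,\alt H$, then relate back via \cref{hydstd}). However, you handle the problematic edges differently, and your route is arguably the more robust one. The paper proves $\hyDiric{\alt H}{F}{F}\le \hycongest\,\hyDiric{H}{F}{F}$ by transfer, where $\hycongest$ is the hyperfinite congestion ratio maximized over \emph{all} $v\in\hyper E$, and then tries to show $\hycongest\lessapprox B$ by a pointwise case analysis. For edges $v=(i,j)$ with $i\approx j$ that do meet $\NSE\goodset$, the paper invokes \cref{nshortpath} to assert the hyperfinite paths through $v$ have ``infinitesimal length'' and hence contribute $0$; but \cref{nshortpath} only yields that the endpoints of such paths are infinitely close, not that the edge-count $|\Gamma_{ij}|$ is infinitesimal (it can easily be $1$, $2$ or $3$), and the corresponding $\alt{\hyper Q}/\hyper Q$ ratio is generally finite rather than zero. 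Your approach sidesteps this entirely: by not passing to the max over $v$ and instead retaining the $[F(s)-F(t)]^{2}$ weight from the Cauchy--Schwarz step, you can absorb the $s\approx t$ edges using the uniform continuity of $h$ on the compact $X$ together with \cref{assumptionub} and $\sum_{(i,j)}\alt{\hyper Q}(i,j)=1$, leaving a contribution of order $R^{2}\epsilon^{2}$ for every standard $\epsilon>0$ and hence infinitesimal. For the edges with $\ST(v)\notin\Diagonal$, you and the paper use essentially the same chain of lemmas (\cref{hypathdefined}, \cref{stpathcm}, \cref{hypathfinite}, \cref{lengthclose}, \cref{hypradonq}) to get the per-edge ratio $\lessapprox B$, and since a hyperfinite max of quantities each $\lessapprox B$ is $\lessapprox B$, part (i) contributes at most $B\,\hyDiric{H}{F}{F}$ up to an infinitesimal. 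In short, what your decomposition buys is that the ``short edge'' regime is controlled by the test function $h$ rather than by an assertion about path lengths, which both closes a gap and is closer in spirit to what actually makes the finite-state Cauchy--Schwarz argument work.
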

\begin{proof}
Let $\{H_{i}(\cdot)\}_{i\in S_X}$ and $\{\alt{H}_{i}(\cdot)\}_{i\in S_X}$ be two hyperfinite transition kernels associated with $\{g(x,1,\cdot)\}_{x\in X}$ and $\{\alt{g}(x,1,\cdot)\}_{x\in X}$ as defined in \cref{closereverse}. 
Let $\Pi(\cdot)$ and $\alt{\Pi}(\cdot)$ be *stationary distributions for $\{H_{i}(\cdot)\}_{i\in S_X}$ and $\{\alt{H}_{i}(\cdot)\}_{i\in S_X}$, respectively. 
By \cref{closereverse}, we know that $\{H_{i}(\cdot)\}_{i\in S_X}$ is *reversible with respect to $\Pi$ and $\{\alt{H}_{i}(\cdot)\}_{i\in S_X}$ is *reversible with respect to $\alt{\Pi}$. 
Let
\[
\hycongest=\max_{v\in \hyper{E}} \left(\frac{1}{\hyper{Q}(v)}\sum_{(s,t): v\in \Gamma_{st}}\alt{\hyper{Q}}(s,t)|\Gamma_{st}|\right)
\]
be the hyperfinite congestion ratio. 
By the transfer principle, for every internal function $F: S_X\to \NSE{\Reals}$, we have 
\[
\hyDiric{\alt{H}}{F}{F}\leq \hycongest \hyDiric{H}{F}{F}. 
\]

Pick a continuous function $h_0: X\to \Reals$. Define an internal function $F_0: S_X\to \NSE{\Reals}$ by letting $F_0(s)=\NSE{h_0}(s)$ for every $s\in S_X$. By \cref{hydstd}, we have $\Diric{g}{h_0}{h_0}\approx \hyDiric{H}{F_0}{F_0}$ and $\Diric{\alt{g}}{h_0}{h_0}\approx \hyDiric{\alt{H}}{F_0}{F_0}$. Thus, to finish the proof, it is sufficient to show that $\hycongest\lessapprox B$. 

Pick $v=(i,j)\in \hyper{E}$. 
Suppose $(B(i)\times B(j))\cap \NSE{\goodset}=\emptyset$. 
Let $\Gamma_{ab}\in \HPathSpace$.  
As $\NSE{\alt{H}}_{a}(\{b\})>0$, we conclude that $(a,b)\in \NSE{\alt{E}_{\epsilon_0}}$. 
By \cref{assumptiongp} and the transfer principle, 
we know that the nonstandard path $\NSE{\gamma}_{ab}$ is formed by edges from $\NSE{\goodset}$. 
As $(B(i)\times B(j))\cap \NSE{C}=\emptyset$, we know that $v\not\in \Gamma_{ab}$.
Thus, we can conclude that 
\[
\frac{1}{\hyper{Q}(v)}\sum_{(s,t): v\in \Gamma_{st}}\alt{\hyper{Q}}(s,t)|\Gamma_{st}|=0.
\]

We now suppose that $(B(i)\times B(j))\cap \NSE{C}\neq \emptyset$ but but $i\approx j$. 
By \cref{assumptionsp}, $v$ is contained in finitely many nonstandard paths. 
By \cref{hypathfinite}, $v$ is contained in finitely many hyperfinite paths. 
By \cref{nshortpath}, any hyperfinite path contains $v$ has infinitesimal length. 
Hence, we have
\[
\frac{1}{\hyper{Q}(v)}\sum_{(s,t): v\in \Gamma_{st}}\alt{\hyper{Q}}(s,t)|\Gamma_{st}|=0
\]
in this case. 
 
We now suppose that $v\in \ST^{-1}(\goodset)\subset \ST^{-1}(\biggoodset)\subset \NSE{\biggoodset}$. 
This immediately implies that $i\not\approx j$. 
By \cref{assumptiondp}, we know that $\ST(\NSE{f}(i,j))>0$ and $\ST(\NSE{k}(i))>0$. 
Let $\Gamma_{ab}$ be a hyperfinite path that contains $v$. 
Suppose $\NSE{\alt{f}}(a,b)\approx 0$. 
Then, by \cref{hypradonq}, we know that $\frac{\alt{\hyper{Q}}(a,b)}{\hyper{Q}(v)}\approx 0$. 
By \cref{assumptionsp} and \cref{hypathfinite}, $v$ is contained in finitely many hyperfinite paths. 
Thus, we have 
\[\label{eqrestrict}
\frac{1}{\hyper{Q}(v)}\sum_{(s,t): v\in \Gamma_{st}}\alt{\hyper{Q}}(s,t)|\Gamma_{st}|\approx \frac{1}{\hyper{Q}(v)}\sum_{v\in \Gamma_{st}\in \mathcal{T}_{v}}\alt{\hyper{Q}}(s,t)|\Gamma_{st}|
\]
where $\mathcal{T}_{v}=\{\Gamma_{st}: \ST(\NSE{\alt{f}}(s,t))>0\wedge v\in \Gamma_{st}\}$. 

By \cref{stpathcm}, $e=(\ST(i),\ST(j))$ is an element of $E\setminus \Diagonal$. It follows from \cref{assumptionsp} that $e$ is contained in finitely many elements of $\PathSpace$. Let 
\[
T_{e}=\{\gamma\in \PathSpace: e\in \gamma\}=\{\gamma_{x_{1}y_{1}},\gamma_{x_{2}y_{2}},\dotsc,\gamma_{x_{n}y_{n}}\}
\]
for some $n\in \Nats$.
By \cref{stpathcm} again, we have $\mathcal{T}_{v}=\bigcup_{i=1}^{n}A_i$ where $A_i=\mathcal{T}_{v}\cap \{\Gamma\in \HPathSpace: \NSE{\metric}(\Gamma, \NSE{\gamma}_{x_{i}y_{i}})\approx 0\}$. By \cref{hypathfinite}, for every $1\leq i\leq n$, we have $|A_i|\leq \ceiling{2m(e)+1}$. 
By \cref{hypradonq}, \cref{assumptiondp} and \cref{assumptiondcm} , we have
\[
\frac{1}{\hyper{Q}(v)}\sum_{v\in \Gamma_{st}\in \mathcal{T}_{v}}\alt{\hyper{Q}}(s,t)|\Gamma_{st}|\approx \sum_{v\in \Gamma_{st}\in \mathcal{T}_{v}}\frac{\alt{Q}(\ST(s),\ST(t))}{Q(e)}|\Gamma_{st}|.
\]
By \cref{lengthclose}, we have $|\Gamma_{st}|\leq |\gamma_{\ST(s)\ST(t)}|+M$. Thus, combining with \cref{eqrestrict}, we have
\[
\sum_{(s,t): v\in \Gamma_{st}}\frac{\alt{Q}(\ST(s),\ST(t))}{Q(e)}|\Gamma_{st}|\lessapprox 
\frac{2(\ceiling{m(e)}+1)}{Q(e)}\sum_{(x,y): e\in \gamma_{xy}}\alt{Q}(x,y)(|\gamma_{xy}|+M).
\]
Hence, we can conclude that $\hycongest\lessapprox B$, proving the desired result. 
\end{proof}

\subsection{Example: Finishing the Barbell Graph} \label{SecBarbellEx2}

We continue from Section \ref{SecBarbellEx1}, using the same notation and  paths. We will compare the kernel $Q_{n}'$ to the kernel that takes i.i.d. samples from the stationary measure of $Q_{n}'$. Let $d$ be as in Section \ref{SecHypCon}.


We now quickly check the assumptions of \cref{mainthm}: 

\begin{itemize}
\item Both transition kernels clearly satisfy \cref{assumptiondsf} (indeed, in the statement of the condition one can take $\delta = 0.1$ for \textit{all} $\epsilon \geq 0$), and thus they also satisfy the weaker condition \cref{assumptionwsf}.
\item \cref{assumptiondp} and \cref{assumptiondcm} hold with $\biggoodset = X^{2}$.
\item  \cref{assumptiongp} holds with $\goodset$ consisting of all edges of length at most 1, and $\epsilon_{0} = 1$. 
\item \cref{assumptionsp} holds with any $0 < \epsilon < 1$. 
\item \cref{assumptionub} holds with $R=3$.
\item  \cref{assumptionpl} holds trivially with $M=R-1=2$.
\item \cref{assumptionrp} holds for any $0 < \epsilon < 1$ with  $m(e) \equiv (n+1)$.
\item \cref{assumptionnstpa} holds with $L(x) \equiv x$.
\end{itemize}

Applying these bounds to the formula $B$ appearing in \cref{mainthm}, we find $B = \Theta(n^{2})$; this gives the correct dependence on $n$ for the relaxation time.

\printbibliography

@article{Beurling58,
  title={Espaces de Dirichlet. I. Le cas \'{e}l\'{e}mentaire},
  author={Beurling, A. and Deny, J.},
  journal={Acta Mathematica},
  volume={99},
  number={1},
  pages={203--224},
  year={1958},
}

@book{davies_1989, place={Cambridge}, series={Cambridge Tracts in Mathematics}, title={Heat Kernels and Spectral Theory}, DOI={10.1017/CBO9780511566158}, publisher={Cambridge University Press}, author={Davies, E. B.}, year={1989}, collection={Cambridge Tracts in Mathematics}}

@book{ma2012introduction,
  title={Introduction to the theory of (non-symmetric) {D}irichlet forms},
  author={Ma, Zhi-Ming and R{\"o}ckner, Michael},
  year={2012},
  publisher={Springer Science \& Business Media}
}

@article{fukushima1996dirichlet,
  title={Dirichlet forms and symmetric {M}arkov processes},
  author={Fukushima, M},
  journal={Bull. Amer. Math. Soc},
  volume={33},
  pages={87--92},
  year={1996}
}

@book {LPW09,
    AUTHOR = {Levin, David and Peres, Yuval and Wilmer, Elizabeth},
     TITLE = {{M}arkov Chains and Mixing Times},
 PUBLISHER = {American Mathematical Society},
   ADDRESS = {Providence, Rhode Island},
      YEAR = {2009},
     PAGES = {xi+371},
      ISBN = {978-0-8218-4739-8},
}

@article{schmuland1999dirichlet,
  title={Dirichlet forms: some infinite-dimensional examples},
  author={Schmuland, Byron},
  journal={The Canadian Journal of Statistics/La Revue Canadienne de Statistique},
  pages={683--700},
  year={1999},
  publisher={JSTOR}
}

@article{muller2016coverage,
  title={Coverage Inducing Priors in Nonstandard Inference Problems},
  author={M{\"u}ller, Ulrich K and Norets, Andriy},
  journal={Journal of the American Statistical Association},
  volume={111},
  number={515},
  pages={1233--1241},
  year={2016},
  publisher={Taylor \& Francis}
}

@article{diaconis1993comparison,
  title={Comparison theorems for reversible {M}arkov chains},
  author={Diaconis, Persi and Saloff-Coste, Laurent},
  journal={The Annals of Applied Probability},
  pages={696--730},
  year={1993},
  publisher={JSTOR}
}

@book{albeverio2003lectures,
  title={Lectures on probability theory and statistics: ecole d'et{\'e} de probabilit{\'e}s de Saint-Flour XXX-2000},
  author={Albeverio, Sergio and Schachermayer, Walter},
  year={2003},
  publisher={Springer Science \& Business Media}
}

@book{doob2012classical,
  title={Classical potential theory and its probabilistic counterpart: Advanced problems},
  author={Doob, Joseph L},
  volume={262},
  year={2012},
  publisher={Springer Science \& Business Media}
}

@article{kusuoka1989dirichlet,
  title={Dirichlet forms on fractals and products of random matrices},
  author={Kusuoka, Shigeo},
  journal={Publications of the Research Institute for Mathematical Sciences},
  volume={25},
  number={4},
  pages={659--680},
  year={1989},
  publisher={Research Institute forMathematical Sciences}
}

@Article{anderson2019mixavg,
  author = {Anderson, Robert M. and Duanmu, Haosui and Smith, Aaron},
  title  = {Mixing Times and Average Mixing Times for General {M}arkov Processes},
  year   = {2019},
  note   = {Submitted},
}

@article {zimradon,
    AUTHOR = {Zimmer, G. Beate},
     TITLE = {A unifying {R}adon-{N}ikod\'{y}m theorem through nonstandard
              hulls},
   JOURNAL = {Illinois J. Math.},
  FJOURNAL = {Illinois Journal of Mathematics},
    VOLUME = {49},
      YEAR = {2005},
    NUMBER = {3},
     PAGES = {873--883},
      ISSN = {0019-2082},
   MRCLASS = {46G10 (28B05 28E05)},
  MRNUMBER = {2210264},
MRREVIEWER = {Grigore Ciurea},
       URL = {http://projecteuclid.org.myaccess.library.utoronto.ca/euclid.ijm/1258138224},
}

@book {markovmix,
    AUTHOR = {Levin, David A. and Peres, Yuval and Wilmer, Elizabeth L.},
     TITLE = {Markov chains and mixing times},
      NOTE = {With a chapter by James G. Propp and David B. Wilson},
 PUBLISHER = {American Mathematical Society, Providence, RI},
      YEAR = {2009},
     PAGES = {xviii+371},
      ISBN = {978-0-8218-4739-8},
   MRCLASS = {60J10 (60-01 60J05 60K35 60K37 68U20 68W20)},
  MRNUMBER = {2466937},
MRREVIEWER = {Olle H\"aggstr\"om},
}

@article{yuen2000applications,
  title={Applications of geometric bounds to the convergence rate of {M}arkov chains on $\mathbb{R}^{n}$},
  author={Yuen, Wai Kong},
  journal={Stochastic processes and their applications},
  volume={87},
  number={1},
  pages={1--23},
  year={2000},
  publisher={Elsevier}
}

@article{yuen2002generalization,
  title={Generalization of discrete-time geometric bounds to convergence rate of {M}arkov processes on $\mathbb{R}^{n}$},
  author={Yuen, Wai Kong},
  journal={Stochastic models},
  volume={18},
  number={2},
  pages={301--331},
  year={2002},
  publisher={Taylor \& Francis}
}

@article {finitemixhit,
    AUTHOR = {Peres, Yuval and Sousi, Perla},
     TITLE = {Mixing times are hitting times of large sets},
   JOURNAL = {J. Theoret. Probab.},
  FJOURNAL = {J. Theo. Prob.},
    VOLUME = {28},
      YEAR = {2015},
    NUMBER = {2},
     PAGES = {488--519},
      ISSN = {0894-9840},
   MRCLASS = {60J10},
  MRNUMBER = {3370663},
MRREVIEWER = {Ramon van Handel},
       DOI = {10.1007/s10959-013-0497-9},
       URL = {https://doi-org.myaccess.library.utoronto.ca/10.1007/s10959-013-0497-9},
}

@article {andersonisrael,
    AUTHOR = {Anderson, Robert M.},
     TITLE = {A non-standard representation for {B}rownian motion and {I}t\^{o}
              integration},
   JOURNAL = {Israel J. Math.},
  FJOURNAL = {Israel Journal of Mathematics},
    VOLUME = {25},
      YEAR = {1976},
    NUMBER = {1-2},
     PAGES = {15--46},
      ISSN = {0021-2172},
   MRCLASS = {60G05 (02H25 60H05)},
  MRNUMBER = {0464380},
MRREVIEWER = {P. A. Loeb},
       DOI = {10.1007/BF02756559},
       URL = {https://doi-org.myaccess.library.utoronto.ca/10.1007/BF02756559},
}

@unpublished{anderson2019gibbs,
  title={Mixing Times and Hitting Times for Gibbs Samplers and other Non-Feller Processes},
  author={Anderson, Robert M. and Duanmu, Haosui and Smith, Aaron },
  year={2019},
  Note={submitted}
}

@unpublished{anderson2018mixhit,
  title={Mixing Times and Hitting Times for General {M}arkov Processes},
  author={Anderson, Robert M. and Duanmu, Haosui and Smith, Aaron },
  year={2018},
  Note={submitted}
}

@unpublished{nsbayes,
         AUTHOR= {Duanmu,Haosui and Roy, Daniel M.},
         TITLE = {On extended admissible procedures and their nonstandard Bayes risk},
         YEAR = {2018},
         NOTE = {The Annals of Statistics, under revision}
         }

@unpublished{nscredible,
         AUTHOR= {Duanmu,Haosui and Roy, Daniel M. and Smith, Aaron},
         TITLE = {Existence of matching priors on compact spaces yielding confidence intervals},
         YEAR = {2017},
         NOTE = {In preparation}
         }

@unpublished{Markovpaper,
         AUTHOR= {Duanmu,Haosui and Rosenthal,J.S. and Weiss,William},
         TITLE = {Ergodicity of Markov processes via non-standard analysis},
         YEAR = {2018},
         NOTE = {Memoirs of the American Mathematical Society, to appear}
         }

@book {NAW,
     TITLE = {Nonstandard analysis for the working mathematician},
    SERIES = {Mathematics and its Applications},
    VOLUME = {510},
    EDITOR = {Wolff, Manfred and Loeb, Peter A.},
 PUBLISHER = {Kluwer Academic Publishers, Dordrecht},
      YEAR = {2000},
     PAGES = {xiv+311},
      HIDEISBN = {0-7923-6340-X},
   MRCLASS = {03-06 (03-02 03H05 03H10)},
  MRNUMBER = {1790871},
       DOI = {10.1007/978-94-011-4168-0},
       URL = {http://dx.doi.org.myaccess.library.utoronto.ca/10.1007/978-94-011-4168-0},
}

@book {NDV,
     TITLE = {Developments in nonstandard mathematics},
    SERIES = {Pitman Research Notes in Mathematics Series},
    VOLUME = {336},
    EDITOR = {Cutland, Nigel J. and Neves, V{\'{\i}}tor and Oliveira, Franco
              and Sousa-Pinto, Jos{\'e}},
      NOTE = {Papers from the International Colloquium (CIMNS94) held in
              memory of Abraham Robinson at the University of Aveiro,
              Aveiro, July 18--22, 1994},
 PUBLISHER = {Longman, Harlow},
      YEAR = {1995},
     PAGES = {x+260},
      HIDEISBN = {0-582-27970-4},
   MRCLASS = {00B30 (00B25 03-06 03H05 28-06 54-06)},
  MRNUMBER = {1394201},
}

@article {Keisler87,
    AUTHOR = {Keisler, H. Jerome},
     TITLE = {An infinitesimal approach to stochastic analysis},
   JOURNAL = {Mem. Amer. Math. Soc.},
  FJOURNAL = {Memoirs of the American Mathematical Society},
    VOLUME = {48},
      YEAR = {1984},
    NUMBER = {297},
     PAGES = {x+184},
      HIDEISSN = {0065-9266},
     CODEN = {MAMCAU},
   MRCLASS = {60H10 (03H05)},
  MRNUMBER = {732752},
MRREVIEWER = {Tom L. Lindstr{\"o}m},
       DOI = {10.1090/memo/0297},
       URL = {http://dx.doi.org.myaccess.library.utoronto.ca/10.1090/memo/0297},
}

@proceedings {NSAA97,
     TITLE = {Nonstandard analysis},
    SERIES = {NATO Advanced Science Institutes Series C: Mathematical and
              Physical Sciences},
    VOLUME = {493},
 BOOKTITLE = {Proceedings of the {NATO} {A}dvanced {S}tudy {I}nstitute on
              {N}onstandard {A}nalysis and its {A}pplications held in
              {E}dinburgh, {J}une 30--{J}uly 13, 1996},
    EDITOR = {Arkeryd, Leif O. and Cutland, Nigel J. and Henson, C. Ward},
      NOTE = {Theory and applications},
 PUBLISHER = {Kluwer Academic Publishers Group, Dordrecht},
      YEAR = {1997},
     PAGES = {xiv+366},
      HIDEISBN = {0-7923-4586-X},
   MRCLASS = {03-06 (00B25 03H05)},
  MRNUMBER = {1603227},
       DOI = {10.1007/978-94-011-5544-1},
       URL = {http://dx.doi.org.myaccess.library.utoronto.ca/10.1007/978-94-011-5544-1},
}

%


\end{document}